\numberwithin{equation}{section}
\theoremstyle{plain}
\newtheorem{theorem}{Theorem}[section]
\newtheorem{lemma}{Lemma}[section]
\newtheorem{corollary}{Corollary}[section]
\newtheorem{remark}{Remark}[section]
\newcommand{\be}{\begin{equation}}
\newcommand{\ee}{\end{equation}}
\newcommand{\bea}{\begin{eqnarray}}
\newcommand{\eea}{\end{eqnarray}}
\newcommand{\eeas}{\end{eqnarray*}}
\newcommand{\beas}{\begin{eqnarray*}}
\numberwithin{equation}{section}
\newcommand{\trace}{\mathop{\operator@font Trace}}
\newcommand{\vspan}{\mathop{\operator@font Span}}
\newcommand{\Int}{\mathop{\operator@font Int}}
\newcommand{\grad}{\mathop{\operator@font grad}}
\newcommand{\diver}{\mathop{\operator@font div}}
\newcommand{\Ad}{\mathop{\operator@font Ad}}
\newcommand{\id}{\mathop{\operator@font id}}
\begin{document}

\title{Hopf hypersurfaces in complex Grassmannians of rank two
}

\author{Ruenn-Huah Lee and Tee-How {Loo}\\
Department of Financial Mathematics and Statistics,\\
Sunway University Business School,\\
No. 2, Jalan Universiti, Bandar Sunway,\\
47500 Selangor, Malaysia.\\
Institute of Mathematical Sciences, University of Malaya \\
50603 Kuala Lumpur, Malaysia.	\\ 
\ttfamily{rhlee063@hotmail.com }, 
\ttfamily{looth@um.edu.my}}

\date{}
\maketitle

\abstract{
In this paper, we study real hypersurfaces in complex Grassmannians of rank two.
First, the nonexistence of mixed foliate real hypersurfaces is proven. With this result, 
we show that for Hopf hypersurfaces in complex Grassmannians of rank two, the Reeb principal curvature is constant along integral curves of the Reeb vector field. As a result the classification of contact real hypersurfaces is obtained.
We also introduce the notion of  $q$-umbilical real hypersurfaces in complex Grassmannians of rank two and obtain a classification of such real hypersurfaces.
}

\medskip\noindent
\emph{2010 Mathematics Subject Classification.}
Primary  53C25, 53C15; Secondary 53B20.

\medskip\noindent
\emph{Key words and phrases.}
complex Grassmannians of rank two. Hopf hypersurfaces. Mixed foliate real hypersurfaces. $q$-umbilical real hypersurfaces.  


\section{Introduction}

The complex Grassmannians of rank two (both the compact type: $SU_{m+2}/S(U_2U_m)$ and the noncompact type: $SU_{2,m}/S(U_2U_m)$)
of complex dimension $2m$ are Riemannian symmetric spaces equipped with a K\"{a}hler structure $J$ and a quaternionic K\"{a}hler structure $\mathfrak J$.  Another interesting characteristic is the presence of the real structure $JJ_a$, $a\in\{1,2,3\}$,  on its tangent spaces, arisen from the interaction between $J$ and $\mathfrak J$, where $\{J_1,J_2,J_3\}$ is a canonical local basis  for $\mathfrak J$.

These three geometric structures  significantly impose restrictions on the geometry of a real hypersurface $M$ in complex Grassmannians of rank two.  As an immediate consequence of the Codazzi equation of such submanifolds, the totally umbilicity are too strong to be satisfied by real hypersurfaces in complex Grassmannians of rank two.

Apart from the submanifold structure, three additional structures are  then induced on $M$ by these geometric structures of the ambient spaces: an almost contact structure $(\phi,\xi,\eta)$ on $M$ from the K\"ahler structure $J$; 
an almost contact 3-structure $(\phi_a,\xi_a,\eta_a)$, $a\in\{1,2,3\}$  from the quaternionic K\"ahler structure $\mathfrak J$; and 
local endomorphisms $\theta_a:=\phi_a\phi-\xi_a\otimes\eta$ on $TM$, $a\in\{1,2,3\}$, from the interaction bewteen $J$ and $\mathfrak J$. 

The formulations of the induced almost contact structure and almost contact $3$-structure of real hypersurfaces $M$ were well established and have been widely used in studying the geometry of real hypersurfaces in the literature.
In contrast,  less is known about  the characteristics of the local endomorphism $\theta_a$. 
In this paper, we establish a complete algebraic formulation for $\theta_a$. With this notion, we introduce a concept, which is
so-called $q$-umbilicity.
To some extent, $q$-umbilical real hypersurfaces are those in complex Grassmannians of rank two with the richest  geometric characteristics due to the nonexistence of totally umbilical real hypersurfaces. 

A real hypersurface $M$ in complex Grassmannians of rank two is said to be \emph{$q$-umbilical} if  the shape operator $A$ of $M$ satisfies
\begin{align*}
A= f_1\mathbb I+f_2\theta +f_3\sum^3_{a=1}\xi_a\otimes\eta_a
\end{align*}
where $f_1$, $f_2$, $f_3$  are functions on $M$ and $\theta:=\sum^3_{a=1}\eta_a(\xi)\theta_a$.

The concept of $q$-umbilicity was formulated in such a way after having taken into account the restrictions on $M$  imposed by the three geometric structures of the ambient spaces. 
The absence of an  almost contact structure on $M$  under this condition is justified in the last section.

This paper is organized as follows:
After a quick revision on the geometric structures on complex Grassmannians of rank two in Sect. 2 and 
some well-known structural equations on its real hypersurface $M$ in the first half of Sect. 3, 
we establish some fundamental equations regarding the local endomorphism $\theta_a$ in the second half of Sect. 3.
We also introduce an endomorphism $\theta$ on $TM$ and obtain some of its properties in Sect. 3.
In Sect. 4, we focus on Hopf hypersurfaces in  complex Grassmannians of rank two. Some characteristics of the principal curvatures and the corresponding principal curvature spaces for Hopf hypersurfaces are studied. 
The nonexistence of mixed foliate real hypersurfaces in  complex Grassmannians of rank two is obtained in Sect. 5.
As an application of this result, 
we show that for Hopf hypersurfaces in complex Grassmannians of rank two, the Reeb principal curvature is constant along integral curves of the Reeb vector field. As a result, we can complete the classification of contact real hypersurfaces in $SU_{2,m}/S(U_2U_m)$ initiated in \cite{berndt-lee-suh}. In the last section, we classify  $q$-umbilical real hypersurfaces in complex Grassmannians of rank two.

\section{The complex Grassmannians of rank two}
We recall some geometric structures on complex Grassmannian of rank two in this section 
(see \cite{berndt}--\cite{berndt-suh2} for details).

The complex Grassmannian  $SU_{m+2}/S(U_2U_m)$ of all complex two-dimensional linear subspaces in $\mathbb C^{m+2}$ is a connected, simply connected irreducible Riemannian symmetric space of compact type and with rank two.
Let  $G=SU_{m+2}$ and $K=S(U_2U_m)$. Denote by $\mathfrak g$ and $\mathfrak k$ the corresponding Lie algebra.
Let $\mathfrak m=\mathfrak k^\perp$ with respect to the Killing form $B$ of $\mathfrak g$.
Then $\mathfrak m$ is $\Ad(K)$-invariant and we obtain a reductive decomposition $\mathfrak g=\mathfrak k\oplus\mathfrak m$.
The negative of $B$ defines a positive definite inner product on $\mathfrak m$. Denote by $g$ the corresponding $G$-invariant Riemannian metric on $SU_{m+2}/S(U_2U_m)$, 
we rescale  $g$ such that the maximal sectional curvature of $SU_{m+2}/S(U_2U_m)$ is $8c$, where $c>0$ is a constant.

The Lie algebra $\mathfrak k$ decomposes orthogonally into $\mathfrak k=\mathfrak {su}_2\oplus\mathfrak {su}_m\oplus\Re$, where 
$\Re$ is the center of $\mathfrak k$. Viewing $\mathfrak k$ as the holonomy algebra $SU_{m+2}/S(U_2U_m)$, the center $\Re$ induces a 
K\"ahler structure $J$, and $\mathfrak {su}_2$ induces a quaternionic K\"ahler structure $\mathfrak J$  on $SU_{m+2}/S(U_2U_m)$.

The complex Grassmannian  $SU_{2,m}/S(U_2U_m)$ of all positive definite complex two-dimensional linear subspaces in $\mathbb C^{m+2}_2$
 is a connected, simply connected irreducible Riemannian symmetric space of noncompact type and with rank two.
Let  $G=SU_{2,m}$ and $K=S(U_2U_m)$. Denote by $\mathfrak g$ and $\mathfrak k$ the corresponding Lie algebra.
Consider the Cartan involution $\sigma$ on $\mathfrak g$ given by 
$\sigma(A)=SAS^{-1}$, where $S=\left[\begin{matrix}-I_2 & 0 \\ 0 & I_m\end{matrix}\right]$.
Then $B_\sigma(X,Y):=- B(X,\sigma Y)$ is a positive definite $\Ad(K)$-invariant inner product on $\mathfrak g$, 
where $B$ is the Killing form of $G$. Let $(\mathfrak k,\mathfrak m)$ be a Cartan pair of $\mathfrak g$ associated to the Cartan involution $\sigma$.
Then the restriction of $B_\sigma$ to $\mathfrak m$ induces a Riemannian metric $g$ on $SU_{2,m}/S(U_2U_m)$, which is unique up to a scaling. 
We take a scaling factor $c<0$  such that the minimal sectional curvature of $SU_{2,m}/S(U_2U_m)$ is $8c$.

The Lie algebra $\mathfrak k$ can be decomposed orthogonally as  $\mathfrak k=\mathfrak {su}_2\oplus\mathfrak {su}_m\oplus\mathfrak u_1$, where $\mathfrak u_1$ is the center of $\mathfrak k$.
The adjoint action of $\mathfrak {su}_2$ on ${\mathfrak m}$ induces a  quaternionic K\"{a}hler structure ${\mathfrak J}$,  
and the adjoint action of \begin{equation*}
Z = \left(\begin{array}{cc}
 \frac{mi}{m+2}I_2 & 0 \\
 0 & \frac{-2i}{m+2}I_m
 \end{array}\right)
 \in {\mathfrak u}_1
\end{equation*}
induces a K\"{a}hler structure $J$ on $SU_{2,m}/S(U_2U_m)$ respectively.

In this paper, we use a unified notation. Denote by  $\hat M^m(c)$ the compact complex Grassmannian $ SU_{m+2}/S(U_2U_m)$ of rank two 
 (resp.  noncompact complex Grassmannian $SU_{2,m}/S(U_2U_m)$ of  rank two) for $c>0$ (resp. $c<0$), where    $c$ is a scaling factor for the Riemannian metric $g$.

For each $x\in \hat M^m(c)$,  let  $\{J_1,J_2,J_3\}$ be a canonical local basis of $\mathfrak J$
on a neighborhood $\mathcal U$ of $x\in \hat M(c)$, that is, each $J_a$ is a local almost Hermitian structure 
such that
\begin{align}\label{eqn:quaternion}
J_aJ_{a+1}=J_{a+2}=-J_{a+1}J_a, \quad a\in\{1,2,3\}.  
\end{align}
Here, the index $a$ is taken modulo three.
Denote by $\hat\nabla$ the Levi-Civita connection of $\hat M^m(c)$.
There exists local $1$-forms $q_1$, $q_2$ and $q_3$ such that
\[
\hat\nabla_XJ_a=q_{a+2}(X)J_{a+1}-q_{a+1}(X)J_{a+2}
\]
for any $X\in T_x\hat M^m(c)$, that is, $\mathfrak J$ is parallel with respect to $\hat\nabla$.
The K\"ahler structure $J$ and quarternionic K\"ahler structure $\mathfrak J$ are related by 
\begin{align}\label{eqn:JJa}
JJ_a=J_aJ; \quad \trace{(JJ_a)}=0, \quad a\in\{1,2,3\}.
\end{align}

The Riemannian curvature tensor $\hat R$ of $\hat M(c)$ is locally given by
\begin{align}\label{eqn:hatR}
\hat R(X,Y)Z=&c\{g( Y,Z) X-g( X,Z) Y+g( JY,Z) JX				\nonumber\\
													&-g( JX,Z) JY-2g( JX,Y) JZ	\}\nonumber\\
&+c\sum_{a=1}^3\{g( J_aY,Z) J_aX-g( J_aX,Z) J_aY-2g( J_aX,Y) J_aZ							\nonumber\\
&+g( JJ_aY,Z) JJ_aX-g( JJ_aX,Z) JJ_aY\}.
\end{align}
for all $X$, $Y$ and $Z\in T_x\hat M^m(c)$.

For a nonzero vector $X\in T_x\hat M^m(c)$, we denote by 
$\mathfrak JX=\{J'X|J'\in\mathfrak J_x\}$.
Recall that a maximal flat in a $\hat {M}^m(c)$ is a connected complete,  totally geodesic flat submanifold
of maximal dimension. 
Let  $X\in T\hat M^m(c)$ be a non-zero vector. Then $X$ is said to be \emph{singular} 
if  it is contained in more than one maximal flat in $\hat M^m(c)$.
It is well-known that $X$ is singular if and only if either $JX \in \mathfrak J X$ or $JX \perp \mathfrak J X$.

\section{Real hypersurfaces in $\hat M^m(c)$}

In this section, we prepare and derive some fundamental identities for real hypersurfaces in $\hat M^m(c)$.
Some of these identities have been proven in \cite{ berndt-lee-suh, berndt-suh, berndt-suh2, looth, suh1}. 
Some well-known results are also stated.

Let $M$ be a connected, oriented real hypersurface isometrically immersed in $\hat M^m(c)$, $m\geq 3$, 
and $N$ a unit normal vector field on $M$. Denote by the same $g$ the Riemannian metric on $M$. 
A canonical local basis $\{J_1,J_2,J_3\}$ of $\mathfrak J$ on $\hat M^m(c)$ induces an almost contact metric 3-structure $(\phi_a,\xi_a,\eta_a,g)$ on $M$ by
$$
J_a X=\phi_a X+\eta_a (X) N,\quad\quad J_a N=-\xi_a,\quad\quad \eta_a(X)=g(X,\xi_a),
$$
for any $X\in TM$. It follows that 
\begin{align}\label{eqn:3-contact}
\left.\begin{aligned}
\phi_a\phi_{a+1}-\xi_a\otimes\eta_{a+1}&=\phi_{a+2}=-\phi_{a+1}\phi_a+\xi_{a+1}\otimes\eta_a \\ 
\phi_a\xi_{a+1}=\xi_{a+2}&=-\phi_{a+1}\xi_a  
\end{aligned}\right\}
\end{align}
for $a\in\{1,2,3\}$. The indices in the preceding equations are taken modulo three.

Let $(\phi, \xi,\eta,g)$ be the almost contact metric structure on $M$ induced by $J$,  that is,  
\begin{align*}	
JX=\phi X+\eta(X)N,	\quad JN=-\xi, \quad \eta(X)=g(X,\xi). 
\end{align*}
The two structures $(\phi,\xi,\eta,g)$ and $(\phi_a,\xi_a,\eta_a,g)$
are related as follows  
\begin{align}\label{eqn:1}
\phi_a\phi-\xi_a\otimes\eta=\phi\phi_a-\xi\otimes\eta_a; \quad \phi\xi_a=\phi_a\xi.
\end{align}

Next, we denote by $\nabla$ the Levi-Civita connection and $A$ the shape operator on $M$. Then 
\begin{eqnarray}\label{eqn:contact}
\left.\begin{aligned}
&(\nabla_{X} \phi)Y=\eta(Y)AX-g( AX,Y)\xi, \quad \nabla_X \xi = \phi AX \\
&(\nabla_{X}\phi_a)Y=\eta_a (Y)AX- g(AX,Y)\xi_{a}
                                        +q_{a+2}(X)\phi_{a+1}Y-q_{a+1}(X)\phi_{a+2}Y \\
&\nabla_X \xi_a = \phi_a AX+q_{a+2}(X)\xi_{a+1}-q_{a+1}(X)\xi_{a+2} \\
&X\eta(\xi_a)=2\eta_a(\phi AX)+\eta_{a+1}(\xi)q_{a+2}(X)- \eta_{a+2}(\xi)q_{a+1}(X).
\end{aligned}\right\}
\end{eqnarray}

Let $\mathfrak D^\perp=\mathfrak JN$, and $\mathfrak D$ its orthogonal complement in $TM$. 
If $\xi\in\mathfrak D$, then $\eta(\xi_a)=0$ for $a\in\{1,2,3\}$ and so by the preceding equation, we obtain 
\begin{lemma}\label{lem:Aphixi_a}
If $\xi\in\mathfrak D$, then $A\phi\xi_a=0$ for $a\in\{1,2,3\}$.
\end{lemma}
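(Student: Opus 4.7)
The plan is to read off the result directly from the last identity of the block \eqref{eqn:contact}, namely
\[
X\eta(\xi_a)=2\eta_a(\phi AX)+\eta_{a+1}(\xi)q_{a+2}(X)-\eta_{a+2}(\xi)q_{a+1}(X),
\]
which already encodes precisely how the tangential part of $A\phi\xi_a$ interacts with the scalar functions $\eta(\xi_a)$. The hypothesis $\xi\in\mathfrak D$ means $\xi\perp\mathfrak JN$, i.e.\ $g(\xi,\xi_b)=\eta(\xi_b)=0$ for every $b\in\{1,2,3\}$ on (a neighborhood in) $M$.

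With this hypothesis the left-hand side $X\eta(\xi_a)$ vanishes (it is the derivative of the zero function), and so do the two terms involving $\eta_{a+1}(\xi)$ and $\eta_{a+2}(\xi)$ on the right, since these also coincide with $\eta(\xi_{a+1})$ and $\eta(\xi_{a+2})$ up to sign. What remains is $2\eta_a(\phi AX)=0$ for every $X\in TM$.

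Finally, I would rewrite $\eta_a(\phi AX)$ as an inner product with a fixed vector: using $\eta_a(Y)=g(Y,\xi_a)$ and the skew-symmetry of $\phi$ with respect to $g$, we get
\[
\eta_a(\phi AX)=g(\phi AX,\xi_a)=-g(AX,\phi\xi_a)=-g(X,A\phi\xi_a).
\]
Since this vanishes for all $X\in TM$, we conclude $A\phi\xi_a=0$. No real obstacle is expected; the entire argument is a one-line consequence of the structural equations in \eqref{eqn:contact} once the hypothesis $\xi\in\mathfrak D$ is translated into $\eta(\xi_a)\equiv 0$. The only thing to be mindful of is the sign convention used when passing $\phi$ across the inner product, which follows from the almost contact metric compatibility $g(\phi X,Y)+g(X,\phi Y)=0$.
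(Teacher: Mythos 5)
Your proposal is correct and is essentially identical to the paper's own argument: the paper also derives the lemma directly from the last identity in \eqref{eqn:contact}, noting that $\xi\in\mathfrak D$ forces $\eta(\xi_a)=0$ for all $a$, so that $\eta_a(\phi AX)=0$ for all $X$, i.e.\ $A\phi\xi_a=0$. The only point worth keeping in mind, which you already flag, is that the hypothesis must hold on an open set for the derivative $X\eta(\xi_a)$ to vanish.
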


We define a local  symmetric $(1,1)$-tensor field $\theta_a$ on $M$  by
\[
\theta_a :=\phi_a\phi -\xi_a\otimes\eta.
\]
Then we have the following identities 
\begin{lemma}\label{lem:theta}
\begin{enumerate}
\item[(a)] $\theta_a$ is symmetric,
\item[(b)] $\trace{(\theta_a)}=\eta(\xi_a)$,
\item[(c)] $\theta_a\xi=-\xi_a; \quad \theta_a\xi_a=-\xi; \quad \theta_a\phi\xi_a=\eta(\xi_a)\phi\xi_a$, 
\item[(d)]  $\theta_a\xi_{a+1}= \phi\xi_{a+2}=-\theta_{a+1}\xi_a$,
\item[(e)] $\theta_a^2-\phi\xi_a\otimes\eta_a\phi=\mathbb I$,
\item[(f)] $-\theta_a\phi\xi_{a+1}+\eta(\xi_{a+1})\phi\xi_a =\xi_{a+2}=\theta_{a+1}\phi\xi_a-\eta(\xi_a)\phi\xi_{a+1}$,
\item[(g)] $-\theta_a\theta_{a+1}+\phi\xi_a\otimes\eta_{a+1}\phi=\phi_{a+2}
											=\theta_{a+1}\theta_a-\phi\xi_{a+1}\otimes\eta_a\phi$,
\item[(h)] $\theta_a\phi-\phi\xi_a\otimes\eta=-\phi_a=\phi\theta_a-\xi\otimes\eta_a\phi$,				
\item[(i)] $\theta_a\phi_a-\phi\xi_a\otimes\eta_a=-\phi=\phi_a\theta_a-\xi_a\otimes\eta_a\phi$,											
\item[(j)] $\theta_a\phi_{a+1}-\phi\xi_a\otimes\eta_{a+1}=\theta_{a+2}=-\phi_{a+1}\theta_a-\xi_{a+1}\otimes\eta_a\phi$.													
\end{enumerate}
\end{lemma}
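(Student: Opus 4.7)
The plan is to prove all ten identities by direct algebraic computation, drawing on three sources: the almost contact 3-structure relations \eqref{eqn:3-contact}, the $J$-versus-$J_a$ compatibility relations \eqref{eqn:1}, and the standard identities for each individual almost contact metric structure (namely $\phi^2=-\mathbb I+\xi\otimes\eta$, $\phi\xi=0$, $\eta\circ\phi=0$, and the analogues for $\phi_a$). The double expression for $\theta_a$ coming from \eqref{eqn:1}, namely $\theta_a=\phi_a\phi-\xi_a\otimes\eta=\phi\phi_a-\xi\otimes\eta_a$, will be used repeatedly; different parts of the lemma are cleanest in one form or the other.

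For (a), I would compute $g(\theta_a X,Y)$ using skew-symmetry of $\phi$ and $\phi_a$ to move them onto the second slot, then rewrite $\phi\phi_a$ via \eqref{eqn:1} back as $\phi_a\phi-\xi_a\otimes\eta+\xi\otimes\eta_a$; the cross terms involving $\eta,\eta_a$ cancel against $-\eta(X)\eta_a(Y)$, giving $g(X,\theta_a Y)$. For (b), I would use \eqref{eqn:JJa} on the ambient side, $\trace(JJ_a)=0$, and expand $JJ_a$ in an orthonormal frame adapted to $TM\oplus\mathbb R N$: this yields $\trace(\phi\phi_a)=2\eta(\xi_a)$, from which $\trace(\theta_a)=\trace(\phi_a\phi)-\eta(\xi_a)=\eta(\xi_a)$ follows.

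Identities (c)--(j) are then pure bookkeeping. For (c), applying $\theta_a$ to $\xi$, $\xi_a$, $\phi\xi_a$ and using $\phi\xi_a=\phi_a\xi$ from \eqref{eqn:1}, together with $\phi_a^2=-\mathbb I+\xi_a\otimes\eta_a$, gives the three formulas; note that $\eta_a(\xi)=\eta(\xi_a)$. For (d), (f) one applies $\theta_a$ to $\xi_{a+1}$ and $\phi\xi_{a+1}$ and uses the cyclic relations $\phi_a\xi_{a+1}=\xi_{a+2}$ and $\phi_a\phi_{a+1}=\phi_{a+2}+\xi_a\otimes\eta_{a+1}$ from \eqref{eqn:3-contact}. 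For (e), expanding $\theta_a^2X=\theta_a(\phi_a\phi X-\eta(X)\xi_a)$ produces a $\phi_a\phi\phi_a\phi X$ term; rewriting the inner $\phi\phi_a$ via \eqref{eqn:1} collapses this to $X$ plus a $\phi\xi_a\,\eta_a(\phi X)$ correction, which is exactly $\phi\xi_a\otimes\eta_a\phi$ applied to $X$ after the error-term cancellations. Identities (h)--(j) follow the same template: multiply the defining expressions for $\theta_a,\theta_{a+1},\theta_{a+2}$, rewrite all products of $\phi$'s with $\phi_a$'s using \eqref{eqn:1} and \eqref{eqn:3-contact}, and collect the rank-one corrections; the two-sided formula in each of (g), (h), (i), (j) follows automatically by using the alternative form $\theta_a=\phi\phi_a-\xi\otimes\eta_a$ for the right-hand equality.

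The main obstacle is not conceptual but organizational: (g) and (j) each require combining two applications of \eqref{eqn:3-contact} with two applications of \eqref{eqn:1}, and the rank-one terms $\xi_a\otimes(\cdots)$, $\xi\otimes(\cdots)$, and $\phi\xi_a\otimes(\cdots)$ that emerge must be tracked carefully so that only the ones stated in the lemma survive. A safe bookkeeping strategy is to test each identity on the distinguished vectors $\xi$, $\xi_a$, $\xi_{a+1}$, $\xi_{a+2}$, $\phi\xi_a$, and on an arbitrary vector orthogonal to all of them (where $\eta,\eta_a$ vanish, reducing each identity to a purely $\phi$--$\phi_a$ statement), which separates the structural content from the rank-one corrections.
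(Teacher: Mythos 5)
Your proposal is correct and is essentially the paper's approach: a direct algebraic verification of each identity from \eqref{eqn:3-contact}, \eqref{eqn:1} and the two expressions $\theta_a=\phi_a\phi-\xi_a\otimes\eta=\phi\phi_a-\xi\otimes\eta_a$ (the paper outsources (a)--(f) to the computation in \cite{looth}, which proceeds the same way). The only cosmetic difference is that for the second equalities in (g)--(j) the paper takes adjoints of the first equalities, using the symmetry of $\theta_a$ and skew-symmetry of $\phi,\phi_a$, which is a little shorter than re-expanding with the alternative form as you propose, but both routes close.
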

\begin{proof}
(a)--(f)  The proof is exactly the same as that given in  \cite{looth}.

\medskip
(g)  By  using (\ref{eqn:3-contact})--(\ref{eqn:1}), we have
\begin{align*}
\theta_a\theta_{a+1}-\phi\xi_a\otimes\eta_{a+1}\phi
=&(\phi_a\phi-\xi_a\otimes\eta)(\phi\phi_{a+1}-\xi\otimes \eta_{a+1})-\phi\xi_a\otimes\eta_{a+1}\phi \nonumber\\
=&\phi_a\phi^2\phi_{a+1}+\xi_a\otimes\eta_{a+1}-\phi\xi_a\otimes\eta_{a+1}\phi\nonumber\\
=&\phi_a(-\mathbb I+\xi\otimes\eta)\phi_{a+1}+\xi_a\otimes\eta_{a+1}-\phi\xi_a\otimes\eta_{a+1}\phi\nonumber\\
=&-\phi_a\phi_{a+1}+\xi_a\otimes\eta_{a+1}\nonumber\\
=&-\phi_{a+2}.
\end{align*}
The second equality can be obtained as follows
\begin{align*}
\phi_{a+2}=(-\phi_{a+2})^*=&(\theta_a\theta_{a+1}-\phi\xi_a\otimes\eta_{a+1}\phi)^*
=\theta_{a+1}\theta_a-\phi\xi_{a+1}\otimes\eta_a\phi
\end{align*}
where  we denote by $T^*$ the adjoint of  an endomorphism $T$ with respect to $g$. 

\medskip
(h)--(j) The first equalities can be obtained as follows
\begin{align*}
\theta_a\phi&=(\phi_a\phi-\xi_a\otimes\eta)\phi =\phi_a(-\mathbb I+\xi\otimes\eta)=-\phi_a+\phi\xi_a\otimes\eta			\\
\theta_a\phi_a&=(\phi\phi_a-\xi\otimes\eta_a)\phi_a =\phi(-\mathbb I+\xi_a\otimes\eta_a)=-\phi+\phi\xi_a\otimes\eta_a \\										
\theta_a\phi_{a+1}&=(\phi\phi_a-\xi\otimes\eta_a)\phi_{a+1} =\phi(\phi_{a+2}+\xi_a\otimes\eta_{a+1})-\xi\otimes\eta_{a+2} \\		
&=(\phi\phi_{a+2}-\xi\otimes\eta_{a+2})+\phi\xi_a\otimes\eta_{a+1}=\theta_{a+2}+\phi\xi_a\otimes\eta_{a+1}.
\end{align*}
In  a similar manner as in (g), we can obtain the second equalities for these parts.
\end{proof}

Note that 
\begin{align*}
(\nabla_X\theta_a)Y
=&(\nabla_X\phi)\phi_aY+\phi(\nabla_X\phi_a)Y-g(\nabla_X\xi_a,Y)\xi-\eta_a(Y)\nabla_X\xi \\
\nabla_X\phi\xi_a
=&(\nabla_X\phi)\xi_a+\phi\nabla_X\xi_a. 
\end{align*}
Then by applying  (\ref{eqn:contact}), we obtain
\begin{align}\label{eqn:del_theta-a}
\left.\begin{aligned}
(\nabla_X\theta_a)Y
=&\eta_a(\phi Y)AX-g(AX,Y)\phi\xi_a+q_{a+2}(X)\theta_{a+1}Y-q_{a+1}(X)\theta_{a+2}Y \\
\nabla_X\phi\xi_a
=&\theta_aAX+\eta_a(\xi)AX+q_{a+2}(X)\phi\xi_{a+1} - q_{a+1}(X)\phi\xi_{a+2}. 
\end{aligned}\right\}
\end{align}

For each $x\in M$,   we define a subspace $\mathcal H^\perp$ of $T_xM$ by
$$\mathcal H^\perp: =\mathrm{span}\{\xi,\xi_1,\xi_2,\xi_3,\phi\xi_1,\phi\xi_2,\phi\xi_3\}.$$
Let $\mathcal{H}$ be the orthogonal complement of $\mathcal {H}^\perp$ in $T_xM$. 
Then  
$\dim\mathcal H=4m-4$  (resp. $\dim\mathcal H=4m-8$) when $\xi\in\mathfrak D^\perp$ (resp. $\xi\notin\mathfrak D^\perp$)
and 
$\mathcal{H}$ is invariant under $\phi,\phi_a$ and $\theta_a$. Moreover, $\theta_{a|\mathcal{H}}$ has two eigenvalues: $1$ and $-1$. 
Denote by  $\mathcal H_a(\varepsilon)$ the eigenspace corresponding to the eigenvalue $\varepsilon$ of 
${\theta_a}_{|\mathcal H}$.
Then $\dim \mathcal H_a(1)=\dim \mathcal H_a(-1)$ is even, and 
\begin{align}
\left.\begin{aligned}\label{eqn:Ha}
&\phi\mathcal H_a(\varepsilon)=\phi_a\mathcal H_a(\varepsilon)=\theta_a\mathcal H_a(\varepsilon)=\mathcal H_a(\varepsilon) \\
&\phi_b\mathcal H_a(\varepsilon)=\theta_b\mathcal H_a(\varepsilon)=\mathcal H_a(-\varepsilon), \quad (a\neq b).
\end{aligned}\right\}
\end{align}
The proof of (\ref{eqn:Ha}) is exactly the same as presented in \cite[pp. 92--93]{looth}.

Observe that $\tan(JJ_aX)=\theta_aX$ and $\mathrm{nor}(JJ_aX)=\eta_a(\phi X)N$, for $X\in TM$.
Then the equations of Gauss and Codazzi are respectively given by
$$\begin{aligned}
R(X,Y)Z=&g( AY,Z) AX-g( AX,Z) AY+c\{g( Y,Z) X-g(X,Z) Y\\
&+g(\phi Y,Z)\phi X-g(\phi X,Z)\phi Y -2g(\phi X,Y)\phi Z\}\\
&+c\sum_{a=1}^3\{g(\phi_aY,Z)\phi_aX-g(\phi_aX,Z) \phi_aY-2g(\phi_aX,Y)\phi_aZ\\
&+g(\theta_aY,Z)\theta_aX-g(\theta_aX,Z)\theta_aY\}
\end{aligned}$$
\begin{align*}
(\nabla_X A)Y-(\nabla_Y A)X=&c\{\eta(X)\phi Y-\eta(Y)\phi X-2g(\phi X,Y)\xi\}\\
&+c\sum_{a=1}^3 \{\eta_a(X)\phi_a Y-\eta_a(Y)\phi_a X -2g(\phi_a X,Y)\xi_a\\&
+\eta_a(\phi X)\theta_a Y-\eta_a(\phi Y)\theta_a X\}.
\end{align*}

We define the tensor fields $\theta$, $\phi^\perp$, $\xi^\perp$ and $\eta^\perp$  on $M$ as follows
\begin{align*}
\theta:=&\sum^3_{a=1}\eta_a(\xi)\theta_a, \quad \phi^\perp:=\sum^3_{a=1}\eta_a(\xi)\phi_a, \quad 
\xi^\perp:=\sum^3_{a=1}\eta_a(\xi)\xi_a, \quad \eta^\perp:=\sum^3_{a=1}\eta_a(\xi)\eta_a.
\end{align*}

\begin{lemma}\label{lem:H-global}
At each $x\in M$ with $\xi^\perp\neq0$,  $\theta_{|\mathcal H}$ has two eigenvalues $\varepsilon||\xi^\perp||$, $\varepsilon\in\{1,-1\}$.
Let $\mathcal H(\varepsilon)$ be the eigenspace of $\theta_{|\mathcal H}$ corresponding to  $\varepsilon||\xi^\perp||$. Then 
\begin{enumerate}
\item[(a)] $\phi \mathcal H(\varepsilon)=\phi^\perp \mathcal H(\varepsilon)= \mathcal H(\varepsilon)$,
\item[(b)] $\dim \mathcal H(1)=\dim \mathcal H(-1)$ is even.
\end{enumerate}
\end{lemma}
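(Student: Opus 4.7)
The plan is to reduce everything to the behaviour of the $\theta_a$'s on $\mathcal H$, which is governed by Lemma~3.2, and then to exploit that $\mathcal H$ kills all the rank-one correction terms appearing there. More precisely, for $X\in\mathcal H$ we have $\eta(X)=0$, $\eta_a(X)=0$, and $\eta_a(\phi X)=-g(X,\phi\xi_a)=0$ because $\phi\xi_a\in\mathcal H^\perp$ and $\phi X\in\mathcal H$. Feeding these vanishings into Lemma~3.2(e) gives $\theta_a^2|_{\mathcal H}=\mathbb I$, while Lemma~3.2(g), read twice (in the orders $\theta_a\theta_{a+1}$ and $\theta_{a+1}\theta_a$) and summed, yields the Clifford-type anticommutation $\theta_a\theta_b+\theta_b\theta_a=0$ on $\mathcal H$ for $a\neq b$. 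Expanding $\theta^2=\sum_{a,b}\eta_a(\xi)\eta_b(\xi)\theta_a\theta_b$ and grouping symmetric pairs then collapses the cross terms and leaves
\[
\theta^2|_{\mathcal H}=\Big(\sum_a\eta_a(\xi)^2\Big)\mathbb I=\|\xi^\perp\|^2\,\mathbb I.
\]
Since each $\theta_a$, and hence $\theta$, is symmetric (Lemma~3.2(a)), the restriction $\theta|_{\mathcal H}$ diagonalises with eigenvalues $\pm\|\xi^\perp\|$, producing the decomposition $\mathcal H=\mathcal H(1)\oplus\mathcal H(-1)$.

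For part (a), I would use Lemma~3.2(h) in the form $\phi_a=-\phi\theta_a+\xi\otimes\eta_a\phi=-\theta_a\phi+\phi\xi_a\otimes\eta$. On $\mathcal H$ both correction terms vanish by the observations above, so $\theta_a\phi=\phi\theta_a=-\phi_a$ there. Summing against $\eta_a(\xi)$ gives $\theta\phi=\phi\theta$ on $\mathcal H$, and since $\phi^2=-\mathbb I$ on $\mathcal H$ the map $\phi$ is an invertible operator commuting with $\theta$, so it preserves each eigenspace $\mathcal H(\varepsilon)$. For $\phi^\perp$, the identity $\phi_a=-\phi\theta_a$ on $\mathcal H$ immediately gives $\phi^\perp|_{\mathcal H}=-\phi\theta$, and on $\mathcal H(\varepsilon)$ this becomes $\phi^\perp=-\varepsilon\|\xi^\perp\|\phi$, which preserves $\mathcal H(\varepsilon)$.

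For part (b), the natural move is to build an intertwiner. Choose any unit vector $v=(v_1,v_2,v_3)\in\mathbb R^3$ orthogonal to $u:=(\eta_1(\xi),\eta_2(\xi),\eta_3(\xi))/\|\xi^\perp\|$, and set $T:=\sum_a v_a\theta_a$. Using $\theta_a^2=\mathbb I$ and $\theta_a\theta_b+\theta_b\theta_a=0$ ($a\neq b$) on $\mathcal H$, one gets $T^2|_{\mathcal H}=|v|^2\mathbb I=\mathbb I$, and
\[
(T\theta+\theta T)|_{\mathcal H}=\sum_{a,b}v_a\eta_b(\xi)(\theta_a\theta_b+\theta_b\theta_a)=2\langle v,u\rangle\|\xi^\perp\|\,\mathbb I=0.
\]
Hence $T|_{\mathcal H}$ is an involution that anticommutes with $\theta|_{\mathcal H}$, so it interchanges $\mathcal H(1)$ and $\mathcal H(-1)$ as an isomorphism; this gives the equality of dimensions. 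Evenness then follows because $\phi$ restricts to an almost complex structure ($\phi^2=-\mathbb I$) on each $\mathcal H(\varepsilon)$ by part (a).

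The only step needing care, and the one I would flag as the main obstacle, is the bookkeeping that eliminates all the $\phi\xi_a\otimes(\cdot)$ and $\xi\otimes(\cdot)$ correction terms coming from Lemma~3.2(e)--(h); everything rests on the observation $\eta_a(\phi X)=0$ for $X\in\mathcal H$, together with $\phi\mathcal H=\mathcal H$. Once these vanishings are securely in hand, the rest is a tidy Clifford-algebra argument on $\mathcal H$.
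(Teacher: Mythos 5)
Your proof is correct, but it takes a genuinely different route from the paper. The paper's proof is a two-line reduction: it uses the freedom to rotate the canonical local basis of $\mathfrak J$ so that $\xi_1=\xi^\perp/||\xi^\perp||$, whence $\theta=\eta_1(\xi)\theta_1$, $\phi^\perp=\eta_1(\xi)\phi_1$ and $\mathcal H(\varepsilon)=\mathcal H_1(\varepsilon)$, and then everything follows from the already-recorded spectral facts (3.7) about a single $\theta_1$ on $\mathcal H$ (imported from the earlier reference). You instead keep the basis general and extract from Lemma 3.2(e),(g),(h) the Clifford relations $\theta_a^2=\mathbb I$, $\theta_a\theta_b+\theta_b\theta_a=0$ ($a\neq b$) and $\theta_a\phi=\phi\theta_a=-\phi_a$ on $\mathcal H$ (all the rank-one corrections die there since $\eta(X)=\eta_a(X)=\eta_a(\phi X)=0$ for $X\in\mathcal H$), deduce $\theta^2|_{\mathcal H}=||\xi^\perp||^2\mathbb I$ and $[\theta,\phi]|_{\mathcal H}=0$, and manufacture an explicit involution $T=\sum_a v_a\theta_a$ anticommuting with $\theta$ to interchange the two eigenspaces. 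The paper's route is shorter and leans on prior work; yours is self-contained at this point, makes the Clifford-module structure on $\mathcal H$ explicit, and in fact independently re-derives the dimension-equality content of (3.7) rather than quoting it. One cosmetic remark: the conclusion that $\theta|_{\mathcal H}$ actually \emph{has} both eigenvalues (not merely that its spectrum lies in $\{\pm||\xi^\perp||\}$) comes from combining your intertwiner with $\dim\mathcal H\geq 4m-8>0$; it is worth saying this explicitly, but it is not a gap.
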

\begin{proof}
We take a canonical local basis of $\mathfrak J$ such that 
$\xi_1=\xi^\perp/||\xi^\perp||$, so $\theta=\eta_1(\xi)\theta_1$, $\phi^\perp=\eta_1(\xi)\phi_1$ and $\eta^\perp=\eta_1(\xi)\eta_1$. 
It follows that $\mathcal H(\varepsilon)=\mathcal H_1(\varepsilon)$ and so  these results  follow from (\ref{eqn:Ha}).
\end{proof}

As in the proof of Lemma~\ref{lem:H-global}, at each $x\in M$ with $||\xi^\perp||\neq0$, we can take a canonical local basis of $\mathfrak J$ such that on a neighborhood $G\subset M$ of $x$, we have
\begin{align}\label{eqn:natural-basis}
\left.\begin{aligned}
&\xi_1=\frac{\xi^\perp}{||\xi^\perp||}, \quad  0<\eta_1(\xi)=||\xi^\perp||\leq 1, \quad \eta_2(\xi)=\eta_3(\xi)=0\\
&\mathcal H(\varepsilon)=\mathcal H_1(\varepsilon),  \quad \theta=\eta_1(\xi)\theta_1,   \quad \phi^\perp=\eta_1(\xi)\phi_1,
				\quad \eta^\perp=\eta_1(\xi)\eta_1
\end{aligned}\right\}
\end{align}
where  $\mathcal H(\varepsilon)$  is the eigenspace of $\theta_{|\mathcal H}$ corresponding to  an eigenvalue $\varepsilon||\xi^\perp||$ of 
$\theta_{|\mathcal H}$ for $\varepsilon\in\{1.-1\}$.  
Furthermore if $||\xi^\perp||=1$ at  $x$, then
\begin{align}\label{eqn:natural-basis2}
&\xi_1=\xi=\xi^\perp, \quad  \xi_2=\theta\xi_2=\phi\xi_3, \quad \xi_3=\theta\xi_3=-\phi\xi_2
\end{align}

Throughout this paper, we always consider such a local orthonormal frame $\{\xi_1,\xi_2,\xi_3\}$ on $\mathfrak D^\perp$ under these situations.

\begin{lemma}\label{lem:global}
\begin{enumerate}
\item[(a)] $\theta\xi^\perp=-||\xi^\perp||^2\xi$, \quad   $\theta\xi=-\xi^\perp,  \quad \theta\phi\xi^\perp=||\xi^\perp||^2\phi\xi^\perp$, 

\item[(b)]	
									$\theta^2-\phi\xi^\perp\otimes\eta^\perp\phi=||\xi^\perp||^2\mathbb I$,  
\item [(c)] $\theta\phi-\phi\xi^\perp\otimes\eta=-\phi^\perp=\phi\theta-\xi\otimes\eta^\perp\phi $,
\item[(d)] $(\phi^\perp)^2 =-||\xi^\perp||^2\mathbb I+\xi^\perp\otimes\eta^\perp$,
\item[(e)] $\phi^\perp\phi-\xi^\perp\otimes\eta=\theta=\phi\phi^\perp-\xi\otimes\eta^\perp $,
\item[(f)]  $d(||\xi^\perp||^2)=4\eta^\perp\phi A $,
\item[(g)] $(\nabla_X\phi^\perp)Y
																=\eta^\perp(Y)AX-g(AX,Y)\xi^\perp+2\sum^3_{a=1}\eta_a(\phi AX)\phi_aY$
\item[(h)] $\nabla_X\xi^\perp=\phi^\perp AX+2\sum^3_{a=1}\eta_a(\phi AX)\xi_a$
\item[(i)] $(\nabla_X\theta)Y=\eta^\perp(\phi Y)AX-g(AX,Y)\phi\xi^\perp
															+2\sum^3_{a=1}\eta_a(\phi AX)\theta_aY $
\item[(j)] 	$\nabla_X\phi\xi^\perp=\theta AX+||\xi^\perp||^2AX+2\sum^3_{a=1}\eta_a(\phi AX)\phi\xi_a$.
\end{enumerate}
\end{lemma}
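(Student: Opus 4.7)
The plan is to prove (a)--(e) as pointwise algebraic consequences of Lemma~\ref{lem:theta} by expanding the defining $\eta_a(\xi)$-weighted sums, and to prove (f)--(j) from the covariant derivative formulas in (\ref{eqn:contact}) and (\ref{eqn:del_theta-a}) via the Leibniz rule, showing that the connection $1$-forms $q_a$ drop out by cyclic-sum antisymmetry.

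For the algebraic identities, consider for instance the first part of (a): expanding gives $\theta\xi^\perp=\sum_{a,b}\eta_a(\xi)\eta_b(\xi)\theta_a\xi_b$, whose diagonal part is $\sum_a\eta_a(\xi)^2\theta_a\xi_a=-||\xi^\perp||^2\xi$ by Lemma~\ref{lem:theta}(c), while for $a\neq b$ Lemma~\ref{lem:theta}(d) yields $\theta_a\xi_b+\theta_b\xi_a=0$ so the off-diagonal pairs cancel. The other two parts of (a) follow the same pattern from Lemma~\ref{lem:theta}(c), (f). Parts (b), (d) come from Lemma~\ref{lem:theta}(e), (g): the diagonal yields $||\xi^\perp||^2\mathbb{I}$ (resp.\ $-||\xi^\perp||^2\mathbb{I}+\xi^\perp\otimes\eta^\perp$) after absorbing the rank-one corrections, and the off-diagonal pairs again annihilate because Lemma~\ref{lem:theta}(g) is antisymmetric in $(a,a+1)$. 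Parts (c) and (e) are the cyclic analogues pivoting on Lemma~\ref{lem:theta}(h), (i), (j), where the contributions with $a\neq b$ in Lemma~\ref{lem:theta}(j) similarly cancel in pairs.

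For the differential identities, the starting point is the last equation of (\ref{eqn:contact}), which reads $X\eta_a(\xi)=2\eta_a(\phi AX)+\eta_{a+1}(\xi)q_{a+2}(X)-\eta_{a+2}(\xi)q_{a+1}(X)$. For (f), I multiply by $2\eta_a(\xi)$ and sum over $a$; after reindexing, each $q_b(X)$ collects the coefficient $\eta_{b-1}(\xi)\eta_{b+1}(\xi)-\eta_{b+1}(\xi)\eta_{b-1}(\xi)=0$, so $d(||\xi^\perp||^2)(X)=4\sum_a\eta_a(\xi)\eta_a(\phi AX)=4\eta^\perp(\phi AX)$. For (g)--(j), I apply the Leibniz rule to $\phi^\perp=\sum\eta_a(\xi)\phi_a$ and to the analogous sums for $\xi^\perp$, $\theta$, $\phi\xi^\perp$, then substitute the formulas from (\ref{eqn:contact}) and (\ref{eqn:del_theta-a}). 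The $q$-terms coming from $X\eta_a(\xi)$ and from $\nabla_X\phi_a$ combine, and the same cyclic reindexing as in (f) kills them. The factor $2$ in front of $\sum\eta_a(\phi AX)(\cdot)$ reflects the two parallel sources of this expression: one from differentiating the coefficient $\eta_a(\xi)$ and one appearing inside the covariant derivative of $\phi_a$, $\xi_a$, $\theta_a$, or $\phi\xi_a$.

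The main obstacle is not any single insight but the systematic cancellation of all $q_a(X)$ contributions in (g)--(j): each identity produces six $q$-terms (three from the coefficient derivative, three from the interior derivative), and one must verify by reindexing modulo three that they pair off precisely. Given Lemma~\ref{lem:theta} and the Leibniz rule, the rest is careful bookkeeping.
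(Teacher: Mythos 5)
Your treatment of (f)--(j) is essentially the paper's: the paper also writes each object as an $\eta_a(\xi)$-weighted sum, applies the Leibniz rule with the last equation of (\ref{eqn:contact}) and with (\ref{eqn:del_theta-a}), and relies on the cyclic reindexing modulo three to cancel the six $q$-terms (the paper carries this out explicitly only for (i), leaving the others as ``similar''). For (a)--(e), however, you take a genuinely different and more laborious route. The paper first disposes of the points where $\xi^\perp=0$ (everything is trivially $0=0$ there) and, where $\|\xi^\perp\|\neq0$, chooses the canonical basis of $\mathfrak J$ adapted as in (\ref{eqn:natural-basis}), i.e.\ $\xi_1=\xi^\perp/\|\xi^\perp\|$ and $\eta_2(\xi)=\eta_3(\xi)=0$; then $\theta=\eta_1(\xi)\theta_1$, $\phi^\perp=\eta_1(\xi)\phi_1$, $\eta^\perp=\eta_1(\xi)\eta_1$, the double sums collapse to a single index, and (a)--(e) are literally the $a=1$ cases of Lemma~\ref{lem:theta} multiplied by powers of $\eta_1(\xi)$. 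Your direct expansion avoids the gauge choice and is pointwise uniform, at the cost of tracking all cross terms; the paper's choice buys a two-line proof but uses the freedom to rotate the canonical basis.

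One caution on your expansion: your repeated claim that the off-diagonal pairs ``cancel'' or ``annihilate'' is only accurate for the first identity of (a). For the third identity of (a), and for (b) and (d), only the antisymmetric parts ($\xi_{a+2}$, $\phi_{a+2}$) of Lemma~\ref{lem:theta}(f),(g) and of (\ref{eqn:3-contact}) cancel between the $(a,b)$ and $(b,a)$ terms; the symmetric rank-one remainders survive and are exactly what supplies the cross terms $\sum_{a\neq b}\eta_a(\xi)\eta_b(\xi)\,\phi\xi_a\otimes\eta_b\phi$ (resp.\ $\xi_a\otimes\eta_b$) needed to assemble $\phi\xi^\perp\otimes\eta^\perp\phi$ (resp.\ $\xi^\perp\otimes\eta^\perp$) from the purely diagonal contribution $\sum_a\eta_a(\xi)^2\,\phi\xi_a\otimes\eta_a\phi$. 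The computation closes correctly once this is accounted for, so this is an imprecision of description rather than a gap in the argument, but as written the bookkeeping does not match what actually happens.
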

\begin{proof}
(a)--(e) At each $x\in M$ with $\xi^\perp=0$,  we have $\theta=\phi^\perp=0$ and $\eta^\perp=0$. Hence these identities are trivial. 
Suppose $||\xi^\perp||\neq0$ at a point $x\in M$. Then these identities can be easily obtained from Lemma~\ref{lem:theta} and 
(\ref{eqn:natural-basis}).

\medskip
(f)--(j) We only give the proof for (i) as the remaining parts can be obtained by a similar straightforward calculation.
By using (\ref{eqn:contact})--(\ref{eqn:del_theta-a}), 
\begin{align*}
(\nabla_X\theta)Y
=&\sum^3_{a=1}\{(X(\eta_a(\xi))\theta_a Y+\eta_a(\xi)(\nabla_X\theta_a)Y\}\\
=&\sum^3_{a=1}\{2\eta_a(\phi AX)\theta_aY+\eta_a(\xi)\eta_a(\phi Y)AX-g(AX,Y)\eta_a(\xi)\phi_a\xi\}\\
=&\sum^3_{a=1}2\eta_a(\phi AX)\theta_aY+\eta^\perp(\phi Y)AX-g(AX,Y)\phi\xi^\perp.
\end{align*}
\end{proof}

We now prepare some results for later use.
By using Lemma~\ref{lem:global}, we have
\begin{align}\label{eqn:201} 
d(\eta^\perp & \phi)  (X,Y) \nonumber\\
			&=-g(\nabla_X\phi\xi^\perp,Y)+g(\nabla_Y\phi\xi^\perp,X)  \nonumber\\
			&=-g((\theta A-A\theta)X,Y)+2\sum^3_{a=1}\{\eta_a(\phi AX)\eta_a(\phi Y)-\eta_a(\phi AY)\eta_a(\phi X)\}.
\end{align}
\begin{lemma}\label{lem:f}
Suppose $0<||\xi^\perp||<1$. 
If $A\phi\xi^\perp=\omega\phi\xi^\perp$, then 
\begin{enumerate}
\item[(a)]$d\omega=-||\xi^\perp||^{-2}(1-||\xi^\perp||^2)^{-1}(\phi\xi^\perp\omega)\eta^\perp\phi$,
\item[(b)] $\omega d(\eta^\perp\phi)=0$.
\end{enumerate}
\end{lemma}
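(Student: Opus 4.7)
\emph{Proof plan.} My approach for part (a) is to extract information about $d\omega$ by feeding the differentiated eigenvalue relation $A\phi\xi^\perp=\omega\phi\xi^\perp$ into the Codazzi equation and then pairing with $\phi\xi^\perp$. Direct differentiation gives
\[
(\nabla_XA)\phi\xi^\perp=(X\omega)\phi\xi^\perp+(\omega\mathbb I-A)\nabla_X\phi\xi^\perp,
\]
and pairing with $\phi\xi^\perp$ kills the second term by self-adjointness of $A$ combined with $(A-\omega\mathbb I)\phi\xi^\perp=0$. Using $\|\phi\xi^\perp\|^2=\|\xi^\perp\|^2-\eta(\xi^\perp)^2=\|\xi^\perp\|^2(1-\|\xi^\perp\|^2)$, I would obtain $g((\nabla_XA)\phi\xi^\perp,\phi\xi^\perp)=(X\omega)\|\xi^\perp\|^2(1-\|\xi^\perp\|^2)$, so that the coefficient $\|\xi^\perp\|^{-2}(1-\|\xi^\perp\|^2)^{-1}$ appearing in (a) emerges naturally after division.

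The next step is to rewrite $(\nabla_XA)\phi\xi^\perp$ via Codazzi as $(\nabla_{\phi\xi^\perp}A)X+c\cdot\mathrm{RHS}(X,\phi\xi^\perp)$, and to use the self-adjointness of $\nabla_{\phi\xi^\perp}A$ to swap arguments. Substituting $X=\phi\xi^\perp$ into Lemma~\ref{lem:global}(j) together with Lemma~\ref{lem:global}(a) I would compute $\nabla_{\phi\xi^\perp}\phi\xi^\perp=2\omega(2\|\xi^\perp\|^2-1)\phi\xi^\perp$, which lies in the $\omega$-eigenspace of $A$. This forces $(\nabla_{\phi\xi^\perp}A)\phi\xi^\perp=((\phi\xi^\perp)\omega)\phi\xi^\perp$ and therefore $g((\nabla_{\phi\xi^\perp}A)X,\phi\xi^\perp)=((\phi\xi^\perp)\omega)g(\phi\xi^\perp,X)$.

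The main obstacle is showing that the curvature term $g(\mathrm{RHS}(X,\phi\xi^\perp),\phi\xi^\perp)$ vanishes identically. Working in the natural basis (\ref{eqn:natural-basis}) I would first record $\eta(\phi\xi^\perp)=\eta_a(\phi\xi^\perp)=0$ and $\eta_a(\phi^2\xi^\perp)=-(1-\|\xi^\perp\|^2)\eta_a(\xi)$ (nonzero only for $a=1$). Skew-symmetry of $\phi_a$ and the orthogonalities $\xi,\xi_a\perp\phi\xi^\perp$ eliminate most summands in the Codazzi right-hand side at once; the only surviving contributions come from $\eta_a(\phi X)\theta_a\phi\xi^\perp$ and $\eta_a(\phi^2\xi^\perp)\theta_aX$, and Lemma~\ref{lem:theta}(c) shows that both reduce to $\pm\|\xi^\perp\|^2(1-\|\xi^\perp\|^2)g(X,\phi\xi^\perp)$ and cancel. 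Assembling the pieces gives $(X\omega)\|\phi\xi^\perp\|^2=((\phi\xi^\perp)\omega)g(\phi\xi^\perp,X)$, which after substituting $g(\phi\xi^\perp,X)=-\eta^\perp(\phi X)$ is exactly (a).

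For part (b), a short exterior-calculus argument finishes the job once (a) is established. Lemma~\ref{lem:global}(f) together with $A\phi\xi^\perp=\omega\phi\xi^\perp$ reduces to $d(\|\xi^\perp\|^2)=4\omega\,\eta^\perp\phi$, so the $1$-form $\omega\,\eta^\perp\phi=\tfrac14\,d(\|\xi^\perp\|^2)$ is exact, hence closed. Taking the exterior derivative gives $0=d\omega\wedge\eta^\perp\phi+\omega\,d(\eta^\perp\phi)$, and since (a) exhibits $d\omega$ as a function multiple of $\eta^\perp\phi$ the wedge vanishes, leaving $\omega\,d(\eta^\perp\phi)=0$.
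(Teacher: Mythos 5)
Your proof is correct, and for part (a) it takes a genuinely different route from the paper. The paper never touches the Codazzi equation here: it observes that $\tfrac14 d(\|\xi^\perp\|^2)=\eta^\perp\phi A=\omega\,\eta^\perp\phi$ is exact, so $d\omega\wedge\eta^\perp\phi+\omega\,d(\eta^\perp\phi)=0$, and then evaluates this $2$-form on $(X,\phi\xi^\perp)$, using the explicit formula (\ref{eqn:201}) together with $\theta\phi\xi^\perp=\|\xi^\perp\|^2\phi\xi^\perp$ and the eigenvalue hypothesis to show $d(\eta^\perp\phi)(X,\phi\xi^\perp)=0$; solving for $X\omega$ gives (a) in a few lines. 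You instead run the standard Hopf-type argument: differentiate $A\phi\xi^\perp=\omega\phi\xi^\perp$, pair with $\phi\xi^\perp$, pass through Codazzi, and identify $(\nabla_{\phi\xi^\perp}A)\phi\xi^\perp=((\phi\xi^\perp)\omega)\phi\xi^\perp$ via $\nabla_{\phi\xi^\perp}\phi\xi^\perp=2\omega(2\|\xi^\perp\|^2-1)\phi\xi^\perp$ (which I checked against Lemma~\ref{lem:global}(a),(j) — it is right). The price of your route is the verification that the ambient curvature term $g(\hat R(X,\phi\xi^\perp)N,\phi\xi^\perp)$ vanishes; your accounting of the surviving $\theta_a$-terms and their cancellation is correct (in the basis (\ref{eqn:natural-basis}) only $a=1$ contributes, giving $\mp\|\xi^\perp\|^2(1-\|\xi^\perp\|^2)g(X,\phi\xi^\perp)$), but this is exactly the computation the paper's exterior-calculus argument sidesteps. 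For part (b) your argument coincides with the paper's: closedness of $\omega\,\eta^\perp\phi$ plus the fact that (a) makes $d\omega$ proportional to $\eta^\perp\phi$.
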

\begin{proof}
It follows from the hypothesis and Lemma~\ref{lem:global} that 
\[
d\omega\wedge\eta^\perp\phi+\omega d(\eta^\perp\phi)=\frac14d^2(||\xi^\perp||^2)=0.
\] 
This, together with (\ref{eqn:201}),  gives 
\begin{align*}
X\omega
=&-\dfrac{(\phi\xi^\perp\omega)\eta^\perp(\phi X)+(d\omega\wedge\eta^\perp\phi)(X,\phi\xi^\perp)}
															{||\xi^\perp||^{2}(1-||\xi^\perp||^2)}	
=-\dfrac{(\phi\xi^\perp\omega)\eta^\perp(\phi X)}
															{||\xi^\perp||^{2}(1-||\xi^\perp||^2)}.	
\end{align*}
This means that $d\omega=-||\xi^\perp||^{-2}(1-||\xi^\perp||^2)^{-1}(\phi\xi^\perp\omega)\eta^\perp\phi$
and hence  $\omega d(\eta^\perp\phi)=0$.
\end{proof}

\begin{lemma}\label{lem:theta_A}
Suppose $||\xi^\perp||=1$ on $M$. 
Then 
\begin{enumerate}
\item[(a)] $\theta A+ A=-2\sum^3_{a=1}\phi\xi_a\otimes\eta_a\phi A$,
\item[(b)] $A\mathcal H(1)=0$.
\end{enumerate}
 \end{lemma}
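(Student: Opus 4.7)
The plan is to reduce part (a) to Lemma~\ref{lem:global}(j) by observing that the hypothesis $\|\xi^\perp\|=1$ forces $\phi\xi^\perp$ to vanish identically on $M$. With (a) in hand, (b) will follow algebraically from the orthogonal decomposition $TM=\mathcal{H}(1)\oplus\mathcal{H}(-1)\oplus\mathcal{H}^\perp$ together with the symmetry of $A$.

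For (a), first I would show $\phi\xi^\perp\equiv 0$: the hypothesis is equivalent to $\xi\in\mathfrak{D}^\perp$ at every point, so in the adapted frame (\ref{eqn:natural-basis2}) we have $\xi^\perp=\xi$ and $\eta(\xi)=1$; then comparing $J\xi=\phi\xi+\eta(\xi)N$ with $J\xi=J(-JN)=N$ yields $\phi\xi=0$, hence $\phi\xi^\perp=0$ on all of $M$. Consequently $\nabla_X(\phi\xi^\perp)=0$ for every $X$, and substituting into Lemma~\ref{lem:global}(j) (with $\|\xi^\perp\|^2=1$) gives
\[
0=\theta AX+AX+2\sum_{a=1}^{3}\eta_a(\phi AX)\phi\xi_a,
\]
which is exactly (a).

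For (b), rewrite (a) as $\theta AX+AX=-2\sum_a\eta_a(\phi AX)\phi\xi_a$. The right-hand side lies in $\mathrm{span}\{\phi\xi_1,\phi\xi_2,\phi\xi_3\}\subset\mathcal{H}^\perp$, so its $\mathcal{H}(1)$-component vanishes. Since $\theta$ restricts to the identity on $\mathcal{H}(1)$, the $\mathcal{H}(1)$-component of the left-hand side equals $2(AX)_{\mathcal{H}(1)}$, so projection forces $(AX)_{\mathcal{H}(1)}=0$ for every $X\in TM$. In particular $g(AX,W)=0$ for all $X\in TM$ and every $W\in\mathcal{H}(1)$; by symmetry of $A$ this gives $g(X,AW)=0$ for all $X$, hence $AW=0$, proving $A\mathcal{H}(1)=0$.

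The only nonroutine step is the observation $\phi\xi^\perp=0$ under the standing hypothesis; after that, both parts reduce to direct applications of the structural formulas already assembled in Section~3, and I do not anticipate any further obstacle.
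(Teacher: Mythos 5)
Your proof is correct and follows essentially the same route as the paper: the paper obtains (a) by expanding $0=\phi\nabla_X(\xi^\perp-\xi)$, which is exactly the quantity $\nabla_X\phi\xi^\perp$ you compute via Lemma~\ref{lem:global}(j) after noting $\phi\xi^\perp=\phi\xi=0$, and (b) is the same pairing/projection argument against $\mathcal H(1)$ using the symmetry of $A$ and $\theta$.
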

\begin{proof}
(a)  Since $\xi=\xi^\perp$, 
\begin{align*}
0=\phi\nabla_X(\xi^\perp-\xi)=(\theta A+ A)X+2\sum^3_{a=1}\eta_a(\phi AX)\phi\xi_a, \quad X\in TM.
\end{align*}

(b) Let $Y\in\mathcal H(1)$. Then  $2g(AY,X)=g(Y,(\theta A+A)X)=0$ for  $X\in TM$, which means that $A\mathcal H(1)=0$.
\end{proof}

\begin{lemma}\label{lem:general1}
At each $x\in M$ with  $||\xi^\perp||>0$,   
\[
\sum^3_{a=1}g(\theta_a X,Y)\eta_a(Z)=\sum^3_{a=1}g(\phi_a X,Y)\eta_a(Z)=0
\]
for any $X,Y,Z\in\mathcal H(\varepsilon)\oplus(\mathcal H^\perp\ominus\vspan\{\xi,\xi^\perp,\phi\xi^\perp\})$,
where $\varepsilon\in\{1,-1\}$. 

\end{lemma}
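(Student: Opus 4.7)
My plan is to work in the natural local frame from (\ref{eqn:natural-basis}), so that $\xi_1 = \xi^{\perp}/\|\xi^{\perp}\|$, $\eta_2(\xi) = \eta_3(\xi) = 0$, $\vspan\{\xi,\xi^{\perp},\phi\xi^{\perp}\} = \vspan\{\xi,\xi_1,\phi\xi_1\}$, and $\mathcal H(\varepsilon) = \mathcal H_1(\varepsilon)$. A short computation with (\ref{eqn:3-contact}), using $\phi\xi = 0$ and $\eta_2(\xi) = \eta_3(\xi) = 0$, shows that the four vectors $\xi_2,\xi_3,\phi\xi_2,\phi\xi_3$ are mutually orthogonal to $\xi,\xi_1,\phi\xi_1$; since both collections sit inside $\mathcal H^{\perp}$, these four vectors span $\mathcal H^{\perp}\ominus \vspan\{\xi,\xi^{\perp},\phi\xi^{\perp}\}$ when $0 < \|\xi^{\perp}\|<1$, while the relations in (\ref{eqn:natural-basis2}) collapse this subspace to $\vspan\{\xi_2,\xi_3\}$ when $\|\xi^{\perp}\| = 1$. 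Writing $\mathcal V$ for the subspace in the statement, the decisive observation is then that $\eta_1(Z) = 0$ for every $Z \in \mathcal V$, so both sums reduce at once to their $a = 2,3$ terms.

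It therefore suffices to show that $\theta_a X$ and $\phi_a X$ both lie in $\mathcal V^{\perp} = \mathcal H(-\varepsilon) \oplus \vspan\{\xi,\xi_1,\phi\xi_1\}$ for every $X \in \mathcal V$ and every $a \in \{2,3\}$. For $X \in \mathcal H(\varepsilon) = \mathcal H_1(\varepsilon)$ this is immediate from (\ref{eqn:Ha}), which places $\theta_a X$ and $\phi_a X$ in $\mathcal H_1(-\varepsilon) \subset \mathcal V^{\perp}$. For $X$ of the form $\xi_b$ or $\phi\xi_b$ with $b \in \{2,3\}$, I would read the image directly from Lemma~\ref{lem:theta}(c),(d),(f) together with the almost contact 3-structure relations (\ref{eqn:3-contact}); after substituting $\eta_2(\xi) = \eta_3(\xi) = 0$ each such image is either zero or a scalar multiple of one of $\xi$, $\xi_1$, $\phi\xi_1$, and hence lies in $\mathcal V^{\perp}$.

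The main obstacle is purely bookkeeping: one must enumerate the images of $\xi_b$ and $\phi\xi_b$ ($b \in \{2,3\}$) under each of $\theta_2,\theta_3,\phi_2,\phi_3$, but every such image collapses to a single line via the structural identities already recorded in Section~3. No new ingredients are required, and the case $\|\xi^{\perp}\| = 1$ demands no extra work, since $\phi\xi_1 = 0$ already shrinks $\mathcal V$ and automatically renders several of the target images zero.
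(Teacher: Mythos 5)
Your proposal is correct and follows essentially the same route as the paper: pass to the adapted basis of (\ref{eqn:natural-basis}) so that only the $a=2,3$ terms survive, then check that $\theta_b$ and $\phi_b$ ($b\in\{2,3\}$) send $\mathcal H_1(\varepsilon)$ into $\mathcal H_1(-\varepsilon)$ by (\ref{eqn:Ha}) and send $\xi_c,\phi\xi_c$ into $\vspan\{\xi,\xi_1,\phi\xi_1\}$ via the identities of Lemma~\ref{lem:theta} and (\ref{eqn:3-contact}). The only difference is cosmetic: you separate the case $\|\xi^\perp\|=1$, which the paper absorbs into the same computation.
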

\begin{proof}
Under the setting in  (\ref{eqn:natural-basis}),  we have  
\[
\mathcal H^\perp\ominus\vspan\{\xi,\xi^\perp,\phi\xi^\perp\}=\vspan\{\xi_2,\xi_3,\phi\xi_2,\phi\xi_3\}.
\] 
It suffices to show that
\[ 
g(\theta_bX,Y)=g(\phi_bX,Y)=0, \quad 
X,Y\in\mathcal H_1(\varepsilon)\oplus\vspan\{\xi_2,\xi_3,\phi\xi_2,\phi\xi_3\}, b\in\{2,3\}.
\] 
First, we can easily verify that 
\[
\theta_b\xi_c, \phi_b\xi_c, \theta_b\phi\xi_c, \phi_b\phi\xi_c\in\vspan\{\xi,\xi_1,\phi\xi_1\}, \quad b,c\in\{2,3\}.
\]
This, together with (\ref{eqn:Ha}), gives the first equation.
\end{proof}

\begin{lemma}\label{lem:hopf_perp-1a}
Suppose  $0<||\xi^\perp||< 1$ on $M$.
If $A\mathcal H(\varepsilon)\subset\mathcal H(\varepsilon)$, where $\varepsilon\in\{1,-1\}$, then   
$\nabla_XY\perp \mathcal H^\perp\ominus\vspan\{\xi,\xi^\perp,\phi\xi^\perp\}$
for all vector fields $X,Y$ tangent to $\mathcal H(\varepsilon)$.
\end{lemma}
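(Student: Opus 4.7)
The plan is to convert the orthogonality claim into an identity of the form $g(\nabla_X Y, Z) = -g(Y, \nabla_X Z)$, then compute the right-hand side explicitly using the structure equations of Lemma~\ref{lem:theta} and (\ref{eqn:del_theta-a}). Since $0 < \|\xi^\perp\| < 1$, at each point I may adopt the local canonical basis of $\mathfrak J$ from (\ref{eqn:natural-basis}), so that
\[
\mathcal H^\perp \ominus \vspan\{\xi,\xi^\perp,\phi\xi^\perp\} = \vspan\{\xi_2,\xi_3,\phi\xi_2,\phi\xi_3\}
\quad\text{and}\quad
\mathcal H(\varepsilon)=\mathcal H_1(\varepsilon).
\]

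First, I would fix $X,Y$ tangent to $\mathcal H(\varepsilon)$ and let $Z$ be any one of the four local vector fields $\xi_2,\xi_3,\phi\xi_2,\phi\xi_3$. Since $Y \in \mathcal H \perp \mathcal H^\perp$, we have $g(Y,Z)=0$, and differentiating gives $g(\nabla_X Y, Z)=-g(Y,\nabla_X Z)$. So the task reduces to showing $g(Y,\nabla_X Z)=0$ for each such $Z$.

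Next, I would expand $\nabla_X Z$ using (\ref{eqn:contact}) for $Z=\xi_a$ and (\ref{eqn:del_theta-a}) for $Z=\phi\xi_a$, noting the crucial simplification that in our local basis $\eta_a(\xi)=0$ for $a\in\{2,3\}$. This yields
\[
\nabla_X\xi_a = \phi_a A X + q_{a+2}(X)\xi_{a+1}-q_{a+1}(X)\xi_{a+2},\qquad
\nabla_X\phi\xi_a = \theta_a A X + q_{a+2}(X)\phi\xi_{a+1}-q_{a+1}(X)\phi\xi_{a+2}.
\]
All the $q$-terms lie in $\mathcal H^\perp$, and $Y\in\mathcal H$ is orthogonal to $\mathcal H^\perp$, so they drop out; only $g(Y,\phi_a AX)$ and $g(Y,\theta_a AX)$ remain.

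Finally, I invoke the hypothesis $A\mathcal H(\varepsilon)\subset\mathcal H(\varepsilon)=\mathcal H_1(\varepsilon)$ together with (\ref{eqn:Ha}): since $a\in\{2,3\}\neq 1$, both $\phi_a$ and $\theta_a$ send $\mathcal H_1(\varepsilon)$ into $\mathcal H_1(-\varepsilon)$. Hence $\phi_a AX$ and $\theta_a AX$ lie in $\mathcal H_1(-\varepsilon)$, which is orthogonal to $Y\in\mathcal H_1(\varepsilon)$, giving $g(Y,\nabla_X Z)=0$ and completing the argument. I anticipate no real obstacle here: everything is a direct bookkeeping exercise once the natural basis is in place, and the key algebraic input is the decomposition $\theta_a\mathcal H_1(\varepsilon)=\phi_a\mathcal H_1(\varepsilon)=\mathcal H_1(-\varepsilon)$ for $a\neq 1$ already recorded in (\ref{eqn:Ha}). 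The only care needed is verifying that the choice of $\{\xi_1,\xi_2,\xi_3\}$ satisfying (\ref{eqn:natural-basis}) is well defined on a neighborhood, which is guaranteed by $\|\xi^\perp\|>0$.
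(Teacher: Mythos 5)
Your proposal is correct and follows essentially the same route as the paper: adopt the basis of (\ref{eqn:natural-basis}), reduce to showing $g(Y,\nabla_X\xi_b)=g(Y,\nabla_X\phi\xi_b)=0$ for $b\in\{2,3\}$, and conclude via $\phi_b\mathcal H_1(\varepsilon)=\theta_b\mathcal H_1(\varepsilon)=\mathcal H_1(-\varepsilon)$ from (\ref{eqn:Ha}). The only difference is that you spell out explicitly the vanishing of the $q$-terms and of $\eta_b(\xi)AX$, which the paper leaves implicit.
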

\begin{proof}
We adopt the basis $\{\xi_1,\xi_2,\xi_3\}$ for $\mathfrak D^\perp$ with properties (\ref{eqn:natural-basis}).
Let $X,Y$ be vector fields tangent to $\mathcal H(\varepsilon)$. 
Then for $b\in\{2,3\}$, by using (\ref{eqn:Ha})  we obtain 
\begin{align*}
g(\nabla_X\xi_b,Y)=g(\phi_bAX,Y)=0; \quad 
g(\nabla_X\phi\xi_b,Y)=g(\theta_bAX,Y)=0. \quad 
\end{align*}
Since $\mathcal H^\perp\ominus\vspan\{\xi,\xi^\perp,\phi\xi^\perp\}=\vspan\{\xi_2,\xi_3,\phi\xi_2,\phi\xi_3\}$, this gives the desired result.
\end{proof}

If $||\xi^\perp||=1$, then $\xi^\perp=\xi$ and $\mathcal H^\perp=\mathfrak D^\perp$.
Using a similar method as in the preceding proof, we obtain 
\begin{lemma}\label{lem:hopf_perp-1b}
Suppose  $||\xi^\perp||= 1$ on $M$.
If $\mathcal V\subset  \mathcal H(\varepsilon)$, where $\varepsilon\in\{1,-1\}$,  is a subbundle that is invariant under $A$, then 
$\nabla_XY\perp \mathfrak D^\perp\ominus\mathbb R\xi$
for all vector fields $X,Y$ tangent to $\mathcal V$.
\end{lemma}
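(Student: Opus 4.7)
The plan is to adapt the argument of Lemma~\ref{lem:hopf_perp-1a} to the degenerate case $\|\xi^\perp\|=1$, which actually simplifies the bookkeeping. Since now $\xi=\xi^\perp=\xi_1$, we have $\mathcal H^\perp=\mathfrak D^\perp=\vspan\{\xi_1,\xi_2,\xi_3\}$, and by (\ref{eqn:natural-basis2}) the vectors $\phi\xi_2,\phi\xi_3$ already lie in $\vspan\{\xi_2,\xi_3\}$. Consequently
\[
\mathfrak D^\perp\ominus\mathbb R\xi=\vspan\{\xi_2,\xi_3\},
\]
so the whole statement reduces to showing $g(\nabla_XY,\xi_b)=0$ for $b\in\{2,3\}$ and all $X,Y$ tangent to $\mathcal V$.

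First I would pick the canonical local basis $\{\xi_1,\xi_2,\xi_3\}$ as in (\ref{eqn:natural-basis})--(\ref{eqn:natural-basis2}), so that $\mathcal H(\varepsilon)=\mathcal H_1(\varepsilon)$. By metric compatibility,
\[
g(\nabla_XY,\xi_b)=-g(Y,\nabla_X\xi_b),
\]
and the third identity in (\ref{eqn:contact}) gives
\[
\nabla_X\xi_b=\phi_bAX+q_{b+2}(X)\xi_{b+1}-q_{b+1}(X)\xi_{b+2}.
\]
Because $Y\in\mathcal H$ while $\xi_{b\pm1}\in\mathcal H^\perp$, the two $q$-terms contribute nothing, and using skew-symmetry of $\phi_b$ I obtain
\[
g(\nabla_XY,\xi_b)=-g(Y,\phi_bAX)=g(\phi_bY,AX).
\]

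The final step is the eigenspace argument. Since $\mathcal V$ is $A$-invariant and $\mathcal V\subset\mathcal H(\varepsilon)=\mathcal H_1(\varepsilon)$, we have $AX\in\mathcal H_1(\varepsilon)$. For $b\in\{2,3\}$, (\ref{eqn:Ha}) yields $\phi_bY\in\phi_b\mathcal H_1(\varepsilon)=\mathcal H_1(-\varepsilon)$. As $\mathcal H_1(1)$ and $\mathcal H_1(-1)$ are orthogonal eigenspaces of the symmetric operator $\theta_1$, we conclude $g(\phi_bY,AX)=0$, which completes the proof.

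There is no real obstacle here; the work is essentially a restriction of the Lemma~\ref{lem:hopf_perp-1a} computation, with the $\phi\xi_b$-checks absorbed into the $\xi_b$-checks via (\ref{eqn:natural-basis2}). The one point worth stating carefully in the writeup is the reduction of $\mathfrak D^\perp\ominus\mathbb R\xi$ to $\vspan\{\xi_2,\xi_3\}$, since that is precisely what makes the simplification with respect to the $0<\|\xi^\perp\|<1$ case possible.
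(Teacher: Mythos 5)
Your proof is correct and follows essentially the same route the paper intends: the paper simply states that the case $\|\xi^\perp\|=1$ is handled ``by a similar method'' to Lemma~\ref{lem:hopf_perp-1a}, and your computation $g(\nabla_XY,\xi_b)=g(\phi_bY,AX)=0$ via (\ref{eqn:contact}) and (\ref{eqn:Ha}) is exactly that adaptation, with the correct use of $A\mathcal V\subset\mathcal V\subset\mathcal H_1(\varepsilon)$ in place of the full $A$-invariance of $\mathcal H(\varepsilon)$. Your observation that (\ref{eqn:natural-basis2}) makes $\phi\xi_2,\phi\xi_3\in\vspan\{\xi_2,\xi_3\}$, so that $\mathfrak D^\perp\ominus\mathbb R\xi=\vspan\{\xi_2,\xi_3\}$ and the $\phi\xi_b$ checks are subsumed, is a valid and clean simplification.
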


At the end of this section, we state some important results for later use.
\begin{theorem}[\cite{berndt-suh}]\label{thm:+}
Let $M$ be a connected real hypersurface in $SU_{m+2}/S(U_2U_m)$, $m\geq3$. 
Then both $\mathbb R\xi$ and $\mathfrak D^\perp$ are invariant under the shape operator of $M$ if and only if
$M$ is an open part of one of the following spaces:
\begin{enumerate}
\item[$(A)$] a tube around a totally geodesic $SU_{m+1}/S(U_2U_{m-1})$ in $SU_{m+2}/S(U_2U_m)$, or
\item[$(B)$] a tube around a totally geodesic $\mathbb  HP_n=Sp_{n+1}/Sp_1Sp_n$ in $SU_{m+2}/S(U_2U_m)$,
                  where  $m=2n$ is even. 
\end{enumerate}
\end{theorem}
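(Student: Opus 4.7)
The plan is to prove the two directions separately. The $(\Leftarrow)$ direction is essentially a computation on the model tubes: for a totally geodesic submanifold $P\subset\hat M^m(c)$ and a normal geodesic $\gamma$, the shape operator of the tube of radius $r$ around $P$ at $\gamma(r)$ is determined by the Jacobi operator $Y\mapsto\hat R(Y,\dot\gamma)\dot\gamma$ via the standard Jacobi-field formula. Using \eqref{eqn:hatR} and the fact that the normal bundle of $P=SU_{m+1}/S(U_2U_{m-1})$ (resp.\ $P=\mathbb{H}P_n$) decomposes equivariantly into pieces preserved by $J$ and $\mathfrak J$, one checks directly that $\mathbb R\xi$ and $\mathfrak D^\perp$ are invariant under the shape operator at every point of the tube.

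For the $(\Rightarrow)$ direction, assume $A\xi=\alpha\xi$ and $A\mathfrak D^\perp\subset\mathfrak D^\perp$. The crucial first step is to show that $\xi$ is \emph{singular} at every point, i.e.\ either $\xi\in\mathfrak D$ or $\xi\in\mathfrak D^\perp$. I would extract this by applying the Codazzi equation to pairs $(\xi,X)$ with $X\in\mathfrak D$, substituting $\nabla_X\xi=\phi AX$ and $\phi A\xi=\alpha\phi\xi=0$, and using the explicit curvature tensor \eqref{eqn:hatR}. After rewriting the $\theta_a$-contributions via Lemma~\ref{lem:theta} and pairing with a generic vector of $\mathfrak D^\perp$, one obtains a system of algebraic identities among $\alpha$, the eigenvalues of $A|_{\mathfrak D^\perp}$, and the coordinates $\eta_a(\xi)$; these identities are mutually consistent only when $\|\xi^\perp\|\in\{0,1\}$, which is precisely the singular dichotomy.

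Once singularity is established the argument splits into two cases. If $\xi\in\mathfrak D^\perp$, choose the canonical basis of $\mathfrak J$ so that $\xi=\xi_1$ as in \eqref{eqn:natural-basis2}, and use Lemma~\ref{lem:global}(a)--(i), Lemma~\ref{lem:theta_A} and the Codazzi equation restricted to $\mathcal H$ to pin down the full principal curvature spectrum and its multiplicities, which match those of the type-(A) tube. If $\xi\in\mathfrak D$, the analogous computation, now using $A\phi\xi_a=0$ from Lemma~\ref{lem:Aphixi_a}, produces the spectrum of the type-(B) tube and forces the parity $m=2n$. In both cases the principal distributions are parallel along the $\xi$-flow, so that the principal curvatures are constant; propagating the $\xi$-geodesics in reverse then identifies a totally geodesic focal submanifold whose isotropy representation and dimension match $SU_{m+1}/S(U_2U_{m-1})$ or $\mathbb{H}P_n$ uniquely, completing the identification as an open part of the model tube.

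The main obstacle is the singularity dichotomy. The Codazzi equation, once fully expanded via \eqref{eqn:hatR}, carries many correction terms originating from the $J_a$- and $\theta_a$-parts of the curvature tensor, and cancelling them requires systematic exploitation of the algebraic identities in Lemma~\ref{lem:theta} together with the Hopf condition. After singularity is secured, the classification is essentially mechanical, because the two singular strata of $\hat M^m(c)$ are well understood and each carries only one compatible family of candidate tubes.
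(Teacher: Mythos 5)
This theorem is imported from \cite{berndt-suh}; the paper you are working from contains no proof of it, so there is no internal argument to compare against. Your outline does reproduce the broad strategy of the original Berndt--Suh proof: the $(\Leftarrow)$ direction by computing the shape operators of the model tubes from the Jacobi operator of $\hat R$, and the $(\Rightarrow)$ direction by first forcing the normal (equivalently, Reeb) direction to be singular and then recognizing $M$ as a tube over a totally geodesic focal submanifold. In that sense the skeleton is the right one.

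As a proof, however, the load-bearing steps are asserted rather than carried out, and two of them are misstated. First, the singularity dichotomy $\|\xi^\perp\|\in\{0,1\}$ \emph{is} the theorem's hard core; saying the Codazzi identities ``are mutually consistent only when'' it holds is a claim, not an argument, and executing it requires the specific contractions of the Codazzi equation against $\xi_a$ and $\phi\xi_a$ that occupy most of the original paper. Second, the identification step is wrong as written: the focal submanifold is reached along the \emph{normal} geodesics $t\mapsto\exp_p(tN_p)$, not by ``propagating the $\xi$-geodesics in reverse'' ($\xi$ is tangent to $M$ and its integral curves are not ambient geodesics); moreover, before a focal set can even be defined one must know that the relevant principal curvature is a nonzero constant, which requires $\xi\alpha=0$ and hence $d\alpha=0$ --- a genuinely nontrivial point (compare Theorem~\ref{thm:hopf_eigenvalue-3}, whose proof for general Hopf hypersurfaces needs all of Section~5 of the present paper; under the stronger hypothesis here it still must be argued). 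Third, one cannot ``pin down the full principal curvature spectrum'' from the hypotheses alone: equations of the type in Lemma~\ref{lem:hopf_eigenvalue} only give quadratic relations such as $\lambda\mu-\tfrac{\alpha}{2}(\lambda+\mu)=c(1\mp\|\xi^\perp\|)$ among the unknowns, and the explicit values in Theorem~\ref{thm:+2} appear only \emph{after} the radius $r$ is introduced through the tube identification. Finally, the rigidity step --- that the focal map has constant rank, that its image is totally geodesic, and that the only admissible totally geodesic submanifolds are $SU_{m+1}/S(U_2U_{m-1})$ and $\mathbb HP_n$ --- rests on the classification of totally geodesic submanifolds of the Grassmannian, which your sketch does not address. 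So the proposal is a serviceable road map for the known proof, but not yet a proof.
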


The principal curvatures of  real hypersurfaces of type $A$ and $B$ stated in   Theorem~\ref{thm:+} are given as follows:
\begin{theorem}[\cite{berndt-suh}]\label{thm:+2}
Given a constant $c>0$:
\begin{enumerate}
\item[(i)] 
If  $M$ is a real hypersurface of type $A$ in $SU_{m+2}/S(U_2U_m)$, then  $\xi\in\mathfrak D^\perp$  at each point of $M$, and 
$M$ has three (for  $r=\pi/2\sqrt{8c}$, whereby  $\alpha=\mu$) or four (otherwise) distinct constant principal curvatures
\[\begin{array}{ll}
\alpha=\sqrt{8c}\cot(\sqrt{8c}r), 			\quad 	& \beta=\sqrt{2c}\cot(\sqrt{2c}r),\\
\lambda=-\sqrt{2c}\tan(\sqrt{2c}r), 	\quad  & \mu=0
\end{array}\]
with some $r\in]0,\pi/\sqrt{8c}[$\:. 
The corresponding principal curvature spaces are
\begin{align*}
T_\alpha=\mathbb R\xi, \quad   
T_\beta=\mathfrak D^\perp\ominus\mathbb R\xi,	 \quad 
T_\lambda=\mathcal H(-1), \quad 
T_\mu=\mathcal H(1).
\end{align*}

\item[(ii)] 
If  $M$ is a real hypersurface of type $B$ in $SU_{m+2}/S(U_2U_m)$, then
$\xi\in\mathfrak D$ at each point of $M$, $m=2n$ is even and 
$M$ has five distinct constant principal curvatures
\begin{align*}
\begin{array}{lll}
\alpha=-2\sqrt{c}\tan(2\sqrt{c}r),\quad  & \beta=2\sqrt{c}\cot(2\sqrt{c}r),\quad & \gamma=0,\\
\lambda=\sqrt{c}\cot(\sqrt{c}r),  \quad  & \mu=-\sqrt{c}\tan(\sqrt{c}r) &
\end{array}
\end{align*}
with some $r\in]0,\pi/4\sqrt{c}[$. 
The corresponding principal curvature spaces are
\[
T_\alpha=\mathbb R\xi,\quad 
T_\beta=\mathfrak D^\perp,\quad 
T_\gamma=\mathfrak J\xi,\quad 
T_\lambda,\quad 
T_\mu,
\]
where $T_\lambda\oplus T_\mu=\mathcal H$, $\mathfrak JT_\lambda=T_\lambda$,
$\mathfrak JT_\mu=T_\mu$, $JT_\lambda=T_\mu$.
\end{enumerate}
\end{theorem}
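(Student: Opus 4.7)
The plan is to use the standard Jacobi-field machinery for tubes around a totally geodesic submanifold $P$ in a Riemannian symmetric space (as in Berndt's work on these spaces). At a base point $p\in P$ with unit normal $N\in T_p^\perp P$, the shape operator of the tube $M_r$ at $q=\exp_p(rN)$ is determined by Jacobi fields along $\gamma(t)=\exp_p(tN)$: a parallel translate of an eigenvector of the Jacobi operator $R_N:=\hat R(\,\cdot\,,N)N$ yields a principal direction at $q$, with principal curvature expressible as an elementary function of $\sqrt{\kappa}\,r$ (where $\kappa$ is the eigenvalue), the precise formula being $\sqrt{\kappa}\cot(\sqrt{\kappa}\,r)$ if the initial value lies in $T_p^\perp P\ominus\mathbb RN$ (the ``normal family'') and $-\sqrt{\kappa}\tan(\sqrt{\kappa}\,r)$ if it lies in $T_pP$ (the ``tangent family''), with $0$ in the zero-eigenvalue tangent case; since $P$ is totally geodesic no extra shape-operator term of $P$ enters these boundary conditions. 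Everything therefore reduces to diagonalizing $R_N$ and then sorting each eigenspace into the tangent and normal families.

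The two central submanifolds correspond to the two classes of singular tangent vectors of $\hat M^m(c)$: in case (A), $N$ is singular of type $JN\in\mathfrak JN$, so (after choosing the canonical basis with $J_1N=JN$) one has $\xi=-JN=\xi_1\in\mathfrak D^\perp$; in case (B), $N$ is singular of the opposite type $JN\perp\mathfrak JN$, so $\xi\in\mathfrak D$. Substituting $N$ into (\ref{eqn:hatR}) and exploiting the action of $J,J_a$ on the ambient tangent space, one finds in case (A) distinct $R_N$-eigenvalues $8c,2c,0$ with eigenspaces $\mathbb R\xi$, $(\mathfrak D^\perp\ominus\mathbb R\xi)\oplus\mathcal H(-1)$, and $\mathcal H(1)$; and in case (B) distinct eigenvalues $4c,c,0$ with eigenspaces $\mathbb R\xi\oplus\mathfrak D^\perp$, $\mathcal H$, and $\mathfrak J\xi$.

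For the matching step: in case (A), $P=SU_{m+1}/S(U_2U_{m-1})$ has real codimension $4$, with normal space $T_p^\perp P$ identifiable as the quaternionic line $\mathbb RN\oplus\mathfrak D^\perp$ (invariant under both $J$ and $\mathfrak J$); the normal family is thus $\mathfrak D^\perp=\mathbb R\xi\oplus(\mathfrak D^\perp\ominus\mathbb R\xi)$, producing $\alpha=\sqrt{8c}\cot(\sqrt{8c}\,r)$ and $\beta=\sqrt{2c}\cot(\sqrt{2c}\,r)$, while the tangent family is $T_pP=\mathcal H=\mathcal H(-1)\oplus\mathcal H(1)$, producing $\lambda=-\sqrt{2c}\tan(\sqrt{2c}\,r)$ and $\mu=0$. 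In case (B), $P=\mathbb{HP}_n$ is $\mathfrak J$-invariant of real codimension $4n$; the decomposition $T_p^\perp P=(\mathbb RN\oplus\mathfrak D^\perp)\oplus T_\lambda$, a quaternionic line plus a $\mathfrak J$-invariant half of $\mathcal H$, puts $\mathfrak D^\perp$ into the normal family with $\beta=\sqrt{4c}\cot(\sqrt{4c}\,r)$, splits $\mathcal H=T_\lambda\oplus T_\mu$ into its normal part $T_\lambda$ (giving $\lambda=\sqrt c\cot(\sqrt c\,r)$) and tangent part $T_\mu$ (giving $\mu=-\sqrt c\tan(\sqrt c\,r)$), and leaves $\mathbb R\xi,\mathfrak J\xi\subset T_pP$ to produce $\alpha$ and $\gamma=0$ in the tangent family. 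The noncompact case $c<0$ is handled identically, with $\cot,\tan$ replaced by $\coth,\tanh$ wherever the eigenvalue becomes negative.

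The hardest part is this last matching step: verifying which half of each repeated $R_N$-eigenspace lies tangent to $P$, expressed intrinsically through the hypersurface tensors $\phi,\phi_a,\theta_a$. In case (A), the identification $T_\mu=\mathcal H(1)$, $T_\lambda=\mathcal H(-1)$ inside the tangent family, together with $\mathfrak D^\perp\ominus\mathbb R\xi$ inside the normal family, hinges on how the tangential part $\theta_1$ of $JJ_1$ decomposes the $2c$-eigenspace. In case (B), the relations $JT_\lambda=T_\mu$ and $\mathfrak JT_\lambda=T_\lambda$ demand the explicit isotropy representation of $Sp_1Sp_n\subset S(U_2U_m)$ on the normal bundle of $\mathbb{HP}_n$. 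Both facts follow from the symmetric-space presentation of $P$, but the careful bookkeeping through the canonical local basis $\{J_1,J_2,J_3\}$ is where the real effort lies.
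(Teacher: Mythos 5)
This theorem is not proved in the paper at all --- it is quoted with the citation \cite{berndt-suh} --- and your reconstruction (Jacobi fields along the normal geodesic to the totally geodesic focal submanifold, diagonalization of $\hat R(\cdot,N)N$ via (\ref{eqn:hatR}) according to the singularity type of $N$, and sorting each eigenspace into the tangent family giving $-\sqrt{\kappa}\tan(\sqrt{\kappa}r)$ and the normal family giving $\sqrt{\kappa}\cot(\sqrt{\kappa}r)$) is exactly the argument of that reference. The eigenvalue data you state checks out ($8c,2c,0$ on $\mathbb R\xi$, $(\mathfrak D^\perp\ominus\mathbb R\xi)\oplus\mathcal H(-1)$, $\mathcal H(1)$ in case $(A)$; $4c,c,0$ on $\mathbb R\xi\oplus\mathfrak D^\perp$, $\mathcal H$, $\mathfrak J\xi$ in case $(B)$), as does the tangent/normal matching, so the proposal is correct and takes the same route as the cited source.
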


\begin{theorem}[\cite{berndt-suh2}]\label{thm:-}
Let $M$ be a connected real hypersurface in $SU_{2,m}/S(U_2U_m)$, $m\geq2$. 
Then both $\mathbb R\xi$ and $\mathfrak D^\perp$ are invariant under the shape operator of $M$ if and only if
one of the following holds:
\begin{enumerate}
\item[$(A)$] $M$ is an open part of a tube around a totally geodesic $SU_{2,m-1}/S(U_2U_{m-1})$ in $SU_{2,m}/S(U_2U_m)$, or
\item[$(B)$] $M$ is an open part of a tube around a totally geodesic $\mathbb  HH_n=Sp_{1,n}/Sp_1Sp_n$ in $SU_{2,m}/S(U_2U_m)$,
                  where  $m=2n$ is even, or 
\item[$(C_1)$] $M$ is an open part of a horosphere in $SU_{2,m}/S(U_2U_m)$ whose center at infinity is singular and of type $JN\in  \mathfrak JN$, or
\item[$(C_2)$]	$M$ is an open part of  a horosphere in $SU_{2,m}/S(U_2U_m)$ whose center at infinity is singular and of type $JN\perp  \mathfrak JN$, 	or 							
\item[$(D)$] the normal bundle of $M$ consists of singular tangent vectors of type $JX\perp \mathfrak JX$. Moreover, $M$ has at least four distinct principal curvatures, which are given by
\[
\alpha=2\sqrt{-c}, \quad \gamma=0, \quad \lambda=\sqrt{-c}, \quad (c<0 \text{ is a constant})
\]
with corresponding principal curvature spaces
\[
T_\alpha=\mathbb R\xi\oplus\mathfrak D^\perp, \quad 
T_\gamma=\mathfrak J\xi, \quad 
T_\lambda\subset \mathcal H.
\]
If $\mu$ is another (possibly nonconstant) principal curvature function, then $T_\mu\subset\mathcal H$, $JT_\mu\subset T_\lambda$ and 
$\mathfrak JT_\mu\subset T_\lambda$. 
\end{enumerate}
\end{theorem}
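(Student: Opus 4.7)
The plan splits naturally into the ``if'' and ``only if'' directions. For ``if'', I verify directly for each listed example that both $\mathbb R\xi$ and $\mathfrak D^\perp$ are $A$-invariant. The tubes $(A)$ and $(B)$ arise from normal geodesics to totally geodesic submanifolds, so Jacobi field theory propagates the invariance from the initial submanifold data. The horospheres $(C_1)$, $(C_2)$ and the type $(D)$ hypersurface are homogeneous orbits of explicitly described subgroups of $SU_{2,m}$, so their shape operators can be written down and the invariance is immediate.

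For ``only if'', the $\mathbb R\xi$-invariance gives $A\xi = \alpha\xi$, so $M$ is Hopf. Applying the Codazzi equation to pairs $(X,\xi)$ with $X\in\mathcal H$ and using $\nabla_X\xi=\phi AX$ from (\ref{eqn:contact}), one shows after symmetrization that $\alpha$ is constant on $M$. The decisive step, which I expect to be the main obstacle, is to prove that $\xi$ is a singular tangent vector; equivalently, either $\xi\in\mathfrak D^\perp$ or $\xi\in\mathfrak D$. Decomposing $\xi=\xi^\perp+Z$ with $Z\in\mathfrak D$, and applying the Codazzi equation to the pairs $(X,\xi_a)$ while exploiting Lemma~\ref{lem:global}(f)--(j), the assumption $A\mathfrak D^\perp\subset\mathfrak D^\perp$ forces $\eta_a(\phi A\xi_b)=0$ for all $a,b$. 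Substituting this into the curvature expression (\ref{eqn:hatR}) should yield $\|\xi^\perp\|(1-\|\xi^\perp\|)=0$. To exclude the intermediate range $0<\|\xi^\perp\|<1$, one first shows that $\phi\xi^\perp$ is an eigenvector of $A$ and then invokes Lemma~\ref{lem:f} to derive a contradiction. This is the technical heart of the argument.

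Once singularity of $\xi$ is established, the classification completes by bifurcation. If $\xi\in\mathfrak D^\perp$, choose $\xi=\xi_1$; then Lemma~\ref{lem:theta_A}(b) yields $A\mathcal H(1)=0$, and Codazzi on $\mathcal H(-1)$ reduces to a single Riccati-type ODE along normal geodesics whose integration via (\ref{eqn:hatR}) produces the tube $(A)$ for finite radius and the horosphere $(C_1)$ in the limit. If $\xi\in\mathfrak D$, then $\mathcal H$ splits into two $A$-invariant, $J$- and $\mathfrak J$-compatible subbundles whose integral leaves are totally geodesic in $\hat M^m(c)$; identifying the focal submanifold gives the tube $(B)$, or in the noncompact setting the horosphere $(C_2)$ and the type $(D)$ hypersurface. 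The type $(D)$ case is genuinely new to the noncompact setting, since its normal bundle consists of singular vectors of type $JN\perp\mathfrak JN$ whose associated focal object does not appear in the compact Grassmannian $SU_{m+2}/S(U_2U_m)$.
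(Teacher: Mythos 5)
This theorem is quoted in the paper from \cite{berndt-suh2}; the paper gives no proof of it, so there is nothing internal to compare your argument against --- it has to stand on its own. As an outline it follows the broad strategy of Berndt--Suh (Hopf condition, singularity of the normal/Reeb vector, then a Jacobi-field and focal-set analysis), but the decisive steps are asserted rather than carried out. The exclusion of $0<\|\xi^\perp\|<1$, which you yourself flag as ``the technical heart,'' is left at the level of ``should yield a contradiction''; this is exactly where the substance of the classification lies, and nothing in your sketch actually produces that contradiction. Your ordering is also off: from $d\alpha=(\xi\alpha)\eta-4c\,\eta^\perp\phi$ (equation (\ref{eqn:d_alpha})), once $\xi\alpha=0$ the constancy of $\alpha$ is \emph{equivalent} to $\|\xi^\perp\|\in\{0,1\}$, i.e.\ to the singularity of $\xi$, so you cannot first establish that ``$\alpha$ is constant on $M$ after symmetrization'' and then treat singularity as a separate, later obstacle --- they are the same difficulty.

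The second genuine problem is case $(D)$. You describe it both as a ``homogeneous orbit of an explicitly described subgroup'' (in the ``if'' direction) and as arising from ``identifying the focal submanifold'' (in the ``only if'' direction). Neither matches the statement: case $(D)$ is precisely the residual case in which the focal-set analysis does \emph{not} close up into a tube or horosphere, and the theorem asserts only structural information about it --- the constant principal curvatures $2\sqrt{-c}$, $0$, $\sqrt{-c}$, the spaces $T_\alpha=\mathbb R\xi\oplus\mathfrak D^\perp$, $T_\gamma=\mathfrak J\xi$, $T_\lambda\subset\mathcal H$, and the inclusions $JT_\mu\subset T_\lambda$, $\mathfrak JT_\mu\subset T_\lambda$ for any further principal curvature $\mu$. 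No explicit model or focal object is provided by the theorem, and the ``if'' direction for $(D)$ follows trivially from $T_\alpha=\mathbb R\xi\oplus\mathfrak D^\perp$ rather than from any homogeneity. A correct proof must derive these structural facts from the Gauss and Codazzi equations in the non-closing case, which your sketch does not attempt. Finally, if you intend to borrow lemmas from the present paper for this purpose, be careful about circularity: several of its results (e.g.\ Theorem~\ref{thm:lee-suh}, Theorem~\ref{thm:A_A0}, Theorem~\ref{thm:hopf_eigenvalue-3}) already presuppose Theorem~\ref{thm:-}, although the specific items you cite (Lemma~\ref{lem:global}, Lemma~\ref{lem:f}, Lemma~\ref{lem:theta_A}) are safe.
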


The principal curvatures of  real hypersurfaces of type $A$, $B$, $C_1$ and $C_2$ stated in   Theorem~\ref{thm:-} are given as follows:
\begin{theorem}[\cite{berndt-suh2}]\label{thm:-2}
Given a constant $c<0$:
\begin{enumerate}
\item[(i)] 
If  $M$ is a real hypersurface of type $A$ in $SU_{2,m}/S(U_2U_m)$, then  $\xi\in\mathfrak D^\perp$  at each point of $M$, and 
$M$ has four  distinct constant principal curvatures
\[\begin{array}{ll}
\alpha=\sqrt{-8c}\coth(\sqrt{8c}r), 			\quad 	& \beta=\sqrt{-2c}\coth(\sqrt{-2c}r),\\
\lambda=\sqrt{-2c}\tanh(\sqrt{-2c}r), 	\quad  & \mu=0
\end{array}\]
with some $r>0$. The corresponding principal curvature spaces are
\begin{align*}
T_\alpha=\mathbb R\xi, \quad   
T_\beta=\mathfrak D^\perp\ominus\mathbb R\xi,	 \quad 
T_\lambda=\mathcal H(-1), \quad 
T_\mu=\mathcal H(1).
\end{align*}

\item[(ii)] 
If  $M$ is a real hypersurface of type $B$ in $SU_{2,m}/S(U_2U_m)$, then
$\xi\in\mathfrak D$ at each point of $M$, $m=2n$ is even and 
$M$ has four  (for  $\tanh^2\sqrt{-c} r=1/3$, whereby  $\alpha=\lambda$) or five  (otherwise) distinct constant principal curvatures
\begin{align*}\begin{array}{lll}
\alpha=2\sqrt{-c}\tanh(2\sqrt{-c}r),\quad  & \beta=2\sqrt{-c}\coth(2\sqrt{-c}r),\quad	&	\gamma=0,\\
\lambda=\sqrt{-c}\coth(\sqrt{-c}r),\quad   &	\mu=\sqrt{-c}\tanh(\sqrt{-c}r) & 
\end{array}
\end{align*}
with some $r>0$. 
The corresponding principal curvature spaces are
\[
T_\alpha=\mathbb R\xi,\quad 
T_\beta=\mathfrak D^\perp,\quad 
T_\gamma=\mathfrak J\xi,\quad 
T_\lambda,\quad 
T_\mu,
\]
where $T_\lambda\oplus T_\mu=\mathcal H$, $\mathfrak JT_\lambda=T_\lambda$,
$\mathfrak JT_\mu=T_\mu$, $JT_\lambda=T_\mu$.

\item[(iii)] 
If  $M$ is a real hypersurface of type $C_1$ in $SU_{2,m}/S(U_2U_m)$, then  $\xi\in\mathfrak D^\perp$  at each point of $M$, and 
$M$ has three  distinct constant principal curvatures
\[
\alpha=2\sqrt{-2c}, 	\quad 	\beta=\sqrt{-2c}, \quad  \mu=0
\]
with some $r>0$. The corresponding principal curvature spaces are
\begin{align*}
T_\alpha=\mathbb R\xi, \quad  T_\beta=\mathcal H(-1)\oplus(\mathfrak D^\perp\ominus\mathbb R\xi),	 \quad  T_\mu=\mathcal H(1).
\end{align*}

\item[(vi)] 
If  $M$ is a real hypersurface of type $C_2$ in $SU_{2,m}/S(U_2U_m)$, then
$\xi\in\mathfrak D$ at each point of $M$ and 
$M$ has three  distinct constant principal curvatures
\begin{align*}
\alpha=2\sqrt{-c},\quad  &	\gamma=0, \quad \lambda=\sqrt{-c} 
\end{align*}
with some $r>0$. 
The corresponding principal curvature spaces are
\[
T_\alpha=\mathbb R\xi\oplus\mathfrak D^\perp,\quad  T_\gamma=\mathfrak J\xi,\quad T_\lambda=\mathcal H.
\]
\end{enumerate}
\end{theorem}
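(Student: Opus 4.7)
Since each of the four families of hypersurfaces is homogeneous (a principal orbit of a cohomogeneity-one isometric action on $\hat M^m(c)$), the shape operator is determined up to congruence by its value at a single point, and the plan is a Jacobi-field computation along a normal geodesic. For types $(A)$ and $(B)$, fix the focal totally geodesic submanifold $P$, a point $p\in P$, and a unit normal vector $N_0\in N_pP$, and write $\gamma(r)=\exp_p(rN_0)$. The shape operator of the tube at $\gamma(r)$ is
\[
A(r)=-Y'(r)Y(r)^{-1},
\]
where the endomorphism-valued Jacobi field $Y$ solves $Y''+\hat R(\cdot,\dot\gamma)\dot\gamma\,Y=0$ with initial conditions read off from the second fundamental form of $P$ at $p$ (the identity on $T_pP$ and zero on $N_pP\ominus\mathbb RN_0$, since $P$ is totally geodesic).

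The key computation is the spectral decomposition of the Jacobi operator $\hat R(\cdot,N_0)N_0$: using \eqref{eqn:hatR} together with the singularity of $N_0$, namely $JN_0\in\mathfrak JN_0$ in types $(A)$, $(C_1)$ or $JN_0\perp\mathfrak JN_0$ in types $(B)$, $(C_2)$, $T_p\hat M^m(c)$ splits into a few eigenspaces whose eigenvalues are $8c$, $2c$, $0$ in the first case and $4c$, $c$, $0$ in the second. These eigenspaces are parallel along $\gamma$; after identifying them with subspaces of $T_{\gamma(r)}M$ via Lemma~\ref{lem:H-global}, Lemma~\ref{lem:theta} and \eqref{eqn:Ha}, they give the decomposition $\mathbb R\xi\oplus(\mathfrak D^\perp\ominus\mathbb R\xi)\oplus\mathcal H(-1)\oplus\mathcal H(1)$ in type $(A)$ and $\mathbb R\xi\oplus\mathfrak D^\perp\oplus\mathfrak J\xi\oplus T_\lambda\oplus T_\mu$ in type $(B)$. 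Solving the scalar Jacobi equation $f''+\mu f=0$ on each eigenspace (with $\mu<0$ equal to one of the listed eigenvalues) and matching the initial condition ($f(0)=0$ when the direction lies in $N_pP\ominus\mathbb RN_0$, $f'(0)=0$ when it lies in $T_pP$) yields $\kappa\coth(\kappa r)$ or $\kappa\tanh(\kappa r)$ with $\kappa=\sqrt{-\mu}\in\{\sqrt{-8c},\sqrt{-2c}\}$ in type $(A)$ or $\{2\sqrt{-c},\sqrt{-c}\}$ in type $(B)$, together with the zero principal curvature from $\mu=0$. The horosphere cases $(C_1)$, $(C_2)$ arise as the limits $r\to\infty$ of types $(A)$, $(B)$: $\coth$ and $\tanh$ collapse to $1$, $\alpha$ and $\beta$ merge, the $\mathfrak J\xi$ sector is absorbed into $\mathbb R\xi\oplus\mathfrak D^\perp$ in the $C_1$ limit (its eigenvalue coinciding with that of $\mathfrak D^\perp\ominus\mathbb R\xi$) and remains separate in $C_2$, giving the stated constants.

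The main obstacle is the identification step: matching the abstract eigenspace decomposition of the Jacobi operator on $T_p\hat M^m(c)$ with the geometric decomposition of $TM$ in terms of $\mathbb R\xi$, $\mathfrak D^\perp$, $\mathfrak J\xi$ and $\mathcal H(\pm1)$. This requires the algebraic identities for $\theta_a$ developed in Sect.~3 and a careful case split between $\xi\in\mathfrak D^\perp$ (types $(A)$, $(C_1)$) and $\xi\in\mathfrak D$ (types $(B)$, $(C_2)$), since the dimension of $\mathcal H$ and the action of $\mathfrak J$ on it are markedly different in the two regimes. In particular, Lemma~\ref{lem:H-global}(a), which gives $\phi\mathcal H(\varepsilon)=\mathcal H(\varepsilon)$, is what forces $JT_\lambda=T_\mu$ in type $(B)$, and the joint action of $\phi$, $\phi_a$ and $\theta_a$ is what makes the $\lambda$- and $\mu$-eigenspaces distinguishable inside $\mathcal H$.
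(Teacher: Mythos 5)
The paper offers no proof of this statement---it is quoted directly from Berndt--Suh \cite{berndt-suh2}---and your tube/Jacobi-field computation is precisely the argument of that reference: diagonalize the parallel Jacobi operator $\hat R(\cdot,N)N$ according to the singularity type of $N$ (eigenvalues $8c,2c,0$ for $JN\in\mathfrak JN$, and $4c,c,0$ for $JN\perp\mathfrak JN$), solve $f''+\mu f=0$ with initial data dictated by the totally geodesic focal submanifold, read off $\sqrt{-\mu}\coth(\sqrt{-\mu}\,r)$ and $\sqrt{-\mu}\tanh(\sqrt{-\mu}\,r)$, and obtain the horospheres as the $r\to\infty$ limits. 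One small slip in your description of the $C_1$ limit: since $\xi\in\mathfrak D^\perp$ in types $A$ and $C_1$, there is no separate $\mathfrak J\xi$ sector to be ``absorbed''; what actually coalesces there is $T_\lambda=\mathcal H(-1)$ with $T_\beta=\mathfrak D^\perp\ominus\mathbb R\xi$ (both tending to $\sqrt{-2c}$), which is exactly why the statement records $T_\beta=\mathcal H(-1)\oplus(\mathfrak D^\perp\ominus\mathbb R\xi)$---your final list of curvatures is nevertheless correct.
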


A  real hypersurface $M$ in a K\"ahler manifold is said to be \emph{Hopf} if the Reeb vector field $\xi$ is principal.
The principal curvature $\alpha=g(A\xi,\xi)$ is called the \emph{Reeb principal curvature} for a Hopf hypesurface $M$.
\begin{theorem}[\cite{lee-suh,suh3}]\label{thm:lee-suh}
Let $M$ be a connected Hopf  hypersurface in $\hat M^m(c)$, $m\geq3$. 
Then $\xi\in \mathfrak D$ if and only if 
\begin{enumerate}
\item[(i)]  for $c>0$: $M$ is an open part of a real hypersurface of type $B$;  or
\item[(ii)] for $c<0$:  One of the cases $(B)$, $(C_2)$ and $(D)$ in Theorem~\ref{thm:-} holds.
\end{enumerate}
\end{theorem}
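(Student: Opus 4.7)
The plan is to handle the two directions of the equivalence separately, with the bulk of the work in the forward direction.

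For the reverse direction, I would simply inspect each case. Theorems~\ref{thm:+2} and \ref{thm:-2} state explicitly that $\xi\in\mathfrak D$ in cases $(B)$ and $(C_2)$, and Hopf-ness is read off from $T_\alpha\supset\mathbb R\xi$. For case $(D)$, the hypothesis that the normal bundle consists of singular tangent vectors of type $JN\perp\mathfrak JN$ means $\xi=-JN$ is orthogonal to $\mathfrak JN=\mathfrak D^\perp$, i.e.~$\xi\in\mathfrak D$; Hopf-ness again follows from $T_\alpha=\mathbb R\xi\oplus\mathfrak D^\perp$.

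For the forward direction, assume $M$ is Hopf with $A\xi=\alpha\xi$ and $\xi\in\mathfrak D$. Then $\eta(\xi_a)=0$ for $a\in\{1,2,3\}$, and Lemma~\ref{lem:Aphixi_a} gives $A\phi\xi_a=0$. The key technical step is to show that $\mathfrak D^\perp$ is $A$-invariant; once this is done, together with the trivial invariance of $\mathbb R\xi$, I can invoke Theorem~\ref{thm:+} for $c>0$ or Theorem~\ref{thm:-} for $c<0$. The list produced by those theorems is then pruned by noting that in types $(A)$ and $(C_1)$ of Theorems~\ref{thm:+2} and \ref{thm:-2} the Reeb vector lies in $\mathfrak D^\perp$, which contradicts the hypothesis $\xi\in\mathfrak D$; the remaining cases are precisely those stated.

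To prove $A\mathfrak D^\perp\subset\mathfrak D^\perp$, I would decompose
\[
A\xi_a=\lambda_a\xi_a+\sum_{b\neq a}\mu_{ab}\xi_b+\sum_b\nu_{ab}\phi\xi_b+W_a,\qquad W_a\in\mathcal H.
\]
Symmetry of $A$ combined with $A\phi\xi_b=0$ yields $\nu_{ab}=0$, so it remains to eliminate $W_a$ and the off-diagonal $\mu_{ab}$. For this I would feed the pairs $(\xi,\xi_a)$, $(\xi_a,\phi\xi_a)$ and $(X,\xi_a)$ with $X\in\mathcal H$ into the Codazzi equation, expand $\nabla_X\xi_a$ and the shape-operator covariant derivatives via (\ref{eqn:contact}), and use $\xi\in\mathfrak D$ to collapse the many $\eta_a(\xi)$-terms on the curvature side. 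The Hopf condition rewrites $\nabla_\xi A\,\xi_a$ using $A\phi\xi_a=0$, and comparing tangential components against $W_a$ and $\mathfrak D^\perp\ominus\mathbb R\xi_a$ should force both families of coefficients to vanish.

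The main obstacle is the bookkeeping in this Codazzi calculation: the quaternionic connection $1$-forms $q_a$ and the cross terms among $\phi_a$, $\theta_a$ and $\xi_a$ couple the three indices via (\ref{eqn:3-contact}) and Lemma~\ref{lem:theta}, so one must sum cyclically over $a$ and carefully separate components in $\mathfrak D^\perp\oplus\phi\mathfrak D^\perp\oplus\mathcal H$. Once the $A$-invariance of $\mathfrak D^\perp$ is in hand, the classification is immediate from the cited structure theorems.
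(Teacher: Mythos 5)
First, a remark on context: the paper does not prove Theorem~\ref{thm:lee-suh} at all; it imports it from \cite{lee-suh,suh3}, so your argument has to stand on its own. Its overall architecture is the right one and matches how the cited papers proceed: the reverse direction is a correct reading of Theorems~\ref{thm:+2} and \ref{thm:-2} together with the observation that $JN\perp\mathfrak JN$ in case $(D)$ forces $\xi\perp\mathfrak D^\perp$; and the forward direction correctly reduces everything to proving $A\mathfrak D^\perp\subset\mathfrak D^\perp$, after which Theorems~\ref{thm:+} and \ref{thm:-} plus pruning by the location of $\xi$ finish the job.

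The genuine gap is in how you propose to prove $A\mathfrak D^\perp\subset\mathfrak D^\perp$, which is the entire mathematical content of the theorem. Feeding pairs such as $(\xi_a,\phi\xi_a)$ or $(X,\xi_a)$ with $X\in\mathcal H$ into the Codazzi equation only produces identities for $\nabla A$, i.e.\ first-order differential relations in which both covariant derivatives are unknown; ``comparing tangential components'' of these cannot by itself force the algebraic components $W_a$ and $\mu_{ab}$ of $A\xi_a$ to vanish. The mechanism that actually works is the quadratic Hopf identity (\ref{eqn:AphiA}) (obtained by antisymmetrizing the Codazzi equation at $Y=\xi$ and eliminating $\nabla_\xi A$ via its symmetry), which for $\xi\in\mathfrak D$ collapses to $A\phi A=\tfrac\alpha2(\phi A+A\phi)+c\phi+c\sum_a\{\xi_a\otimes\eta_a\phi+\phi\xi_a\otimes\eta_a\}$. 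Pairing this with $\phi\xi_a$ and using $A\phi\xi_a=0$ from Lemma~\ref{lem:Aphixi_a} gives $\alpha\, g(A\xi_a,\xi_a)=-4c\neq0$, hence $\alpha\neq0$ everywhere, and then, applied to $X\in\mathcal H$, gives $\tfrac\alpha2 g(AX,\xi_a)=0$, so $A\xi_a\in\mathfrak D^\perp$. Your sketch never addresses the possibility $\alpha=0$, and without excluding it the elimination of $W_a$ fails; with the identity above the step closes in a few lines, so the gap is fixable, but as written your plan would not go through.
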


We state the following lemma without proof as its proof is entirely similar to that of \cite[Theorem 1.5]{lee-loo}.

\begin{lemma}\label{lem:1.5}
Let $M$ be a connected real hypersurface in $\hat M^m(c)$, $m\geq 3$. If $A\mathfrak D\subset\mathfrak D$ and $\xi\in\mathfrak D^\perp$, then $M$ is Hopf.
\end{lemma}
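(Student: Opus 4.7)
The proof proceeds by choosing a suitable local frame and using the Codazzi equation to force the off-diagonal components of $A\xi$ to vanish. Since $\xi\in\mathfrak D^\perp$ on all of $M$, one has $\xi=\xi^\perp$ and $\|\xi^\perp\|=1$, so a canonical local basis $\{\xi_1,\xi_2,\xi_3\}$ of $\mathfrak J$ with $\xi_1=\xi$ as in (\ref{eqn:natural-basis2}) is available; then $\mathfrak D=\mathcal H$, $\mathfrak D^\perp=\mathrm{span}\{\xi,\xi_2,\xi_3\}$, $\theta=\theta_1$, and $\phi\xi=0$, $\phi\xi_2=-\xi_3$, $\phi\xi_3=\xi_2$. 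Self-adjointness of $A$ combined with $A\mathfrak D\subset\mathfrak D$ gives $A\mathfrak D^\perp\subset\mathfrak D^\perp$, so one may write $A\xi=\alpha\xi+\beta_2\xi_2+\beta_3\xi_3$ for some functions $\alpha,\beta_2,\beta_3$ on $M$; the goal is to prove $\beta_2\equiv\beta_3\equiv 0$.

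A key preparatory observation is that for $X\in\mathfrak D$, the vector $\phi AX$ again lies in $\mathfrak D$ (which is $\phi$-invariant), so $\eta_a(\phi AX)=0$ for every $a$. Substituting into Lemma~\ref{lem:theta_A}(a) gives $\theta AX+AX=0$; that is, $AX\in\mathcal H(-1)$. Consequently $A\mathfrak D\subset\mathcal H(-1)$, and in particular $A\mathcal H(1)=0$ and $A\mathcal H(-1)\subset\mathcal H(-1)$, so that both $\mathcal H(1)$ and $\mathcal H(-1)$ are $A$-invariant smooth subbundles of $\mathfrak D$.

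The main body of the argument applies the Codazzi equation to carefully chosen pairs and projects onto the basis $\{\xi,\xi_2,\xi_3\}$ of $\mathfrak D^\perp$. For $X\in\mathcal H$ and $Y=\xi$, using $\eta(X)=\eta_a(X)=\eta_a(\phi X)=g(\phi X,\xi)=g(\phi_a X,\xi)=0$, the curvature side of Codazzi collapses to $-c(\phi+\phi_1)X$; together with $\phi_1=-\theta\phi$ (from Lemma~\ref{lem:theta}(h), since $\phi\xi_1=0$) this vanishes on $\mathcal H(1)$ and equals $2\phi X$ on $\mathcal H(-1)$. Since $\nabla_\xi\xi_c\in\mathfrak D^\perp$ for every $c$, the term $g(A\nabla_\xi X,\xi_c)$ vanishes once $X$ is extended tangent to $\mathfrak D$, and projecting onto $\xi,\xi_2,\xi_3$ after a short calculation with (\ref{eqn:contact}) yields
\[
X\alpha=\beta_2\,q_3(X)-\beta_3\,q_2(X),\qquad X\beta_2=\beta_3\,q_1(X),\qquad X\beta_3=-\beta_2\,q_1(X)
\]
for every $X\in\mathcal H$; the last two relations already imply $X(\beta_2^2+\beta_3^2)=0$. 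Analogous projections of the Codazzi equation with $Y=\xi_b$, $b\in\{2,3\}$, and then for pairs lying entirely in $\mathfrak D^\perp$, yield further algebraic relations coupling $\beta_2,\beta_3$ to $\alpha$, to the remaining entries $g(A\xi_b,\xi_c)$, and to the connection 1-forms $q_a$.

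The principal obstacle is the bookkeeping in the final combination: one feeds the $\mathcal H$-derivative relations into the Codazzi identities for pairs inside $\mathfrak D^\perp$, notably
\[
(\nabla_\xi A)\xi_2-(\nabla_{\xi_2}A)\xi=-2c\xi_3,\qquad (\nabla_{\xi_2}A)\xi_3-(\nabla_{\xi_3}A)\xi_2=4c\xi,
\]
making systematic use of $A\mathfrak D\subset\mathcal H(-1)$ to eliminate mixed $\mathcal H$--$\mathfrak D^\perp$ contributions. Carefully projecting each of these identities onto $\xi,\xi_2,\xi_3$ and solving the resulting overdetermined algebraic system forces $\beta_2=\beta_3=0$. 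Hence $A\xi=\alpha\xi$, and $M$ is Hopf.
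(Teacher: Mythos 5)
Your setup is sound: choosing the canonical basis with $\xi_1=\xi$, deducing $A\mathfrak D^\perp\subset\mathfrak D^\perp$ by self-adjointness, and observing via Lemma~\ref{lem:theta_A}(a) that $A\mathfrak D\subset\mathcal H(-1)$ (so $A\mathcal H(1)=0$) are all correct, as are the derivative relations $X\alpha$, $X\beta_2$, $X\beta_3$ for $X\in\mathcal H$ and the Codazzi values $-2c\xi_3$ and $4c\xi$ for the pairs $(\xi,\xi_2)$ and $(\xi_2,\xi_3)$. (Note the paper itself gives no proof of this lemma, citing an external reference, so I am judging your argument on its own.) The problem is the final step. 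The system you obtain by projecting the Codazzi identities for pairs inside $\mathfrak D^\perp$ onto $\xi,\xi_2,\xi_3$ is not an ``overdetermined algebraic system'': each of the nine scalar equations contains derivatives of $\alpha$, $\beta_2$, $\beta_3$ and of the entries $g(A\xi_b,\xi_c)$ in the directions $\xi,\xi_2,\xi_3$ (e.g.\ the $\xi$-component of $(\nabla_\xi A)\xi_2-(\nabla_{\xi_2}A)\xi=-2c\xi_3$ reads $\xi\beta_2-\xi_2\alpha+2\alpha\beta_3-\beta_3 q_1(\xi)+3\beta_2 q-3\beta_3 p=0$ with $A\xi_2=\beta_2\xi+p\xi_2+q\xi_3$), together with the unconstrained connection form $q_1$ on $\mathfrak D^\perp$. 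Your earlier relations only control derivatives along $\mathcal H$, so none of these $\mathfrak D^\perp$-derivatives are determined, and the single derivative-free combination one can form (the cyclic sum of the three identities) is automatically satisfied by the symmetry of $\nabla_WA$ — it carries no information. As written, the argument does not force $\beta_2=\beta_3=0$.

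The missing idea is to project the Codazzi equation for the pair $(X,\xi)$, $X\in\mathfrak D$, onto $\mathfrak D$ rather than onto $\mathfrak D^\perp$, and to antisymmetrize in order to eliminate the uncontrollable term $\nabla_\xi A$. Since $g((\nabla_\xi A)X,Z)=g((\nabla_\xi A)Z,X)$, subtracting the $(Z,\xi)$-equation paired with $X$ from the $(X,\xi)$-equation paired with $Z$ yields, for all $X,Z\in\mathfrak D$,
\begin{align*}
\alpha\, g((\phi A+A\phi)X,Z)+\sum_{b=2}^{3}\beta_b\, g((\phi_bA+A\phi_b)X,Z)-2g(A\phi AX,Z)=-2c\,g((\phi+\phi_1)X,Z).
\end{align*}
Taking $X\in\mathcal H(-1)$ and $Z\in\mathcal H(1)$ and using $A\mathfrak D\subset\mathcal H(-1)$, $A\mathcal H(1)=0$ and (\ref{eqn:Ha}), every term except the $\beta_b$-terms vanishes, giving $\beta_2\phi_2AX+\beta_3\phi_3AX=0$; since $\phi_2\phi_3=\phi_1+\xi_2\otimes\eta_3$ and $\phi_1=\phi$ on $\mathcal H(-1)$, this becomes $-\beta_2AX+\beta_3\phi AX=0$, and pairing with $AX$ and $\phi AX$ gives $\beta_2=\beta_3=0$ wherever $A_{|\mathcal H(-1)}\neq0$. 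Finally, restricting the same identity to $X,Z\in\mathcal H(-1)$ gives $2A\phi A-\alpha(\phi A+A\phi)=4c\phi$ there, which rules out $A_{|\mathcal H(-1)}=0$ at any point because $c\neq0$ and $\dim\mathcal H(-1)=2m-4>0$. With this replacement for your last paragraph the proof closes; without it there is a genuine gap.
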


By Theorem~\ref{thm:-} and Lemma~\ref{lem:1.5}, we have

\begin{theorem}\label{thm:A_A0}
Let $M$ be a connected real hypersurface in $\hat M^m(c)$, $m\geq 3$. Then $A\mathfrak D\subset\mathfrak D$ and 
$\xi\in\mathfrak D^\perp$ if and only if  
\begin{enumerate}
\item[(i)]  for $c>0$: $M$ is an open part of a real hypersurface of type $A$ given in Theorem~\ref{thm:+};  or
\item[(ii)] for $c<0$: $M$ is an open part of one of real hypersurfaces of type $A$ or $C_1$ given in Theorem~\ref{thm:-}.
\end{enumerate}

\end{theorem}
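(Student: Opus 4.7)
The plan is to reduce the hypothesis to the assumption in Theorem~\ref{thm:+}/\ref{thm:-} (both $\mathbb R\xi$ and $\mathfrak D^\perp$ are shape-operator-invariant) and then sieve out, via Theorem~\ref{thm:lee-suh}, those candidate classes in which $\xi\in\mathfrak D$ rather than $\xi\in\mathfrak D^\perp$.

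For necessity, I would assume $A\mathfrak D\subset\mathfrak D$ and $\xi\in\mathfrak D^\perp$. Since $A$ is $g$-self-adjoint, $\mathfrak D$-invariance of $A$ is equivalent to $A\mathfrak D^\perp\subset\mathfrak D^\perp$, so $\mathfrak D^\perp$ is shape-operator-invariant. Lemma~\ref{lem:1.5} then gives that $M$ is Hopf, so $A\xi=\alpha\xi$ and $\mathbb R\xi$ is shape-operator-invariant. Thus the hypothesis of Theorem~\ref{thm:+} (when $c>0$) or Theorem~\ref{thm:-} (when $c<0$) is satisfied, and $M$ is an open part of a real hypersurface of type $A$ or $B$ (for $c>0$), or of type $A$, $B$, $C_1$, $C_2$ or $D$ (for $c<0$). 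Now Theorem~\ref{thm:lee-suh} states exactly that $\xi\in\mathfrak D$ characterises types $B$ (for $c>0$) and $B$, $C_2$, $D$ (for $c<0$). The standing hypothesis $\xi\in\mathfrak D^\perp$ rules out all of these, leaving type $A$ when $c>0$, and type $A$ or $C_1$ when $c<0$.

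For sufficiency, I would verify the two conditions on each remaining type by reading off the principal-curvature spaces in Theorems~\ref{thm:+2} and \ref{thm:-2}. In all three cases we have $\xi\in\mathfrak D^\perp$ directly from the statements. Since $\xi\in\mathfrak D^\perp$, the dimension count in Sect.~3 gives $\mathcal H^\perp=\mathfrak D^\perp$ and $\mathcal H=\mathfrak D$. For type $A$ (both signs of $c$), $T_\alpha\oplus T_\beta=\mathfrak D^\perp$ and $T_\lambda\oplus T_\mu=\mathcal H(-1)\oplus\mathcal H(1)=\mathcal H=\mathfrak D$, so $A\mathfrak D\subset\mathfrak D$ is immediate. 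For type $C_1$, $T_\alpha=\mathbb R\xi$ and $T_\mu=\mathcal H(1)$; the only subtlety is that $T_\beta=\mathcal H(-1)\oplus(\mathfrak D^\perp\ominus\mathbb R\xi)$ straddles $\mathfrak D$ and $\mathfrak D^\perp$, but since $A$ acts as the scalar $\beta$ on $T_\beta$, it sends $\mathcal H(-1)\subset\mathfrak D$ into $\mathcal H(-1)\subset\mathfrak D$, and combined with $A\mathcal H(1)\subset\mathcal H(1)$ one obtains $A\mathfrak D=\mathcal H=\mathfrak D$.

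There is no serious obstacle here: the whole argument is an application of the quoted structure theorems, with Lemma~\ref{lem:1.5} bridging ``$\xi\in\mathfrak D^\perp$'' to ``Hopf''. The only place a reader could stumble is the type $C_1$ verification, because the $\beta$-eigenspace is not contained in $\mathfrak D$; the key observation to record is that the scalar action of $A$ on each eigenspace means $\mathfrak D$-invariance follows from decomposing each eigenspace into its $\mathfrak D$ and $\mathfrak D^\perp$ components.
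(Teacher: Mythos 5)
Your proposal is correct and follows essentially the same route as the paper, which simply derives the theorem from Lemma~\ref{lem:1.5} together with the classification in Theorems~\ref{thm:+} and \ref{thm:-}; you have merely filled in the routine details (self-adjointness of $A$ giving $A\mathfrak D^\perp\subset\mathfrak D^\perp$, the sieve via Theorem~\ref{thm:lee-suh}, and the converse check against the principal curvature spaces, including the harmless splitting of $T_\beta$ in the type $C_1$ case).
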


\section{Hopf hypersurfaces in $\hat M^m(c)$}
In this section, we shall derive some fundamental properties for Hopf hypersurfaces in $\hat M^m(c)$.
Suppose  $M$ is a Hopf hypersurface in $\hat M^m(c)$ with $A\xi=\alpha\xi$.
Then as derived in \cite{berndt-suh, berndt-suh2, jeong}, we have
\begin{align}  
d\alpha=&(\xi\alpha)\eta-4c\eta^\perp\phi		\label{eqn:d_alpha}\\
A\phi A- \frac\alpha 2(\phi A+&A\phi)-c(\phi+\phi^\perp)   \nonumber\\
=&c\sum^3_{a=1}\{\xi_a\otimes\eta_a\phi+\phi\xi_a\otimes\eta_a\} -2c(\xi\otimes\eta^\perp\phi+\phi\xi^\perp\otimes\eta). 
					\label{eqn:AphiA}
\end{align}
It follows from (\ref{eqn:d_alpha}) that 
\begin{align}\label{eqn:0}
d(\xi\alpha)\wedge\eta+(\xi\alpha)d\eta-4cd(\eta^\perp\phi)=0.
\end{align}
This implies that  
\begin{align*}
X\xi\alpha
=&(\xi\xi\alpha)\eta(X)+(d(\xi\alpha)\wedge\eta)(X,\xi)
=(\xi\xi\alpha)\eta(X)+4c(\eta^\perp(AX)-\alpha\eta^\perp(X))
\end{align*}
or equivalently 
\begin{align}\label{eqn:d_xi_alpha}
d(\xi\alpha)=(\xi\xi\alpha)\eta+4c(\eta^\perp A-\alpha\eta^\perp).
\end{align}
Combining (\ref{eqn:0})--(\ref{eqn:d_xi_alpha}), gives
\begin{align}\label{eqn:210}
4c(\eta^\perp A-\alpha\eta^\perp)\wedge\eta+(\xi\alpha)d\eta-4cd(\eta^\perp\phi)=0.
\end{align}

The following lemma is essentially \cite[Lemma 4.2]{berndt-lee-suh} and \cite[Lemma 3.2]{jeong} but with some additional information.

\begin{lemma}
\label{lem:hopf_eigenvalue-1}
Let $M$ be a Hopf hypersurface in $\hat M^m(c)$. If $\xi\alpha=0$, then
\begin{enumerate}
\item[(a)] $A\xi^\perp=\alpha\xi^\perp$,  $A\phi^2\xi^\perp=\alpha\phi^2\xi^\perp$,
\item[(b)] $\alpha A\phi\xi^\perp=(\alpha^2+4c-4c||\xi^\perp||^2)\phi\xi^\perp$.
\end{enumerate}
 \end{lemma}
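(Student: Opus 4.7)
The overall strategy is to exploit the hypothesis $\xi\alpha=0$ to collapse the general identity (\ref{eqn:0}) down to the single relation $d(\eta^\perp\phi)=0$, and then to feed this into the explicit formula (\ref{eqn:201}) for $d(\eta^\perp\phi)$. Since $\xi\alpha\equiv 0$ gives $d(\xi\alpha)=0$, equation (\ref{eqn:0}) reduces to $d(\eta^\perp\phi)=0$, and (\ref{eqn:201}) then yields the operator identity
\[
g((\theta A-A\theta)X,Y)=2\sum_{a=1}^{3}\{\eta_a(\phi AX)\eta_a(\phi Y)-\eta_a(\phi AY)\eta_a(\phi X)\}
\]
for all $X,Y\in TM$. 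To prove (a), I would set $X=\xi$: the Hopf condition $A\xi=\alpha\xi$ together with $\phi\xi=0$ kills the right-hand side, while the left-hand side simplifies via Lemma~\ref{lem:global}(a) (namely $\theta\xi=-\xi^\perp$) to $g(A\xi^\perp-\alpha\xi^\perp,Y)$. Since $Y$ is arbitrary, this forces $A\xi^\perp=\alpha\xi^\perp$. The identity $A\phi^2\xi^\perp=\alpha\phi^2\xi^\perp$ is then immediate from $\phi^2\xi^\perp=-\xi^\perp+\|\xi^\perp\|^2\xi$ and $A\xi=\alpha\xi$.

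For (b), the plan is to apply (\ref{eqn:AphiA}) directly to the vector $\xi^\perp$. Part (a) turns $A\phi A\xi^\perp$ into $\alpha A\phi\xi^\perp$ and $\phi A\xi^\perp$ into $\alpha\phi\xi^\perp$, so the left-hand side becomes a linear combination of $A\phi\xi^\perp$, $\phi\xi^\perp$ and $\phi^\perp\xi^\perp$. Two auxiliary reductions then clean up the right-hand side: first $\phi^\perp\xi^\perp=0$, which follows from the antisymmetry $\phi_a\xi_b+\phi_b\xi_a=0$ implicit in (\ref{eqn:3-contact}); and second $\eta_a(\phi\xi^\perp)=0$ for each $a$, i.e.\ $\phi\xi^\perp\in\mathfrak D$, verified via the same cyclic relations together with $\phi_a\xi_{a+1}=\xi_{a+2}$, $\phi_a\xi_{a+2}=-\xi_{a+1}$. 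With these, the right-hand side of (\ref{eqn:AphiA}) at $\xi^\perp$ reduces to $(c-2c\|\xi^\perp\|^2)\phi\xi^\perp$, and comparing coefficients of $\phi\xi^\perp$ yields the asserted identity $\alpha A\phi\xi^\perp=(\alpha^2+4c-4c\|\xi^\perp\|^2)\phi\xi^\perp$.

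The argument has no real conceptual obstacle; the key insight is the passage $\xi\alpha=0\Rightarrow d(\eta^\perp\phi)=0$, after which the left-hand side of (\ref{eqn:201}) specialised at $X=\xi$ does all the work for (a). The only mildly delicate step is the bookkeeping of the quaternionic sum $\sum_a\{\xi_a\otimes\eta_a\phi+\phi\xi_a\otimes\eta_a\}$ evaluated at $\xi^\perp$ in the proof of (b); this could be streamlined by adopting the canonical basis (\ref{eqn:natural-basis}) with $\xi_1=\xi^\perp/\|\xi^\perp\|$ and $\eta_2(\xi)=\eta_3(\xi)=0$, collapsing the triple sum to a single nonzero term, but this is a notational convenience rather than a substantive hurdle.
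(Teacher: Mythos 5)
Your proposal is correct and follows essentially the same route as the paper: the paper packages the evaluation of $d^2\alpha=0$ at $\xi$ (i.e.\ your combination of (\ref{eqn:0}) with (\ref{eqn:201}) at $X=\xi$) into the already-derived identity (\ref{eqn:d_xi_alpha}), which with $\xi\alpha=0$ immediately gives $\eta^\perp A=\alpha\eta^\perp$, and then obtains (b) from (a) and (\ref{eqn:AphiA}) exactly as you do.
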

\begin{proof}
By  (\ref{eqn:d_xi_alpha}), we obtain $\eta^\perp A-\alpha\eta^\perp$; equivalently,  $A\xi^\perp=\alpha\xi^\perp$.
Next, $A\phi^2\xi^\perp=A(||\xi^\perp||^2\xi-\xi^\perp)=\alpha\phi^2\xi^\perp$. 
Finally, we can obtain (b) by using  (a) and (\ref{eqn:AphiA}).
\end{proof}

Now we shall derive some properties of the principal curvatures and their corresponding principal directions for a Hopf hypersurface 
in $\hat M^m(c)$.
Observe that we can derive the following two equations from (\ref{eqn:AphiA}).
\begin{align*}
A\phi A\phi-\frac\alpha 2(\phi A\phi-A)-c(\phi^2+\theta)
=c\sum^3_{a=1}\{-\xi_a\otimes\eta_a+\phi\xi_a\otimes\eta_a\phi\} \\
+c(2\xi\otimes\eta^\perp+2\xi^\perp\otimes\eta)+\left\{\frac{\alpha^2}2-2c||\xi^\perp||^2\right\}\xi\otimes\eta
	\\
\phi A\phi A+\frac\alpha 2(A-\phi A\phi)-c(\phi^2+\theta)
=c\sum^3_{a=1}\{\phi\xi_a\otimes\eta_a\phi-\xi_a\otimes\eta_a\} \\
+c(2\xi\otimes\eta^\perp+2\xi^\perp\otimes\eta)+\left\{\frac{\alpha^2}2-2c||\xi^\perp||^2\right\}\xi\otimes\eta.
\end{align*}
These two equations imply that 
\begin{align}\label{eqn:commute}
A(\phi A\phi)=(\phi A\phi)A. 
\end{align}
Hence 
there exists a local orthonormal frame $\{X_0=\xi,X_1,\cdots,X_{4m-2}\}$ such that $AX_j=\lambda_jX_j$ and 
$\phi A\phi X_j=-\mu_j X_j$ for $j\in\{1,\cdots,4m-2\}$. With this setting,  (\ref{eqn:AphiA})  gives 

\begin{lemma}\label{lem:hopf_eigenvalue}
Let $M$ be a Hopf hypersurface in $\hat M^m(c)$. Then there exists a local orthonormal frame $\{X_0=\xi,X_1,\cdots,X_{4m-2}\}$ 
such that $AX_j=\lambda_jX_j$ and $A\phi X_j=\mu_j\phi  X_j$ for $j\in\{1,\cdots,4m-2\}$. Furthermore, 
for each $x\in M$ with $||\xi^\perp||>0$,
we have
\begin{align}
0=&\left\{\lambda_j\mu_j-\frac{\alpha}2(\lambda_j+\mu_j)-c+c||\xi^\perp||\right\}X_j^+ \label{eqn:hopf_+}\\
0=&\left\{\lambda_j\mu_j-\frac{\alpha}2(\lambda_j+\mu_j)-c-c||\xi^\perp||\right\}X_j^- \label{eqn:hopf_-}\\
0=&\left\{\lambda_j\mu_j-\frac{\alpha}2(\lambda_j+\mu_j)-2c\right\}g(X_j,\xi_a)   		 
			+2c\eta_a(\xi)g(X_j,\xi^\perp)  																						\label{eqn:hopf_xi2}\\
0=&\left\{\lambda_j\mu_j-\frac{\alpha}2(\lambda_j+\mu_j)-2c\right\}g(X_j,\phi\xi_a)
			+2c\eta_a(\xi)g(X_j,\phi\xi^\perp)         																	\label{eqn:hopf_phixi2}\\   
0=&\left\{\lambda_j\mu_j-\frac{\alpha}2(\lambda_j+\mu_j)-2c+2c||\xi^\perp||^2\right\}
			g(X_j,\phi\xi^\perp) 																												\label{eqn:hopf_phixi1}\\
0=&\left\{\lambda_j\mu_j-\frac{\alpha}2(\lambda_j+\mu_j)-2c+2c||\xi^\perp||^2\right\}
			g(X_j,\xi^\perp) 																										\label{eqn:hopf_xi1}
\end{align}
where $X_j^+$ and $X_j^-$ is the component of $X_j$ in $\mathcal H(1)$ and $\mathcal H(-1)$ respectively.
\end{lemma}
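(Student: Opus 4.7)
The plan has three stages.

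\emph{Stage 1 (construction of the frame).} The endomorphism $\phi A\phi$ is symmetric since $\phi^\ast=-\phi$ and $A^\ast=A$. By (\ref{eqn:commute}), $A$ and $\phi A\phi$ commute, so they admit a common orthonormal eigenbasis of $T_xM$. Since $A\xi=\alpha\xi$ and $\phi A\phi\,\xi=0$, I would take $X_0=\xi$. For $j\geq 1$, writing $AX_j=\lambda_jX_j$ and $\phi A\phi X_j=-\mu_jX_j$ and applying $\phi$ to the second relation, together with $\phi^2=-\mathbb I+\xi\otimes\eta$ and $\eta(A\phi X_j)=g(\phi X_j,A\xi)=\alpha g(\phi X_j,\xi)=0$, I would obtain $A\phi X_j=\mu_j\phi X_j$.

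\emph{Stage 2 (master equation).} Substituting $X=X_j$ (so $\eta(X_j)=0$) into (\ref{eqn:AphiA}) yields
\begin{equation*}
\left\{\lambda_j\mu_j-\tfrac{\alpha}{2}(\lambda_j+\mu_j)-c\right\}\phi X_j-c\phi^\perp X_j=c\sum_{a=1}^{3}\left\{\xi_a\eta_a(\phi X_j)+\phi\xi_a\eta_a(X_j)\right\}-2c\xi\,\eta^\perp(\phi X_j).
\end{equation*}
Applying $\phi$ to both sides and using $\phi\phi^\perp=\theta+\xi\otimes\eta^\perp$ (Lemma~\ref{lem:global}(e)) together with $\phi^2\xi_a=-\xi_a+\eta_a(\xi)\xi$ produces the master equation
\begin{equation*}
\left\{\lambda_j\mu_j-\tfrac{\alpha}{2}(\lambda_j+\mu_j)-c\right\}X_j+c\theta X_j+c\sum_{a=1}^{3}\left\{\phi\xi_a\,\eta_a(\phi X_j)-\xi_a\eta_a(X_j)\right\}+2c\eta^\perp(X_j)\xi=0.
\end{equation*}

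\emph{Stage 3 (projections).} Since $\xi,\xi_a,\phi\xi_a\in\mathcal H^\perp$, projecting the master equation onto the $\theta$-invariant subspace $\mathcal H$ gives $\{\lambda_j\mu_j-\tfrac{\alpha}{2}(\lambda_j+\mu_j)-c\}(X_j)_{\mathcal H}+c\theta(X_j)_{\mathcal H}=0$; using $\theta X_j^\pm=\pm||\xi^\perp||X_j^\pm$ splits this into (\ref{eqn:hopf_+}) and (\ref{eqn:hopf_-}). For the remaining four identities I would adopt the natural basis (\ref{eqn:natural-basis}), in which $\eta_2(\xi)=\eta_3(\xi)=0$, $\theta=||\xi^\perp||\theta_1$, and the only nonvanishing pairings among $\{\xi_a,\phi\xi_b\}$ are $g(\phi\xi_3,\xi_2)=-g(\phi\xi_2,\xi_3)=||\xi^\perp||$ (and the symmetric ones for the $\phi\xi$-slots). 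Taking successive inner products of the master equation with $\xi_1,\xi_2,\xi_3,\phi\xi_1,\phi\xi_2,\phi\xi_3$ then yields (\ref{eqn:hopf_xi2})--(\ref{eqn:hopf_xi1}).

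The main obstacle is the bookkeeping of the $\mathcal H^\perp$ pairings: for instance, the $\xi_2$ inner product produces $cg(\theta X_j,\xi_2)=c||\xi^\perp||g(X_j,\phi\xi_3)$ from $\theta_1\xi_2=\phi\xi_3$ (Lemma~\ref{lem:theta}(d)), and $cg(\phi\xi_3,\xi_2)\eta_3(\phi X_j)=c||\xi^\perp||\eta_3(\phi X_j)=-c||\xi^\perp||g(X_j,\phi\xi_3)$; these must cancel exactly, isolating the coefficient $\lambda_j\mu_j-\tfrac{\alpha}{2}(\lambda_j+\mu_j)-2c$ multiplying $\eta_2(X_j)$. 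The $\xi_1$ pairing, in contrast, receives the full contribution $2c\eta^\perp(X_j)\eta_1(\xi)=2c||\xi^\perp||^2\eta_1(X_j)$, which supplies the shift $-2c+2c||\xi^\perp||^2$ in (\ref{eqn:hopf_xi1}); the analogous analysis of the $\phi\xi_a$-pairings delivers (\ref{eqn:hopf_phixi2}) and (\ref{eqn:hopf_phixi1}).
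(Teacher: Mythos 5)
Your proposal is correct and follows essentially the same route as the paper: the paper likewise uses the commutation relation (\ref{eqn:commute}) to obtain the common eigenframe $\{X_0=\xi,X_1,\dots,X_{4m-2}\}$ and then simply substitutes it into (\ref{eqn:AphiA}). Your Stages 2--3 merely supply the bookkeeping the paper leaves implicit, and the cancellations you flag (e.g.\ between $c\,g(\theta X_j,\xi_2)$ and $c\,g(\phi\xi_3,\xi_2)\,\eta_3(\phi X_j)$) do work out exactly as claimed.
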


\begin{lemma}\label{lem:hopf_eigenvalue-2}
Let $M$ be a Hopf hypersurface in $\hat M^m(c)$. If $0<||\xi^\perp||< 1$, then 
\begin{enumerate}
\item[(a)] $A\mathcal H(1)\subset \mathcal H(1)$,
\item[(b)] $A\mathcal H(-1)\subset\mathcal H(-1)$,
\item[(c)] $A(\mathbb R\phi\xi^\perp\oplus\mathbb R\phi^2\xi^\perp)\subset
\mathbb R\phi\xi^\perp\oplus\mathbb R\phi^2\xi^\perp$.
\end{enumerate}
\end{lemma}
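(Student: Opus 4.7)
The plan is to exploit the simultaneous diagonalization of $A$ and $\phi A\phi$ provided by Lemma~\ref{lem:hopf_eigenvalue}, reading off the location of each joint eigenvector from the algebraic constraints (\ref{eqn:hopf_+})--(\ref{eqn:hopf_xi1}). First I would fix the natural local basis (\ref{eqn:natural-basis}), so that $\xi_1=\xi^\perp/||\xi^\perp||$, $\eta_a(\xi)=||\xi^\perp||\delta_{a1}$, and $\mathcal H(\varepsilon)=\mathcal H_1(\varepsilon)$; take the frame $\{X_0=\xi, X_1,\ldots, X_{4m-2}\}$ of Lemma~\ref{lem:hopf_eigenvalue}; and set $f_j := \lambda_j\mu_j - (\alpha/2)(\lambda_j+\mu_j)$.

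Using $\phi^2\xi^\perp=-\xi^\perp+||\xi^\perp||^2\xi$ (so that $g(X_j,\xi^\perp)=-g(X_j,\phi^2\xi^\perp)$ for $j\ge 1$), I decompose each $X_j$ along the orthogonal splitting
\[
T_xM\ominus \mathbb R\xi \;=\; \mathcal H(1)\oplus\mathcal H(-1)\oplus\vspan\{\xi_2,\xi_3,\phi\xi_2,\phi\xi_3\}\oplus\vspan\{\phi\xi^\perp,\phi^2\xi^\perp\}.
\]
Equations (\ref{eqn:hopf_+})--(\ref{eqn:hopf_xi1}) then say that the projection of $X_j$ onto each summand must vanish unless $f_j$ equals, in the same order, one of
\[
c(1-||\xi^\perp||),\quad c(1+||\xi^\perp||),\quad 2c,\quad 2c(1-||\xi^\perp||^2).
\]
A direct case check shows that for $0<||\xi^\perp||<1$ these four scalars are pairwise distinct except on the locus $\{||\xi^\perp||=1/2\}$, where $c(1+||\xi^\perp||)=2c(1-||\xi^\perp||^2)=3c/2$. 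Off that locus each $X_j$ lies entirely in a single summand, so $A$ preserves each summand, and (a), (b), (c) follow immediately.

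The main obstacle is the potential degeneracy on the locus $||\xi^\perp||=1/2$. If it has empty interior, continuity of $A$ together with the smooth subbundles $\mathcal H(\pm1)$ and $\mathbb R\phi\xi^\perp\oplus\mathbb R\phi^2\xi^\perp$ extends the invariance across it. On any open subset $U_0$ where $||\xi^\perp||\equiv 1/2$, Lemma~\ref{lem:global}(f) gives $\eta^\perp\phi A=0$ on $U_0$, whence $A\phi\xi^\perp=0$; combined with $g(A\phi^2\xi^\perp,\xi)=0$ (which follows from $A\xi=\alpha\xi$ and $\phi^2\xi^\perp=-\xi^\perp+||\xi^\perp||^2\xi$), this extra information splits the otherwise ambiguous $f_j=3c/2$ joint eigenspace into its $\mathcal H(-1)$ and $\vspan\{\phi\xi^\perp,\phi^2\xi^\perp\}$ parts, restoring (a), (b), (c) on $U_0$.
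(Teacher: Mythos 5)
Your generic argument is sound and is essentially the paper's: the frame of Lemma~\ref{lem:hopf_eigenvalue} together with (\ref{eqn:hopf_+})--(\ref{eqn:hopf_xi1}) forces each $X_j$ into a single summand of your orthogonal decomposition whenever the four scalars $c(1-||\xi^\perp||)$, $c(1+||\xi^\perp||)$, $2c$, $2c(1-||\xi^\perp||^2)$ are pairwise distinct, and you correctly locate the only coincidence for $0<||\xi^\perp||<1$ at $||\xi^\perp||=1/2$, where $c(1+||\xi^\perp||)=2c(1-||\xi^\perp||^2)=3c/2$.

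The gap is in how you resolve that degeneracy on an open set $U_0$ with $||\xi^\perp||\equiv 1/2$. Lemma~\ref{lem:global}(f) does give $A\phi\xi^\perp=0$ there, but your second ingredient, $g(A\phi^2\xi^\perp,\xi)=0$, is vacuous: it holds everywhere (it is just $\alpha\, g(\phi^2\xi^\perp,\xi)=0$) and carries no information about the ambiguity you actually need to break, namely between $\mathcal H(-1)$ and $\mathbb R\phi^2\xi^\perp$ --- two subspaces that are both already orthogonal to $\xi$ and, once $A\phi\xi^\perp=0$ is known, to $\phi\xi^\perp$ as well. Using $A\phi\xi^\perp=0$, part (a), and the symmetry of $A$, you only get that $\mathcal H(-1)\oplus\mathbb R\phi^2\xi^\perp$ is $A$-invariant; to split it you must show that $A\phi^2\xi^\perp$ has no $\mathcal H(-1)$-component, and nothing in your proposal does this. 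The paper closes this by substituting $A\phi\xi^\perp=0$ back into (\ref{eqn:AphiA}) applied to $\phi\xi^\perp$, which yields $A\phi^2\xi^\perp=-(4c/\alpha)(1-||\xi^\perp||^2)\phi^2\xi^\perp$ (this also forces $\alpha\neq0$ on $U_0$, since $\alpha=0$ would give $2c(1-||\xi^\perp||^2)\phi^2\xi^\perp=0$, a contradiction). With $\phi\xi^\perp$ and $\phi^2\xi^\perp$ both principal one re-chooses the frame to contain $(4/\sqrt3)\phi\xi^\perp$ and $(4/\sqrt3)\phi^2\xi^\perp$, after which the remaining $X_j$ with $X_j^-\neq0$ are forced into $\mathcal H(-1)$. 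You need to supply this computation (or an equivalent argument); as written, the degenerate case is not closed.
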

\begin{proof}
Consider the local orthonormal frame stated in Lemma~\ref{lem:hopf_eigenvalue}.

\medskip
(a) Suppose $X_j^+\neq0$ for some $j\in\{1,\cdots,4m-2\}$. Then  (\ref{eqn:hopf_+})--(\ref{eqn:hopf_xi1}) give
\begin{align*}
0=&-2c||\xi^\perp||X_j^- 																														\\
0=&-c(||\xi^\perp||+1)g(X_j,\xi_a) +2c\eta_a(\xi)g(X_j,\xi^\perp)  										\\
0=&-c(||\xi^\perp||+1)g(X_j,\phi\xi_a)+2c\eta_a(\xi)g(X_j,\phi\xi^\perp)       	\\   
0=&c(2||\xi^\perp||+1)(||\xi^\perp||-1)\left\{g(X_j,\phi\xi^\perp)^2+g(X_j,\xi^\perp) 		^2\right\}. 				
\end{align*}
These imply that $X_j\in\mathcal H(1)$ and so we obtain $A\mathcal H(1)\subset\mathcal H(1)$.

\medskip
(b)
Suppose $X_j^-\neq0$ for some $j\in\{1,\cdots,4m-2\}$.
If $||\xi^\perp||\neq 1/2$ at a point $x$, then $X_j\perp$ $\phi \xi^\perp$,  $\xi^\perp$ by (\ref{eqn:hopf_-}), (\ref{eqn:hopf_phixi1})--(\ref{eqn:hopf_xi1}).
Furthermore, since $||\xi^\perp||\neq1$, we obtain $A\mathcal H(-1)\subset \mathcal H(-1)$ at $x$ by (\ref{eqn:hopf_xi2})--(\ref{eqn:hopf_phixi2}).

Now suppose $||\xi^\perp||=1/2$ on an open subset $G\subset M$. 
Then $A\phi\xi^\perp=0$ by virtue of Lemma~\ref{lem:global}(f). It follows further on from (\ref{eqn:AphiA}) that
$A\phi^2\xi^\perp=-(4c/\alpha)(1-||\xi^\perp||^2)\phi^2\xi^\perp$. Hence we can select another orthonormal frame  in which 
$X_{4m-3}=(4/\sqrt3) \phi\xi^\perp,X_{4m-2}=(4/\sqrt3)\phi^2\xi^\perp$. 
It follows that  for $j\in\{1,\cdots,4m-4\}$, 
(\ref{eqn:hopf_-})--(\ref{eqn:hopf_phixi2}) imply that if $X_j^-\neq0$, then $X_j\perp$ $ \xi_a,\phi\xi_a$ on $G$.
Hence, we conclude that $A\mathcal H(-1)\subset \mathcal H(-1)$.

\medskip
(c) 
Since $\mathcal H$ is invariant under $\phi$ and $A$,
we can reconstruct the local orthonormal frame such that 
$X_1,\cdots,X_6$ (resp. $X_7,\cdots,X_{4m-1}$) are tangent to $\mathcal H^\perp$ (resp. $\mathcal H$).
Taking the vectors  $\xi_1,\xi_2,\xi_3$  with properties (\ref{eqn:natural-basis}),   (\ref{eqn:hopf_xi2})--(\ref{eqn:hopf_xi1}) give 
\begin{align*}
0=&\left\{\lambda_j\mu_j-\frac{\alpha}2(\lambda_j+\mu_j)-2c\right\}
					\{g(X_j,\xi_a)^2+g(X_j,\phi\xi_a)^2\}, \quad a\in\{2,3\}								\\   
0=&\left\{\lambda_j\mu_j-\frac{\alpha}2(\lambda_j+\mu_j)-2c+2c\eta_1(\xi)^2\right\}
					\{g(X_j,\phi\xi_1)^2+g(X_j,\xi_1)^2\}. 																									
\end{align*}
These imply that $A(\mathbb R\phi\xi^\perp\oplus\mathbb R\phi^2\xi^\perp)\perp$ $\xi_a$, $\phi\xi_a$
 for $a\in\{2,3\}$ and so the desired result is obtained. 
\end{proof}


\begin{lemma}\label{lem:corvariant}
Let $M$ be a Hopf hypersurface in $\hat M^m(c)$ such that  $0<||\xi^\perp||< 1$ on $M$. Suppose $X\in\mathcal H(\varepsilon)$ such that 
$AX=\lambda X$ and $A\phi X=\mu\phi X$. Then
\begin{align*}
\nabla_X\phi\xi^\perp=\lambda||\xi^\perp||(\varepsilon+||\xi^\perp||)X; \quad 
\nabla_X\phi^2\xi^\perp=\lambda||\xi^\perp||(\varepsilon+||\xi^\perp||)\phi X. \quad 
\end{align*}
Furthermore, if we put 
\begin{align}\label{eqn:ya}
A\phi\xi^\perp=u\phi\xi^\perp-v\phi^2\xi^\perp; \quad  A(-\phi^2\xi^\perp)=p\phi\xi^\perp-q\phi^2\xi^\perp,
\end{align}
then for any $Y\in\mathcal H$
\begin{align*}
g((\nabla_XA)\phi \xi^\perp,Y)=&\lambda||\xi^\perp||(\varepsilon+||\xi^\perp||) \{(u-\lambda)g(X,Y)-vg(\phi X,Y)\}\\
g((\nabla_XA)\phi^2 \xi^\perp,Y)=&\lambda||\xi^\perp||(\varepsilon+||\xi^\perp||) \{-pg(X,Y)+(q-\mu)g(\phi X,Y)\}.
\end{align*}
\end{lemma}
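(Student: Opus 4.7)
The plan is to treat the four assertions in two stages: first produce explicit expressions for $\nabla_X\phi\xi^\perp$ and $\nabla_X\phi^2\xi^\perp$, and then read off the $(\nabla_X A)$ formulas by differentiating the decomposition (\ref{eqn:ya}) and pairing with $Y\in\mathcal H$. For the first identity I invoke Lemma~\ref{lem:global}(j) to write
\[
\nabla_X\phi\xi^\perp=\theta AX+||\xi^\perp||^2AX+2\sum^3_{a=1}\eta_a(\phi AX)\phi\xi_a.
\]
By Lemma~\ref{lem:hopf_eigenvalue-2}(a)--(b), $A$ preserves $\mathcal H(\varepsilon)$, so $AX=\lambda X\in\mathcal H(\varepsilon)\subset\mathcal H$; hence $\theta AX=\varepsilon||\xi^\perp||\lambda X$, while $\eta_a(\phi AX)=-\lambda g(X,\phi\xi_a)=0$ because $\phi\xi_a\in\mathcal H^\perp$. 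Collecting terms yields the first formula.

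For $\nabla_X\phi^2\xi^\perp$ I would rewrite $\phi^2\xi^\perp=-\xi^\perp+||\xi^\perp||^2\xi$ and differentiate. The scalar derivative $X(||\xi^\perp||^2)=4\eta^\perp(\phi AX)$ vanishes by Lemma~\ref{lem:global}(f) and the same orthogonality argument used above, and $\nabla_X\xi=\phi AX=\lambda\phi X$. By Lemma~\ref{lem:global}(h), $\nabla_X\xi^\perp=\lambda\phi^\perp X$. The only step that requires care is identifying $\phi^\perp X$ on $\mathcal H(\varepsilon)$: in the frame (\ref{eqn:natural-basis}) one has $\phi^\perp=||\xi^\perp||\phi_1$, and $\theta_1 X=\varepsilon X$ together with $\eta(X)=0$ gives $\phi_1\phi X=\varepsilon X$. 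Applying $\phi_1$, using $\phi_1^2=-\mathbb I+\xi_1\otimes\eta_1$ and $\eta_1(\phi X)=0$, produces $\phi^\perp X=-\varepsilon||\xi^\perp||\phi X$. Substituting back gives $\nabla_X\phi^2\xi^\perp=\varepsilon\lambda||\xi^\perp||\phi X+\lambda||\xi^\perp||^2\phi X$, which is the asserted value.

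For the last two identities I expand
\[
(\nabla_XA)\phi\xi^\perp=(Xu)\phi\xi^\perp+u\nabla_X\phi\xi^\perp-(Xv)\phi^2\xi^\perp-v\nabla_X\phi^2\xi^\perp-A\nabla_X\phi\xi^\perp,
\]
and pair with $Y\in\mathcal H$. The $(Xu)\phi\xi^\perp$ and $(Xv)\phi^2\xi^\perp$ contributions vanish since $\phi\xi^\perp,\phi^2\xi^\perp\in\mathcal H^\perp$. Substituting the two formulas already established together with $AX=\lambda X$ lets every surviving term factor out $\lambda||\xi^\perp||(\varepsilon+||\xi^\perp||)$, leaving exactly $(u-\lambda)g(X,Y)-vg(\phi X,Y)$. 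The computation for $(\nabla_XA)\phi^2\xi^\perp$ is entirely parallel, using $A\phi^2\xi^\perp=-p\phi\xi^\perp+q\phi^2\xi^\perp$ and $A\phi X=\mu\phi X$. The main obstacle is not any conceptual difficulty but rather the bookkeeping needed to confirm that every $\mathcal H^\perp$-component drops out when pairing with $Y\in\mathcal H$, and the one non-routine identification $\phi^\perp X=-\varepsilon||\xi^\perp||\phi X$ on $\mathcal H(\varepsilon)$; once these are in hand the rest is a direct substitution into Lemma~\ref{lem:global}.
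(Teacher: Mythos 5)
Your proposal is correct and follows essentially the same route as the paper: both compute $\nabla_X\phi\xi^\perp$ and $\nabla_X\phi^2\xi^\perp$ from Lemma~\ref{lem:global} together with the identification $\phi^\perp X=-\varepsilon||\xi^\perp||\phi X$ on $\mathcal H(\varepsilon)$, and then obtain the $(\nabla_XA)$ formulas by differentiating (\ref{eqn:ya}) and discarding the $\mathcal H^\perp$-components against $Y\in\mathcal H$. Your write-up merely supplies a few details the paper leaves implicit (the vanishing of $\eta_a(\phi AX)$ and of the $(Xu)$, $(Xv)$ terms).
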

\begin{proof}
Note that   $\theta X=\varepsilon||\xi^\perp||X$ and $\phi^\perp X=-\varepsilon||\xi^\perp||\phi X$. Then by Lemma~\ref{lem:global} we obtain
\begin{align*}
\nabla_X\phi^2\xi^\perp=\nabla_X(-\xi^\perp+||\xi^\perp||^2\xi)
=-\phi^\perp AX+||\xi^\perp||^2\phi AX=\lambda||\xi^\perp||(\varepsilon+||\xi^\perp||)\phi X.
\end{align*}
The other can be obtained similarly. Next under the setting of (\ref{eqn:ya}),  we obtain
\begin{align*}
g((\nabla_XA)\phi \xi^\perp,Y)
=&g(u\nabla_X\phi\xi^\perp-v\nabla_X\phi^2\xi^\perp-A\nabla_X\phi\xi^\perp,Y)\\
=&\lambda||\xi^\perp||(\varepsilon+||\xi^\perp||) \{(u-\lambda)g(X,Y)-vg(\phi X,Y)\}.
\end{align*}
The last identity can be obtained by a similar calculation.
\end{proof}
\section{Mixed foliate  real hypersurfaces in $\hat M^m(c)$}
Let $M$ be a submanifold in a K\"ahler manifold $\hat M$. If the dimension of the maximal holomorphic subspace  
$\mathcal C_x=JT_xM\cap T_xM$, $x\in M$ is constant and its orthogonal complementary distribution $\mathcal C^\perp$ in $TM$ is totally real,
then $M$ is called a \emph{CR-submanifold}.  
If  $\dim \mathcal C\neq0$ and $\dim \mathcal C^\perp\neq0$, then the CR-submanifold $M$ is said to be \emph{proper}.
A CR-submanifold $M$ is said to be \emph{mixed totally geodesic} if  $h(\mathcal C,\mathcal C^\perp)=0$,
where $h$ is the second fundamental form of $M$. 
A \emph{mixed foliate} CR-submanifold $M$ is a mixed totally geodesic CR-submanifold such that  the distribution $\mathcal C$ is integrable
(cf. \cite{chen1}).

A real hypersurface is a typical example of a proper $CR$-submanifold in a K\"ahler manifold with $\mathcal C^\perp=\mathbb R\xi$.
It is clear that  $M$ is mixed totally geodesic if and only if it is Hopf.  
Furthermore by a result in \cite{chen1}, we can state
\begin{lemma}
Let $M$ be a real hypersurface in a K\"ahler manifold. Then $M$ is mixed foliate if and only if $\phi A+A\phi=0$. 
\end{lemma}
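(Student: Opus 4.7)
The plan is to identify the data of the mixed foliate condition explicitly for a real hypersurface, then translate each piece into a statement about the tensor $\phi A + A\phi$. For a real hypersurface $M$ in a K\"ahler manifold, the maximal holomorphic distribution is $\mathcal{C} = \mathfrak{D} = \ker\eta$ and the totally real complement is $\mathcal{C}^\perp = \mathbb{R}\xi$. Since the second fundamental form satisfies $h(X,Y) = g(AX,Y)N$, the mixed totally geodesic condition $h(\mathcal{C},\mathcal{C}^\perp) = 0$ is equivalent to $g(AX,\xi) = 0$ for every $X \in \mathfrak{D}$, i.e., $M$ is Hopf. So I would first record: $M$ is mixed totally geodesic iff $A\xi = \alpha\xi$ for some function $\alpha$.

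Next I would use $\nabla_X\xi = \phi A X$ from (\ref{eqn:contact}) to compute, for $X,Y \in \mathfrak{D}$,
\[
\eta([X,Y]) = g(\nabla_X Y - \nabla_Y X,\xi) = g(X,\phi AY) - g(Y,\phi AX) = g((\phi A + A\phi)Y,X),
\]
using the symmetry of $A$ and skew-symmetry of $\phi$. Hence $\mathcal{C} = \mathfrak{D}$ is integrable iff $(\phi A + A\phi)|_{\mathfrak{D}}$ is $g$-orthogonal to $\mathfrak{D}$.

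For the sufficiency direction, assuming $\phi A + A\phi = 0$, applying the identity to $\xi$ (and using $\phi\xi = 0$) gives $\phi(A\xi) = 0$, so $A\xi \in \ker\phi = \mathbb{R}\xi$, i.e., $M$ is Hopf; and the integrability condition follows immediately. For the necessity direction, assume $M$ is mixed foliate. The operator $T := \phi A + A\phi$ is skew-symmetric with respect to $g$, since $(\phi A)^* = A\phi \cdot (-1)$ and $(A\phi)^* = -\phi A$. One checks the associated bilinear form $g(T\cdot,\cdot)$ vanishes on every pair: on $(\mathfrak{D},\mathfrak{D})$ by integrability; on $(X,\xi)$ with $X \in \mathfrak{D}$ because $g(\phi AX,\xi) = 0$ (as $A\xi = \alpha\xi \perp \phi X$) and $g(A\phi X,\xi) = \alpha g(\phi X,\xi) = 0$ (since $\phi X \in \mathfrak{D}$); and on $(\xi,\xi)$ trivially. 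Nondegeneracy of $g$ then forces $T = 0$.

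No serious obstacle is expected — the computation is short and the essential mechanism is simply the equivalence $\nabla_X\xi = \phi A X \implies \eta([X,Y]) = g((\phi A + A\phi)Y,X)$ together with skew-symmetry of $\phi A + A\phi$. The only point demanding mild care is keeping track of the mixed pairings involving $\xi$, where the Hopf property from the mixed totally geodesic condition is what forces the vanishing of $g(T X,\xi)$.
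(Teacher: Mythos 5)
Your proof is correct. The paper does not actually prove this lemma; it obtains it as a specialization of Chen's general characterization of mixed foliate CR-submanifolds of K\"ahler manifolds (the reference \cite{chen1}), whereas you supply a direct, self-contained verification for the hypersurface case. Your mechanism is sound: $h(\mathfrak D,\mathbb R\xi)=0$ is exactly the Hopf condition, $\nabla_X\xi=\phi AX$ converts integrability of $\mathfrak D$ into the vanishing of $g((\phi A+A\phi)Y,X)$ on $\mathfrak D\times\mathfrak D$, and the skew-symmetry of $T=\phi A+A\phi$ together with the Hopf condition kills the remaining pairings against $\xi$, so $T=0$; conversely $T=0$ applied to $\xi$ gives $\phi A\xi=0$, hence $A\xi\in\ker\phi=\mathbb R\xi$. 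The only cosmetic slip is your parenthetical justification of $g(\phi AX,\xi)=0$: this holds simply because the image of $\phi$ lies in $\mathfrak D$ (equivalently $\phi\xi=0$), not because $A\xi=\alpha\xi\perp\phi X$ --- the latter is what handles the \emph{other} term $g(A\phi X,\xi)=\alpha g(\phi X,\xi)=0$, and is indeed where the Hopf hypothesis enters. Your direct argument buys transparency and independence from the external reference; the paper's citation buys brevity and situates the lemma in the general CR-submanifold framework.
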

In this section, we shall prove the nonexistence of  mixed foliate real hypersurfaces in $\hat M^{m}(c)$.
\begin{theorem}\label{thm:mixed}
There does not exist any mixed foliate real hypersurface  in $\hat M^{m}(c)$, $m\geq3$.
\end{theorem}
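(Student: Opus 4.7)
The plan is to assume $M$ is mixed foliate---equivalently $\phi A + A\phi = 0$ by the preceding lemma---and derive a contradiction. Applying this identity to $\xi$ and using $\phi\xi = 0$ gives $\phi A\xi = 0$, so $A\xi = \alpha\xi$ and $M$ is Hopf. The anticommutation gives $A\phi A = -\phi A^{2} = -A^{2}\phi$; substituting into~(\ref{eqn:AphiA}) produces the master identity
\[
-\phi A^{2} - c(\phi + \phi^{\perp})
 = c\sum_{a=1}^{3}\{\xi_{a}\otimes\eta_{a}\phi + \phi\xi_{a}\otimes\eta_{a}\}
 - 2c\{\xi\otimes\eta^{\perp}\phi + \phi\xi^{\perp}\otimes\eta\}.
\]

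When $c > 0$ the contradiction comes immediately. Take a canonical local basis $\{J_{1},J_{2},J_{3}\}$ of $\mathfrak J$; on the region $\|\xi^{\perp}\| > 0$ use the natural frame~(\ref{eqn:natural-basis}) with $\xi_{1} = \xi^{\perp}/\|\xi^{\perp}\|$, so that $\eta_{2}(\xi) = \eta_{3}(\xi) = 0$, and evaluate the master identity at $X = \xi_{a}$ for $a \in \{2,3\}$; on the region $\|\xi^{\perp}\| = 0$ evaluate it at any $X = \xi_{a}$. In both cases $\eta_{a}(\xi) = 0$ and $\eta_{b}(\phi\xi_{a}) = -g(\xi,\phi_{a}\xi_{b}) = 0$, so the right-hand side collapses to $c\phi\xi_{a}$ and $A\phi A\xi_{a} = 2c\phi\xi_{a}$. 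Applying $\phi$ and using $\phi^{2}\xi_{a} = -\xi_{a}$ (from $\eta(\xi_{a}) = 0$) together with $\eta(A^{2}\xi_{a}) = \alpha^{2}\eta(\xi_{a}) = 0$, we extract
\[
A^{2}\xi_{a} = -2c\,\xi_{a},
\]
contradicting the positive semidefiniteness of $A^{2}$.

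For $c < 0$ the argument splits on $\|\xi^{\perp}\|$ and invokes the classification theorems. On the open set where $\xi \in \mathfrak D$, Theorem~\ref{thm:lee-suh} forces $M$ to be an open part of a hypersurface of type $(B)$, $(C_{2})$, or $(D)$; inspection of the principal curvature data in Theorems~\ref{thm:+2} and~\ref{thm:-2} against the mixed foliate requirement $A\phi X = -\lambda\phi X$ rules out each case: for $(B)$, $JT_{\lambda} = T_{\mu}$ yields $\lambda + \mu = 2\sqrt{-c}\coth(2\sqrt{-c}r) \neq 0$; for $(C_{2})$, $A|_{\mathcal H} = \sqrt{-c}\,\mathbb I$ with $\phi\mathcal H = \mathcal H$; for $(D)$, $A\xi_{a} = 2\sqrt{-c}\,\xi_{a}$ while $A\phi\xi_{a} = 0$. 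On the open set where $\|\xi^{\perp}\| = 1$, combining $A^{2}\xi_{a} = -2c\xi_{a}$ with $A\mathcal H(1) = 0$ from Lemma~\ref{lem:theta_A} and the self-adjointness of $A$ forces $A\xi_{a} \in \mathfrak D^{\perp}$, hence $A\mathfrak D\subset\mathfrak D$; Theorem~\ref{thm:A_A0} then identifies $M$ as type $(A)$ or $(C_{1})$, both excluded by the same sort of inspection on $\mathfrak D^{\perp}\ominus\mathbb R\xi$ (which $\phi$ preserves).

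The remaining sub-case $0 < \|\xi^{\perp}\| < 1$ is the main obstacle, as no ambient classification is available there. Specializing Lemma~\ref{lem:hopf_eigenvalue} with $\mu_{j} = -\lambda_{j}$ and using the $A$-invariance of $\mathcal H(\pm 1)$ and of $\vspan\{\phi\xi^{\perp},\phi^{2}\xi^{\perp}\}$ from Lemma~\ref{lem:hopf_eigenvalue-2}, the principal curvatures on these three subspaces are forced to be $\pm\sqrt{-c(1-\|\xi^{\perp}\|)}$, $\pm\sqrt{-c(1+\|\xi^{\perp}\|)}$, and $\pm\sqrt{-2c(1-\|\xi^{\perp}\|^{2})}$ respectively. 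The plan is to combine the covariant derivative identities of Lemma~\ref{lem:corvariant} with the Codazzi equation paired with $Y \in \{\phi\xi^{\perp},\phi^{2}\xi^{\perp}\}$ and principal $X \in \mathcal H(\pm 1)$, and to eliminate $d(\|\xi^{\perp}\|^{2})$ via Lemma~\ref{lem:global}(f), so as to extract an over-determined algebraic system on $\|\xi^{\perp}\|$ with no solution in $(0,1)$, closing the proof.
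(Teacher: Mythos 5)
Your reduction to the Hopf case, the disposal of $c>0$ via $A^2\xi_a=-2c\,\xi_a$ (the identity does hold, even though your intermediate claim $\eta_b(\phi\xi_a)=0$ fails when $\|\xi^\perp\|>0$ --- the offending terms happen to cancel against $-c\,\phi^\perp\xi_a$), and the elimination of the $\xi\in\mathfrak D$ case by inspecting Theorems~\ref{thm:+2} and~\ref{thm:-2} are all sound and consistent with the paper's own reduction. The two remaining sub-cases of $c<0$, however, contain genuine gaps. In the case $\|\xi^\perp\|=1$ you assert that $A^2\xi_a=-2c\,\xi_a$, $A\mathcal H(1)=0$ and self-adjointness force $A\xi_a\in\mathfrak D^\perp$. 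This implication is false: under $\phi A+A\phi=0$ one has $A^2=-2c\,\mathbb I$ on \emph{all} of $\mathcal W=\mathcal H(-1)\oplus(\mathfrak D^\perp\ominus\mathbb R\xi)$ (Lemma~\ref{lem:eta-parallel-b}(a)), so the $A^2$-eigenvalue does not separate $\xi_2,\xi_3$ from $\mathcal H(-1)$, and the configuration $A\xi_2=p\xi_2+q\xi_3+rU$ with $0\neq U\in\mathcal H(-1)$ is perfectly compatible with everything you have established (it is also compatible with $\phi A+A\phi=0$, which merely forces $A\xi_3=q\xi_2-p\xi_3+r\phi U$, etc.). Excluding this configuration is precisely the content of the paper's Lemma~\ref{lem:mixed-3}, and it is done there by a second-order argument: the Ricci identity $(R(X,Y)A)Z=\nabla^2A(;Y;X)Z-\nabla^2A(;X;Y)Z$ summed over an orthonormal frame of $\mathcal V=\mathcal H(-1)\ominus\vspan\{U,\phi U\}$, compared with the purely algebraic evaluation of $\sum_j(R(e_j,\phi e_j)A)$ coming from the Gauss equation (Lemma~\ref{lem:RA}), which yields $-16cm\,g(\phi AZ,W)=0$ and hence a contradiction.

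The sub-case $0<\|\xi^\perp\|<1$ is not proved at all: you correctly pin down the principal curvatures on $\mathcal H(\pm1)$ and on $\vspan\{\phi\xi^\perp,\phi^2\xi^\perp\}$, but the concluding step is only a plan, and the proposed mechanism (first covariant derivatives of $A$ via Lemma~\ref{lem:corvariant} fed into the Codazzi equation, eliminating $d(\|\xi^\perp\|^2)$ by Lemma~\ref{lem:global}(f)) is unlikely to close: those first-order identities are mutually consistent for every value of $\|\xi^\perp\|\in(0,1)$ (e.g.\ pairing Codazzi for $X,\phi X\in\mathcal H(\varepsilon)$ against $\phi\xi^\perp$ produces $-(\lambda+\mu)v\|\xi^\perp\|(\varepsilon+\|\xi^\perp\|)=0$, which is automatic since $\mu=-\lambda$), so no over-determined algebraic system on $\|\xi^\perp\|$ emerges at first order. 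The paper's Lemma~\ref{lem:mixed-2} again resolves this case only by the second-order Ricci-identity/Gauss-equation comparison over an orthonormal frame of $\mathcal H(-1)$, supported by the auxiliary facts that $g((\nabla_YA)Z,W)=0$ on $\mathcal H(-1)$ (Lemma~\ref{lem:eta-parallel-a}) and that $\nabla_XZ$ has no $\mathcal H(1)$ or $\vspan\{\xi_2,\xi_3,\phi\xi_2,\phi\xi_3\}$ components (Lemmas~\ref{lem:hopf_perp-1a} and~\ref{lem:inv-subsp-1}), which is how the dimension $m$ enters and forces $16c\lambda m=0$. Without an argument of this type, your proof is incomplete in exactly the two cases that constitute the heart of the theorem.
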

\begin{remark}
The nonexistence of mixed foliate real hypersurfaces in a non-flat complex space form was obtained in \cite{ki-suh}.
\end{remark}
The proof of Theorem~\ref{thm:mixed} is splitted into several parts.
We first prove
\begin{lemma} \label{lem:mixed-1}
Let $M$ be a real hypersurface in $\hat M^{m}(c)$.  Then $\phi A+A\phi\neq0$ on each open subset $G\subset M$ with  $\xi^\perp=0$.  
\end{lemma}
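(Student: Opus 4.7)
The plan is to argue by contradiction. I would suppose $\phi A+A\phi=0$ on an open subset $G\subset M$ where $\xi^\perp=0$, and derive an algebraic contradiction by combining this hypothesis with the Hopf fundamental equation (\ref{eqn:AphiA}).

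First, I would extract that $M$ must be Hopf on $G$: applying $\phi A+A\phi=0$ to $\xi$ and using $\phi\xi=0$ gives $\phi A\xi=0$, so $A\xi\in\ker\phi=\mathbb R\xi$, i.e.\ $A\xi=\alpha\xi$ for some function $\alpha$ on $G$.

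The key intermediate step is to show that $A\xi_a=0$ for $a\in\{1,2,3\}$. Since $\xi\in\mathfrak D$ on $G$, Lemma~\ref{lem:Aphixi_a} gives $A\phi\xi_a=0$. Applying $\phi A+A\phi=0$ to $\xi_a$ then yields $\phi A\xi_a=-A\phi\xi_a=0$, so $A\xi_a\in\mathbb R\xi$. But $g(A\xi_a,\xi)=g(\xi_a,\alpha\xi)=0$ since $\xi_a\perp\xi$, and therefore $A\xi_a=0$.

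With this in hand, specialise (\ref{eqn:AphiA}): under $\xi^\perp=0$ all terms involving $\phi^\perp$, $\eta^\perp$, $\phi\xi^\perp$, $\xi^\perp$ drop out, and under $\phi A+A\phi=0$ the middle term $(\alpha/2)(\phi A+A\phi)$ vanishes as well, leaving
\[
A\phi A-c\phi=c\sum_{a=1}^{3}\{\xi_a\otimes\eta_a\phi+\phi\xi_a\otimes\eta_a\}.
\]
Evaluate at $\xi_a$: the left-hand side is $A\phi(A\xi_a)-c\phi\xi_a=-c\phi\xi_a$. For the right-hand side, one checks that $\eta_b(\phi\xi_a)=-g(\xi,\phi_a\xi_b)=0$ for all $a,b$, because for $a=b$ the almost contact relation forces $\phi_a\xi_a=0$, while for $a\neq b$ one has $\phi_a\xi_b=\pm\xi_c$, which is orthogonal to $\xi\in\mathfrak D$. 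Hence the right-hand side reduces to $c\phi\xi_a$, yielding $-c\phi\xi_a=c\phi\xi_a$, so $2c\phi\xi_a=0$. Since $c\neq 0$ and $\phi\xi_a=\phi_a\xi=J_a\xi$ is a unit vector (because $\eta_a(\xi)=0$), this is the desired contradiction.

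The argument is thus short and purely algebraic once one has the identity $A\xi_a=0$, so no serious obstacle is anticipated. The only point worth careful verification is the vanishing of the cross terms $\eta_b(\phi\xi_a)$, which ultimately rests on the hypothesis $\xi\in\mathfrak D$ on $G$.
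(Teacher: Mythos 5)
Your proof is correct, and it finishes differently from the paper's. Both arguments begin identically: from $\phi A+A\phi=0$ one gets $A\xi=\alpha\xi$, and combining Lemma~\ref{lem:Aphixi_a} (valid since $\xi^\perp=0$ forces $\xi\in\mathfrak D$) with the anticommutation hypothesis gives $A\xi_a=A\phi\xi_a=0$. At that point the paper observes that $\mathbb R\xi$ and $\mathfrak D^\perp$ are then both $A$-invariant, invokes the classification Theorems~\ref{thm:+} and \ref{thm:-}, and reads off from Theorems~\ref{thm:+2} and \ref{thm:-2} that the principal curvature along $\xi_a$ is nonzero in every admissible case --- a contradiction. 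You instead substitute into the Hopf identity (\ref{eqn:AphiA}), which under $\xi^\perp=0$ and $\phi A+A\phi=0$ collapses to $A\phi A-c\phi=c\sum_a\{\xi_a\otimes\eta_a\phi+\phi\xi_a\otimes\eta_a\}$, and evaluating at $\xi_a$ yields $-c\phi\xi_a=c\phi\xi_a$ with $\phi\xi_a=\phi_a\xi$ a unit vector (your verification that $\eta_b(\phi\xi_a)=0$ for all $a,b$ is the one point that needs care, and it checks out since $\xi\in\mathfrak D$). Your route is more self-contained: it trades the heavy Berndt--Suh classification machinery for the standard structural equation (\ref{eqn:AphiA}), which the paper already records for Hopf hypersurfaces, so nothing outside the paper's toolkit is needed. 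The paper's route has the mild advantage of not requiring (\ref{eqn:AphiA}) at all, but it leans on deep external classification results where yours is a two-line algebraic computation.
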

\begin{proof}
Since $\xi^\perp=0$ on $G$,    $A\phi \xi_a=0$ for $a\in\{1,2,3\}$ by Lemma~\ref{lem:Aphixi_a}.
Suppose $\phi A+A\phi=0$ on $G$. Then $A\xi_a=\phi A\phi\xi_a=0$ and  $A\xi=\alpha\xi$;  so $G$ is an open part of
 one of the real hypersurfaces 
given in Theorem~\ref{thm:+} and Theorem~\ref{thm:-}.
However, along the direction $\xi_a$, the principal curvature is non-zero for these real hypersurfaces according to Theorem~\ref{thm:+2} and Theorem~\ref{thm:-2}; a contradiction and so the result is obtained.
\end{proof}

We observe that if $\phi A+A\phi=0$, then $M$ is Hopf.  Moreover, 
for each $x\in M$ with  $||\xi^\perp||>0$, by taking a principal curvature vector $X_j$ in 
Lemma~\ref{lem:hopf_eigenvalue} with $X_j^-\neq0$, we obtain $-\lambda_j^2-c(1+||\xi^\perp||)=0$ from $(\ref{eqn:hopf_-})$ and so $c<0$.
It follows that we only need to consider the case $c<0$.
We shall prepare some results before proceeding to the proof of  Theorem~\ref{thm:mixed}.

\begin{lemma}\label{lem:eta-parallel-a}
Let $M$ be a Hopf hypersurface in $\hat M^m(c)$. Suppose $G\subset M$ is an open set with $0<||\xi^\perp||<1$.
If $(\phi A+A\phi)\mathcal H(-1)=0$ on $G$, then for $Y,Z,W\in\mathcal H(-1)$ on $G$, we have
\begin{enumerate}
\item[(a)] $A^2Y=-c(1+||\xi||^\perp)Y$,
\item[(b)] $g((\nabla_YA)Z,W)=0$.
\end{enumerate}
\end{lemma}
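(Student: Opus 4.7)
My plan for part (a) is to evaluate the fundamental Hopf identity \eqref{eqn:AphiA} on $Y\in\mathcal H(-1)$. Since $\mathcal H(-1)\subset\mathcal H$ and $\mathcal H$ is orthogonal to $\mathrm{span}\{\xi,\xi_a,\phi\xi_a\}$, all of $\eta(Y),\eta_a(Y),\eta_a(\phi Y),\eta^\perp(\phi Y)$ vanish, so the entire right-hand side of \eqref{eqn:AphiA} is zero on $Y$. The hypothesis $(\phi A+A\phi)Y=0$ kills the middle term on the left, and Lemma~\ref{lem:global}(c) together with $\theta Y=-\|\xi^\perp\|Y$ yields $\phi^\perp Y=\|\xi^\perp\|\phi Y$; what remains is $A\phi AY=c(1+\|\xi^\perp\|)\phi Y$. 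Substituting $\phi AY=-A\phi Y$ gives $-A^2\phi Y=c(1+\|\xi^\perp\|)\phi Y$, and since $\phi$ is a bijection of $\mathcal H(-1)$ onto itself (Lemma~\ref{lem:H-global}(a)), this is exactly (a).

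For part (b), I will show that the trilinear form $S(X,Y,Z):=g((\nabla_XA)Y,Z)$ is totally symmetric on $\mathcal H(-1)^{\otimes3}$ and then exploit part (a) to twist it into vanishing. For total symmetry: when $X,Y\in\mathcal H$, every $\eta(\cdot),\eta_a(\cdot),\eta_a(\phi\cdot)$ term on the right-hand side of the Codazzi equation vanishes (since $\phi\xi_a\in\mathcal H^\perp$), so the right-hand side lies in $\mathbb R\xi\oplus\mathfrak D^\perp$ and pairs trivially with any $W\in\mathcal H$; combined with the tautological symmetry of $\nabla_XA$, this gives full symmetry of $S$ on $\mathcal H(-1)^{\otimes3}$ (using $A\mathcal H(-1)\subset\mathcal H(-1)$ from Lemma~\ref{lem:hopf_eigenvalue-2}(b)).

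Next, I differentiate the identity $A^2Z+c(1+\|\xi^\perp\|)Z=0$ from (a) along $X\in\mathcal H(-1)$. The crucial auxiliary fact is $X(\|\xi^\perp\|)=0$ on $\mathcal H(-1)$: by Lemma~\ref{lem:global}(f) this quantity is proportional to $\eta^\perp(\phi AX)$, and since $AX\in\mathcal H(-1)\subset\mathcal H$ one has $\eta^\perp(\phi AX)=-\sum_a\eta_a(\xi)g(AX,\phi\xi_a)=0$. Expanding $(\nabla_XA^2)=(\nabla_XA)A+A(\nabla_XA)$ and pairing with $W\in\mathcal H(-1)$ (so that the $A^2\nabla_XZ$ term is absorbed via $g(\nabla_XZ,A^2W)=-c(1+\|\xi^\perp\|)g(\nabla_XZ,W)$) produces
\[
S(X,AZ,W)+S(X,Z,AW)=0\qquad(X,Z,W\in\mathcal H(-1)).
\]

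To finish, set $T(X,Y,Z):=S(AX,Y,Z)$; by total symmetry of $S$ the displayed identity rewrites as $T(Z,X,W)=-T(W,X,Z)$, while $T$ is symmetric in its last two slots since $S$ is. A brief symmetry juggle---first combining these two to deduce $T(Z,X,W)=T(X,Z,W)$, which upgrades $T$ to totally symmetric, and then re-applying the skew relation---forces $T\equiv0$. Since in the only non-vacuous case $c<0$ the operator $A|_{\mathcal H(-1)}$ is invertible (its eigenvalues $\pm\sqrt{-c(1+\|\xi^\perp\|)}$ are nonzero), this yields $S\equiv0$ on $\mathcal H(-1)^{\otimes3}$, which is (b). The main obstacle is this closing symmetry-juggling step, which crucially hinges on the sharp vanishing $X(\|\xi^\perp\|)=0$ for $X\in\mathcal H(-1)$; without it an inhomogeneous term would survive and the argument would collapse.
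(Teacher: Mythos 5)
Your proof is correct and follows essentially the same route as the paper: part (a) by evaluating \eqref{eqn:AphiA} on $\mathcal H(-1)$, and part (b) by differentiating the identity $A^2=-c(1+\|\xi^\perp\|)\mathbb I$ on $\mathcal H(-1)$ (using $d\|\xi^\perp\|=0$ in those directions via Lemma~\ref{lem:global}(f)), exploiting the Codazzi symmetry of $g((\nabla_XA)Y,Z)$ on $\mathcal H$, and concluding via invertibility of $A|_{\mathcal H(-1)}$. The only difference is cosmetic: the paper finishes with a cyclic-sum-and-subtract argument where you use the equivalent symmetric/antisymmetric juggling on $T=S(A\cdot,\cdot,\cdot)$.
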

\begin{proof}
(a) It can be obtained directly from (\ref{eqn:AphiA}) and the fact $\phi^\perp Y=||\xi^\perp||\phi Y$ for $Y\in\mathcal H(-1)$.

\medskip
(b)  For all vector fields $Y,Z,W$ tangent to $\mathcal H(-1)$ on $G$, it follows from (a),   Lemma~\ref{lem:global}(f) and the Codazzi equation that 
\begin{align*}
0=&g((\nabla_WA)Y,AZ)+g((\nabla_WA)Z,AY)+g(A^2\nabla_WY,Z)+g(A^2Y,\nabla_WZ)\\
     &+c(1+||\xi^\perp||)\{g(\nabla_WY,Z)+g(Y,\nabla_WZ)\}\\
  =&g((\nabla_WA)Y,AZ)+g((\nabla_ZA)W,AY)
	\end{align*}
By taking  a cylic sum over $Y,Z,W$ in the preceding equation, and then substracting the obtained equation from the preceding equation, yields
\[
g((\nabla_YA)Z,AW)=0, \quad Y,Z,W\in\mathcal H(-1).
\] 
By (a) and Lemma~\ref{lem:hopf_eigenvalue-2}(b),  $A$ is an isomorphism when restricted to $\mathcal H(-1)$. Hence, we obtain the lemma. 
\end{proof}

\begin{lemma}\label{lem:eta-parallel-b}
Let $M$ be a Hopf hypersurface in $\hat M^m(c)$. Suppose $G\subset M$ is an open set with $||\xi^\perp||=1$.
If $(\phi A+A\phi)\mathcal W=0$ on $G$, where $\mathcal W=\mathcal H(-1)\oplus(\mathfrak D^\perp\ominus\mathbb R\xi)$, then for $Y,Z,W\in\mathcal W$ on $G$, we have
\begin{enumerate}
\item[(a)] $A^2Y=-2cY$,
\item[(b)] $g((\nabla_YA)Z,W)=0$.
\end{enumerate}
\end{lemma}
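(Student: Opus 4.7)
The plan is to mimic the proof of Lemma~\ref{lem:eta-parallel-a}, enlarging $\mathcal H(-1)$ to $\mathcal W$ and taking advantage of the fact that on $G$ we have $\xi=\xi^\perp=\xi_1$, $\phi\xi=0$, and the basis relations (\ref{eqn:natural-basis2}) are in force.

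For part (a), I would apply the Hopf identity (\ref{eqn:AphiA}) to an arbitrary $Y\in\mathcal W$. The hypothesis $(\phi A+A\phi)|_{\mathcal W}=0$ gives $A\phi AY=-A^2\phi Y$, while $\eta(Y)=\eta^\perp(\phi Y)=0$ and $\phi\xi^\perp=0$ kill the last two terms on the right of (\ref{eqn:AphiA}). Splitting into the subcases $Y\in\mathcal H(-1)$ and $Y\in\mathfrak D^\perp\ominus\mathbb R\xi$ and using Lemma~\ref{lem:global}(c) together with (\ref{eqn:natural-basis2}), one checks $\phi^\perp Y=\phi Y$ in the first case and $\phi^\perp Y=-\phi Y$ in the second, while the $\xi_a$-sum on the right of (\ref{eqn:AphiA}) contributes $0$ in the first case and $2c\phi Y$ in the second. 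In both subcases (\ref{eqn:AphiA}) collapses to $A^2\phi Y=-2c\phi Y$, and invertibility of $\phi$ on each summand of $\mathcal W$ then yields $A^2Y=-2cY$.

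For part (b), I would differentiate the identity $g(AY,AZ)=-2c\,g(Y,Z)$ (immediate from (a) and self-adjointness of $A$) along $W$ for vector fields $Y,Z,W$ tangent to $\mathcal W$. Since $||\xi^\perp||\equiv1$ the Lemma~\ref{lem:global}(f) term $W(||\xi^\perp||)g(Y,Z)$ drops out, and using $g(A^2\nabla_WY,Z)=-2c\,g(\nabla_WY,Z)$ the identity reduces to
\[
g((\nabla_WA)Y,AZ)+g((\nabla_WA)Z,AY)=0.
\]
The Codazzi equation then trades $(\nabla_WA)Z$ for $(\nabla_ZA)W$ modulo a curvature remainder $\Theta(W,Z)$. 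Provided $\Theta(W,Z)\in\mathcal W^\perp=\mathbb R\xi\oplus\mathcal H(1)$ for all $W,Z\in\mathcal W$, combined with $A\mathcal W\subset\mathcal W$ (from Lemma~\ref{lem:theta_A}(b) giving $A\mathcal H(1)=0$, together with $A\xi=\alpha\xi$ and self-adjointness of $A$), we have $g(\Theta(W,Z),AY)=0$ for $Y\in\mathcal W$, upgrading the display to
\[
g((\nabla_WA)Y,AZ)+g((\nabla_ZA)W,AY)=0.
\]
A cyclic sum over $(Y,Z,W)$ followed by subtraction then yields $g((\nabla_YA)Z,AW)=0$, and since $A|_{\mathcal W}$ is invertible with image $\mathcal W$ (because $A^2=-2c\mathbb I$ on $\mathcal W$ with $c<0$), this is equivalent to (b).

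The main obstacle is the verification $\Theta(W,Z)\in\mathcal W^\perp$. This is a case analysis over $a\in\{1,2,3\}$ and the two summands of $\mathcal W$: the $a=1$ coefficients $\eta_1(W)$, $\eta_1(\phi W)$, etc., all vanish because $\xi_1=\xi$ and $\phi\xi=0$; for $a\in\{2,3\}$, (\ref{eqn:Ha}), Lemma~\ref{lem:theta}, and the explicit values $\phi_a\xi_b$, $\theta_a\xi_b$ on $\{\xi_2,\xi_3\}$ force $\phi_a$ and $\theta_a$ to send $\mathcal W$ into $\mathcal W^\perp$; and the $-2c\,g(\phi_aW,Z)\xi_a$ terms with $\xi_a\in\mathcal W$ vanish because $\phi_aW\in\mathcal W^\perp$ while $Z\in\mathcal W$.
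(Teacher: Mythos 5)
Your proof is correct and follows essentially the same route as the paper: part (a) by evaluating (\ref{eqn:AphiA}) on $\mathcal W$, and part (b) by differentiating $g(A^2Y,Z)=-2c\,g(Y,Z)$, trading $(\nabla_WA)Z$ for $(\nabla_ZA)W$ via Codazzi, and running the cyclic-sum argument before inverting $A|_{\mathcal W}$. Your case-by-case verification that the Codazzi remainder lies in $\mathcal W^\perp$ is precisely the content of Lemma~\ref{lem:general1}, which is the ingredient the paper cites at this step.
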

\begin{proof}
(a) It can be obtained directly from (\ref{eqn:AphiA}).

\medskip
(b) Using a similar manner as in the proof of  Lemma~\ref{lem:eta-parallel-a} but  with the help of Lemma~\ref{lem:general1},  we obtain 
\[
g((\nabla_YA)Z,AW)=0, \quad Y,Z,W\in\mathcal W.
\] 
We note that $A\mathcal W\subset \mathcal W$ by virtue of Lemma~\ref{lem:theta_A}, together with (a), 
we obtain that  $A_{|\mathcal W}$ is an isomorphism. Hence, we obtain the lemma. 
\end{proof}

\begin{lemma}\label{lem:RA}
Let $M$ be a Hopf  hypersurface $M$ in $\hat M^m(c)$ and $||\xi^\perp||>0$ at a point $x\in M$.
Suppose  $\mathcal V$ is a subspace of $\mathcal H(-1)$ at  $x$ that is  invariant under $A$ and $\phi$.
If  $(\phi A+A\phi)\mathcal V=0$, then 
\begin{align*}
\sum^n_{j=1}g( (R(e_j,\phi e_j)A)Z,W)=-4c(5+||\xi^\perp||+2n)g(\phi AZ,W)
\end{align*} 
for any $Z,W\in\mathcal V$, where $\{e_1,\cdots,e_n\}$ is an orthonormal basis of $\mathcal V$ and $n=\dim \mathcal V$.
\end{lemma}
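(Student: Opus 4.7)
The plan is to expand $(R(e_j, \phi e_j) A) Z = R(e_j, \phi e_j)(AZ) - A R(e_j, \phi e_j) Z$, pair with $W$, and reduce to the bilinear form
\[
\Psi(V_1, V_2) := \sum_{j=1}^n g(R(e_j, \phi e_j) V_1, V_2), \qquad V_1, V_2 \in \mathcal V.
\]
Since $A$ is self-adjoint, the target sum equals $\Psi(AZ, W) - \Psi(Z, AW)$. The hypothesis $(\phi A + A\phi)\mathcal V = 0$ then gives $g(\phi Z, AW) = g(A\phi Z, W) = -g(\phi AZ, W)$, so it suffices to prove
\[
\Psi(V_1, V_2) = -2c(5 + ||\xi^\perp|| + 2n)\, g(\phi V_1, V_2);
\]
the stated identity follows via $\phi A - A\phi = 2\phi A$ on $\mathcal V$.

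Two preliminary identities drive the computation. First, evaluating (\ref{eqn:AphiA}) on $Y \in \mathcal V$: its right-hand side annihilates $Y$ because $\eta$, each $\eta_a$, and each $\eta_a \circ \phi$ vanish on $\mathcal H$; the $(\phi A + A\phi)$ term vanishes by hypothesis; and $\phi^\perp Y = ||\xi^\perp||\, \phi Y$ holds on $\mathcal H(-1)$. This yields $A\phi A Y = c(1+||\xi^\perp||)\,\phi Y$, which combined with $A\phi A = -A^2 \phi$ on $\mathcal V$ gives $A^2 = -c(1+||\xi^\perp||)\,I$ on $\mathcal V$. Second, in the natural frame (\ref{eqn:natural-basis}) with $\xi_1 = \xi^\perp / ||\xi^\perp||$, Lemma~\ref{lem:theta}(h) specializes to $\phi_a = -\theta_a \phi$ on $\mathcal H$; since $\theta_1 = -I$ on $\mathcal H(-1)$, one deduces $\phi_1|_{\mathcal H(-1)} = \phi|_{\mathcal H(-1)}$. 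For $a \in \{2, 3\}$, (\ref{eqn:Ha}) shows that $\phi_a, \theta_a$ send $\mathcal H(-1)$ into $\mathcal H(1)$, so every Gauss-equation term carrying $\phi_a$ or $\theta_a$ with $a \neq 1$ pairs vectors in orthogonal subspaces and drops out of $\Psi(V_1, V_2)$.

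With these reductions, one expands $\Psi$ by substituting $X = e_j$, $Y = \phi e_j$, $Z = V_1$ into the Gauss equation, using $\phi^2 = -I$ on $\mathcal H$ and the completeness identities $\sum_j g(e_j, V_1)\, e_j = V_1$, $\sum_j g(e_j, V_1)\, \phi e_j = \phi V_1$, $\sum_j g(\phi e_j, V_1)\, e_j = -\phi V_1$ (valid because $\mathcal V$ is $\phi$-invariant). The four contributions evaluate to $-2c(1+||\xi^\perp||)\, g(\phi V_1, V_2)$ from the shape-operator piece (via $A^2 \phi V_1 = -c(1+||\xi^\perp||)\phi V_1$), $-2c(2+n)\, g(\phi V_1, V_2)$ from the plain $\phi$-piece, $-2c(1+n)\, g(\phi V_1, V_2)$ from the $a = 1$ slice of the $\phi_a$-piece using $\phi_1 = \phi$, and $-2c\, g(\phi V_1, V_2)$ from the $a = 1$ slice of the $\theta_a$-piece using $\theta_1 = -I$. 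Their sum is precisely $-2c(5 + ||\xi^\perp|| + 2n)\, g(\phi V_1, V_2)$. The main obstacle is this $a = 1$ bookkeeping: after the identifications $\phi_1 = \phi$ and $\theta_1 = -I$ on $\mathcal H(-1)$, the $\phi_a$- and $\theta_a$-pieces become structurally similar to the plain $\phi$-piece, and the precise constant $5 + ||\xi^\perp|| + 2n$ emerges only with careful tracking of signs across all four contributions.
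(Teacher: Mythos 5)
Your proposal is correct and follows essentially the same route as the paper: both expand the Gauss equation using $A^2=-c(1+||\xi^\perp||)\mathbb I$ on $\mathcal V$, the identifications $\phi_1=\phi$ and $\theta_1=-\mathbb I$ on $\mathcal H(-1)$, the vanishing of the $a\in\{2,3\}$ terms, and the anticommutation $A\phi=-\phi A$ on $\mathcal V$; your four contributions sum to $-2c(5+||\xi^\perp||+2n)g(\phi V_1,V_2)$ and double to the stated coefficient exactly as claimed. The only difference is organizational --- you package the trace as $\Psi(AZ,W)-\Psi(Z,AW)$, while the paper first records the full formula for $g((R(X,Y)A)Z,W)$ on $\mathcal V$ and then traces over $(e_j,\phi e_j)$.
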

\begin{proof}
We take $\xi_1,\xi_2,\xi_3$ with properties (\ref{eqn:natural-basis}).
Then under this situation,  $\phi_1 X=\phi X$ and  $\phi_bX$, $\theta_bX\in\mathcal H_1(1)$, $b\in\{2,3\}$, for $X\in\mathcal H(-1)$. 
It follows from the Gauss equation and Lemma~\ref{lem:eta-parallel-a}--\ref{lem:eta-parallel-b} that 
\begin{align*}
g((R(X,Y)A)Z,W)
=&c(3+||\xi^\perp||)\{g(Y,AZ)g(X,W)-g(X,AZ)g(Y,W)\\
  &-g( Y,Z)g(AX,W)+g(X,Z)g(AY,W)\}\\
 &+2c\{g(\phi Y,AZ)g(\phi X,W)-g(\phi X,AZ)g(\phi Y,W)\\
 &    -g(\phi Y,Z)g(A\phi X,W)+g(\phi X,Z)g(A\phi Y,W)\\
& -4g(\phi X,Y)g(\phi AZ,W)\}
\end{align*}
for any $X,Y,Z,W\in\mathcal V$. Hence the lemma can be obtained directly from the preceding equation.
\end{proof}

\begin{lemma}\label{lem:inv-subsp-1}
Let $M$ be a Hopf hypersurface in $\hat M^m(c)$. Suppose $G\subset M$ is an open set with $0<||\xi^\perp||<1$.
If $(\phi A+A\phi)\mathcal H=0$ on $G$, then $\nabla_XZ\perp \mathcal H(1)$  
for all vector fields $X,Z$ tangent to $\mathcal H(-1)$ on $G$.  
\end{lemma}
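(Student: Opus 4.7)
The plan is to convert the desired orthogonality into a projection argument driven by the operator $\theta + ||\xi^\perp||\, I$, whose eigenvalues separate $\mathcal H(-1)$ from $\mathcal H(1)$.

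First I would observe that the hypothesis $(\phi A + A\phi)\mathcal H = 0$ (indeed Lemma~\ref{lem:hopf_eigenvalue-2} alone) yields $A\mathcal H \subset \mathcal H$, so that for $X \in \mathcal H$ the vector $\phi AX$ lies in $\mathcal H$; consequently $\eta^\perp(\phi AX) = 0$ and $\eta_a(\phi AX) = 0$ for each $a$. By Lemma~\ref{lem:global}(f) this forces $X||\xi^\perp|| = 0$ on $\mathcal H$, and the same two observations reduce Lemma~\ref{lem:global}(i), for any $X, Z \in \mathcal H(-1)$, to
\[
(\nabla_X \theta) Z \;=\; -g(AX, Z)\, \phi\xi^\perp,
\]
because $\phi Z \in \mathcal H$ makes $\eta^\perp(\phi Z)$ vanish as well.

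Next, extending $Z$ as a local section of $\mathcal H(-1)$ on $G$, the defining identity $\theta Z = -||\xi^\perp||\, Z$ (Lemma~\ref{lem:H-global}) differentiates along $X$ and combines with the above to produce
\[
(\theta + ||\xi^\perp||\, I)\nabla_X Z \;=\; g(AX, Z)\, \phi\xi^\perp.
\]
By Lemma~\ref{lem:hopf_perp-1a}, which applies thanks to Lemma~\ref{lem:hopf_eigenvalue-2}, $\nabla_X Z$ already lies in $\mathcal H \oplus \mathrm{span}\{\xi, \xi^\perp, \phi\xi^\perp\}$, so only these seven directions can contribute.

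Finally, on this subspace the operator $\theta + ||\xi^\perp||\, I$ vanishes on $\mathcal H(-1)$, equals $2||\xi^\perp||\, I$ on $\mathcal H(1)$, and preserves the three-dimensional piece $\mathrm{span}\{\xi, \xi^\perp, \phi\xi^\perp\}$ by Lemma~\ref{lem:global}(a). Since $\phi\xi^\perp \in \mathcal H^\perp$ is orthogonal to $\mathcal H(1)$, projecting the displayed equation onto $\mathcal H(1)$ yields $2||\xi^\perp||\, P_{\mathcal H(1)}\nabla_X Z = 0$; as $||\xi^\perp|| > 0$, this gives $\nabla_X Z \perp \mathcal H(1)$. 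I do not foresee any serious obstacle: once $(\nabla_X \theta)Z$ is read off from Lemma~\ref{lem:global}(i), the eigenvalue separation of $\theta$ between $\mathcal H(-1)$ and $\mathcal H(1)$ does all the work, and the only thing requiring care is the verification that $\mathrm{span}\{\xi, \xi^\perp, \phi\xi^\perp\}$ is genuinely $\theta$-invariant so that no component of the right-hand side leaks into $\mathcal H(1)$.
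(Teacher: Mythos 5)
Your argument is correct, and it takes a genuinely different route from the paper's. The paper diagonalizes $A$ on $\mathcal H$, uses the hypothesis $(\phi A+A\phi)\mathcal H=0$ together with (\ref{eqn:hopf_+})--(\ref{eqn:hopf_-}) to see that the principal curvatures on $\mathcal H(-1)$, namely $\pm\sqrt{-c(1+||\xi^\perp||)}$, are distinct from those on $\mathcal H(1)$, and then kills $g(\nabla_{X_i}X_j,X_r)$ via the Codazzi equation, whose right-hand side vanishes when all arguments lie in $\mathcal H$. You instead differentiate the eigenvalue identity $\theta Z=-||\xi^\perp||Z$, read off $(\nabla_X\theta)Z=-g(AX,Z)\phi\xi^\perp$ from Lemma~\ref{lem:global}(i) (using $A\mathcal H\subset\mathcal H$ to annihilate the $\eta^\perp(\phi Z)$ and $\eta_a(\phi AX)$ terms and to get $X||\xi^\perp||=0$ from Lemma~\ref{lem:global}(f)), and arrive at $(\theta+||\xi^\perp||\,\mathbb I)\nabla_XZ=g(AX,Z)\phi\xi^\perp$; since $\theta+||\xi^\perp||\,\mathbb I$ acts as $2||\xi^\perp||$ on $\mathcal H(1)$ and the right-hand side lies in $\mathcal H^\perp$, projecting onto $\mathcal H(1)$ finishes the proof. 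Each step checks out. Two remarks. First, your appeal to Lemma~\ref{lem:hopf_perp-1a} is harmless but not needed: $\theta$ is symmetric and preserves $\mathcal H$, hence preserves all of $\mathcal H^\perp$, so no $\mathcal H^\perp$-component of $\nabla_XZ$ can leak into $\mathcal H(1)$ in any case. Second, your proof uses only $A\mathcal H(\varepsilon)\subset\mathcal H(\varepsilon)$ from Lemma~\ref{lem:hopf_eigenvalue-2} and never the hypothesis $(\phi A+A\phi)\mathcal H=0$, so you have in fact established the conclusion for an arbitrary Hopf hypersurface with $0<||\xi^\perp||<1$; what this buys is generality and the avoidance of the Codazzi equation, whereas the paper's eigenvalue-gap argument is the one that generalizes directly to the $||\xi^\perp||=1$ setting of Lemma~\ref{lem:inv-subsp-2}, where $A$ (rather than $\theta$) separates $\mathcal V$ from $\mathcal H(1)$.
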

\begin{proof}
By the hypothesis $(\phi A+A\phi)\mathcal H=0$ and Lemma~\ref{lem:hopf_eigenvalue}--\ref{lem:hopf_eigenvalue-2}, we can select local orthonormal principal vector fields 
$,X_1, X_2,\cdots,X_{4m-8}$ 
such that  
$X_j$,  $X_{m-2+j}=\phi X_j$  are tangent to $\mathcal H(-1)$
with $\lambda_j=\lambda=-\lambda_{m-2+j}$, where $\lambda=\sqrt{-c(1+||\xi^\perp||)}$, 
and $X_{2m-4+j},\cdots, X_{3m-6+j}=\phi X_{2m-4+j}$ are tangent to $\mathcal H(1)$, $j\in\{1,\cdots,m-2\}$. 
We can further deduce from  (\ref{eqn:hopf_+})--(\ref{eqn:hopf_-})  that $\lambda_r\neq\lambda$ for $r\in\{2m-3,\cdots,4m-8\}$.

Fixed $i,j\in\{1,\cdots, m-2\}$ and $r\in\{2m-3,\cdots,4m-8\}$, 
It follows from  the Codazzi equation and  Lemma~\ref{lem:global}(f)  that  
\[
g(\nabla_{X_i}X_j,X_r)=-\frac1{(\lambda_r-\lambda)}g(\nabla_{X_i}A)X_r-(\nabla_{X_r}A){X_i},{X_j})=0.
\]
We can further deduce from the preceding equation that 
\[
g(\nabla_{X_i}\phi X_j,X_r)=g((\nabla_{X_i}\phi)X_j,X_r)-g(\nabla_{X_i}X_j,\phi X_r)=0.
\]
Similarly, we have $g(\nabla_{\phi X_i}\phi X_j,X_r)=g(\nabla_{\phi X_i}X_j,X_r)=0$. 
This completes the proof.
\end{proof}

For a real hypersurface $M$ in $\hat M^m(c)$, if  $G\subset M$ is an open set with $||\xi^\perp||=1$,
then $A\mathcal H(1)=0$ (and so $(\phi A+A\phi)\mathcal H(1)=0$) on $G$ according to Lemma~\ref{lem:theta_A}.
Based on this observation, although there is a slight difference between the hypotheses, the following lemma can be obtained in a similar manner as in the proof  of Lemma~\ref{lem:inv-subsp-1}.
\begin{lemma}\label{lem:inv-subsp-2}
Let $M$ be a Hopf hypersurface in $\hat M^m(c)$. Suppose $G\subset M$ is an open set with $||\xi^\perp||=1$
and $\mathcal V\subset\mathcal H(-1)$ is a subbundle  over $G$ that is invariant under $A$ and $\phi$.
If $(\phi A+A\phi)\mathcal V=0$ on $G$, then $\nabla_XZ\perp \mathcal H(1)$  
for all vector fields $X,Z$ tangent to $\mathcal V$ on $G$.

\end{lemma}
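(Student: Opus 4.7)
The plan is to mirror the proof of Lemma~\ref{lem:inv-subsp-1}, exploiting two simplifications that come from $\|\xi^\perp\|\equiv 1$: by Lemma~\ref{lem:theta_A}(b) the subspace $\mathcal H(1)$ already lies in the kernel of $A$, and all principal eigenvalues appearing in the argument will be constant (so the $d$-of-eigenvalue terms in the Codazzi computation vanish automatically, with no need to invoke Lemma~\ref{lem:global}(f)).

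The first task is to pin down $A|_{\mathcal V}$. Since $\mathcal V\subset\mathcal H(-1)\subset\mathfrak D$, every occurrence of $\eta$, $\eta_a$, or $\eta_a\circ\phi$ vanishes on $\mathcal V$, so under the hypothesis $(\phi A+A\phi)|_{\mathcal V}=0$ the fundamental identity (\ref{eqn:AphiA}) reduces to $A\phi A=c(\phi+\phi^\perp)$ on $\mathcal V$. Writing $A\phi=-\phi A$ and then applying $\phi$, together with the identity $\phi\phi^\perp Y=-Y$ for $Y\in\mathcal H(-1)$ (which follows from $\phi_1\phi Y=\theta_1Y=-Y$ in the canonical frame (\ref{eqn:natural-basis})), gives $A^2=-2c\,\mathbb I$ on $\mathcal V$. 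Hence $A|_{\mathcal V}$ has the two constant eigenvalues $\pm\sqrt{-2c}$, and since $\mathcal V$ is $\phi$-invariant with $\phi$ swapping them, there is a local orthonormal principal frame $X_1,\ldots,X_k,\phi X_1,\ldots,\phi X_k$ spanning $\mathcal V$ with $AX_j=\lambda X_j$ for $\lambda=\sqrt{-2c}$.

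Once this spectral picture is in place, the Codazzi argument from the proof of Lemma~\ref{lem:inv-subsp-1} transfers almost verbatim. For $X,Y\in\mathcal H$ the right-hand side of the Codazzi equation lies in $\mathcal H^\perp$, so pairing against $X_j\in\mathcal V$ gives $g((\nabla_{X_i}A)X_r,X_j)=g((\nabla_{X_r}A)X_i,X_j)$ for any $X_r\in\mathcal H(1)$. Expanding, using $AX_r=0$ and the constancy of $\lambda$, collapses this to $\lambda\,g(\nabla_{X_i}X_j,X_r)=0$, hence $g(\nabla_{X_i}X_j,X_r)=0$ for $X_i,X_j$ both in the $+\lambda$-eigenspace. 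The other three cases among $\{X_i,\phi X_i\}\times\{X_j,\phi X_j\}$ follow by the standard $\phi$-trick: expand $\nabla_X\phi Y=(\nabla_X\phi)Y+\phi\nabla_X Y$ via (\ref{eqn:contact}), note that $(\nabla_X\phi)Y$ is a multiple of $\xi\perp\mathcal H(1)$, and use the $\phi$-invariance $\phi\mathcal H(1)=\mathcal H(1)$ from (\ref{eqn:Ha}) to reduce to a case already handled (the differentiator-in-first-slot case is reached by writing $X_j=-\phi(\phi X_j)$ and applying the trick inside).

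The main obstacle is really only Step~1: the hypothesis $(\phi A+A\phi)\mathcal V=0$ is genuinely weaker than the one in Lemma~\ref{lem:eta-parallel-b}, so that lemma is not directly applicable and the identity $A^2=-2c\,\mathbb I$ on $\mathcal V$ must be derived by hand from (\ref{eqn:AphiA}). Once constant nonzero eigenvalues of $A|_{\mathcal V}$ are in hand and distinct from the value $0$ on $\mathcal H(1)$, the Codazzi argument and the $\phi$-trick are a routine transcription of the proof of Lemma~\ref{lem:inv-subsp-1}.
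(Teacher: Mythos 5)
Your proposal is correct and follows exactly the route the paper intends: it omits a proof of this lemma, saying only that it "can be obtained in a similar manner as in the proof of Lemma~\ref{lem:inv-subsp-1}" despite "a slight difference between the hypotheses," and your argument is that transcription, with the Codazzi pairing against $\mathcal H(1)\subset\ker A$ and the $\phi$-trick. You also correctly identify and fill the one point that difference in hypotheses creates — deriving $A^2=-2c\,\mathbb I$ on $\mathcal V$ directly from (\ref{eqn:AphiA}) rather than citing Lemma~\ref{lem:eta-parallel-b} — so nothing is missing.
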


\begin{lemma}\label{lem:mixed-2}
Let $M$ be a Hopf  hypersurface in $\hat M^{m}(c)$, $m\geq3$.  Then $(\phi A+A\phi)\mathcal H\neq0$ on each open subset $G\subset M$ with $0<||\xi^\perp||<1$.
\end{lemma}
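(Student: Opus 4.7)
The plan is to argue by contradiction. Suppose $(\phi A + A\phi)\mathcal H = 0$ on some open $G \subset M$ with $0 < ||\xi^\perp|| < 1$. Since $M$ is Hopf, Lemma~\ref{lem:hopf_eigenvalue} applies; specializing the observation in the paragraph just before Lemma~\ref{lem:eta-parallel-a} to any principal vector $X_j \in \mathcal H(-1)$ and using (\ref{eqn:hopf_-}) yields $\lambda_j^2 = -c(1 + ||\xi^\perp||)$, forcing $c < 0$. Lemma~\ref{lem:eta-parallel-a}(a) then gives $A^2 = -c(1 + ||\xi^\perp||)\mathbb I$ on $\mathcal H(-1)$, so $A|_{\mathcal H(-1)}$ has exactly the two eigenvalues $\pm \lambda$ with $\lambda := \sqrt{-c(1 + ||\xi^\perp||)} > 0$. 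Because $\phi$ preserves $\mathcal H(-1)$ by (\ref{eqn:Ha}) and anticommutes with $A$ there, it interchanges the eigenspaces $E_{\pm\lambda}$, so $\dim E_\lambda = \dim E_{-\lambda} = p$ and $n := \dim \mathcal H(-1) = 2p$.

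The subspace $\mathcal V := \mathcal H(-1)$ is invariant under both $A$ (by Lemma~\ref{lem:hopf_eigenvalue-2}(b)) and $\phi$, so Lemma~\ref{lem:RA} applies and gives
\[
\sum_{j=1}^{n} g\bigl((R(e_j, \phi e_j) A) Z, W\bigr) = -4c\bigl(5 + ||\xi^\perp|| + 2n\bigr)\, g(\phi A Z, W)
\]
for any orthonormal basis $\{e_j\}$ of $\mathcal V$ and $Z, W \in \mathcal V$. I would pick $e_1, \dots, e_p$ unit-orthonormal in $E_\lambda$ so that $\phi e_1, \dots, \phi e_p$ is unit-orthonormal in $E_{-\lambda}$, and set $Z = e_k$ and $W = \phi e_k$. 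Then $g(\phi A Z, W) = \lambda$, making the right-hand side strictly positive.

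Next, I would compute the left-hand side a second time via the Ricci identity $(R(X, Y) \cdot A) Z = \nabla^2 A(X, Y)(Z) - \nabla^2 A(Y, X)(Z)$, where $\nabla^2 A(X, Y)(Z) = \nabla_X\bigl((\nabla_Y A) Z\bigr) - (\nabla_Y A)(\nabla_X Z) - (\nabla_{\nabla_X Y} A)(Z)$. Pairing with $W$, Lemma~\ref{lem:eta-parallel-a}(b) annihilates every term of the form $g\bigl((\nabla_Y A) Z', W'\bigr)$ with $Y, Z', W' \in \mathcal H(-1)$, and Lemma~\ref{lem:inv-subsp-1} confines $\nabla_X Z$, $\nabla_X W$, and $\nabla_X Y$ to $\mathcal H(-1) \oplus \mathcal H^\perp$. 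The sum thus reduces to residual pairings of $(\nabla_Y A) Z$ with the $\mathcal H^\perp$-components of these covariant derivatives. The goal is to show that the $e_j \leftrightarrow \phi e_j$ antisymmetrization forces these residuals to cancel, so the left-hand side vanishes, contradicting the strict positivity obtained above.

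The main obstacle is precisely this last cancellation. It requires explicit computation of $g\bigl((\nabla_Y A) Z, V\bigr)$ for $V$ in the canonical frame $\{\xi, \xi_a, \phi\xi_a\}$ of $\mathcal H^\perp$ via the Hopf equations (\ref{eqn:d_alpha})--(\ref{eqn:AphiA}), Lemma~\ref{lem:global}(f)--(j), Lemma~\ref{lem:hopf_eigenvalue-2}(c), and Lemma~\ref{lem:corvariant}, followed by verification that the $\phi$-exchange of the $\pm\lambda$-eigenspaces of $A$ pairs each residual with its sign-reversed counterpart, yielding the required cancellation after summation over $j$.
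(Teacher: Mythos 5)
Your overall architecture is the paper's: assume $(\phi A+A\phi)\mathcal H=0$, deduce $c<0$ and $A^2=-c(1+||\xi^\perp||)\mathbb I$ on $\mathcal H(-1)$, and then evaluate $\sum_j g((R(e_j,\phi e_j)A)Z,W)$ for $Z\in E_\lambda$, $W=\phi Z$ in two ways, once via the Gauss equation (Lemma~\ref{lem:RA}) and once via the Ricci identity with Lemma~\ref{lem:eta-parallel-a}(b), Lemma~\ref{lem:hopf_perp-1a} and Lemma~\ref{lem:inv-subsp-1} controlling the derivative terms. The gap is in your endgame: the residual pairings do \emph{not} cancel under the $e_j\leftrightarrow\phi e_j$ antisymmetrization, so the left-hand side does not vanish. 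The surviving contributions come exactly from the $\vspan\{\xi,\phi\xi^\perp,\phi^2\xi^\perp\}$-components of $\nabla_XZ$, and when one computes them via (\ref{eqn:d_alpha}), Lemma~\ref{lem:corvariant}, and Theorem/Lemma data on $A\phi\xi^\perp$, the sum comes out to $4c\lambda(3-||\xi^\perp||)$, which is strictly nonzero for $c<0$, $\lambda>0$, $0<||\xi^\perp||<1$. If you insist on proving cancellation you will be trying to prove something false and the argument will stall there.

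The strategy is nevertheless salvageable, and this is precisely how the paper closes the proof: the contradiction is not ``$\text{LHS}=0$ versus $\text{RHS}>0$'' but the mismatch between the two computed values. With $n=\dim\mathcal H(-1)=2m-4$ and $g(\phi AZ,\phi Z)=\lambda$, Lemma~\ref{lem:RA} gives $-4c\lambda(4m-3+||\xi^\perp||)$, and equating this with $4c\lambda(3-||\xi^\perp||)$ yields $16c\lambda m=0$, impossible. So you should replace the cancellation claim by an explicit evaluation of the residual sum; the pieces you list (Lemma~\ref{lem:global}(f)--(j), Lemma~\ref{lem:corvariant}, the expression (\ref{eqn:derivative})-type reduction of $g((\nabla_YA)\nabla_XZ,W)$ onto $\xi$, $\phi\xi^\perp$, $\phi^2\xi^\perp$) are exactly the right tools for that computation. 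Two smaller points: you need Lemma~\ref{lem:hopf_perp-1a} (not only Lemma~\ref{lem:inv-subsp-1}) to confine $\nabla_XZ$ to $\mathcal H(-1)\oplus\vspan\{\xi,\xi^\perp,\phi\xi^\perp\}$ rather than all of $\mathcal H(-1)\oplus\mathcal H^\perp$, and you should also record that $g((\nabla_{[X,Y]}A)Z,W)=0$, which again uses these confinement lemmas together with Lemma~\ref{lem:eta-parallel-a}(b) and Lemma~\ref{lem:theta}/Lemma~\ref{lem:global}(h)--(j).
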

\begin{proof}
Suppose $(\phi A+A\phi)\mathcal H=0$ on $G$. 
Then by Lemma~\ref{lem:eta-parallel-a}, we obtain 
\begin{align} \label{eqn:RA-1}
&g((R(X,Y)A)Z,W)+g((\nabla_YA)\nabla_XZ,W)+g((\nabla_YA)Z,\nabla_XW)		\nonumber\\
&+g((\nabla_{[X,Y]}A)Z,W)-g((\nabla_XA)\nabla_YZ,W)-g((\nabla_XA)Z,\nabla_YW)=0
\end{align}
for any vector fields $X,Y,Z,W$ tangent to $\mathcal H(-1)$, here  we have used the fact
\begin{align*}
(R(X,Y)A)Z=&\nabla^2A(;Y;X)Z-\nabla^2A(;X;Y)Z
\end{align*}
where 
\[
\nabla^2A(;Y;X)Z:=\nabla_X(\nabla_YA)Z-(\nabla_{\nabla_XY}A)Z-(\nabla_YA)\nabla_XZ. 
\]
By using  
Lemma~\ref{lem:global}(h)--(j),  
Lemma~\ref{lem:hopf_perp-1a},  
Lemma~\ref{lem:eta-parallel-a} and Lemma~\ref{lem:inv-subsp-1}, on one hand, we obtain
\begin{align} \label{eqn:[]}
g((\nabla_{[X,Y]}A)Z,W)=0
\end{align}
and on the other hand
\begin{align} \label{eqn:derivative}
g((\nabla_YA)  &  \nabla_XZ,W)   \nonumber\\
=&\eta(\nabla_XZ)g((\nabla_YA)\xi,W)
			+\frac{g(\nabla_XZ,\phi^2\xi^\perp)}{||\xi^\perp||^2(1-||\xi^\perp||^2)}g((\nabla_YA)\phi^2\xi^\perp,W)   \nonumber\\
	&+\frac{g(\nabla_XZ,\phi\xi^\perp)}{||\xi^\perp||^2(1-||\xi^\perp||^2)}g((\nabla_YA)\phi\xi^\perp,W) \nonumber \\
=&-g(\phi AX,Z)g((\nabla_YA)\xi,W)
			+\frac{g(\phi AX,Z)}{||\xi^\perp||(1+||\xi^\perp||)}g((\nabla_YA)\phi^2\xi^\perp,W)   \nonumber\\
	&+\frac{g(AX,Z)}{||\xi^\perp||(1+||\xi^\perp||)}g((\nabla_YA)\phi\xi^\perp,W) 
\end{align}
for any vector fields $X,Y,Z,W$ tangent to $\mathcal H(-1)$.

Let $\{e_1,\cdots,e_{2m-4}\}$ be an orthonormal basis of $\mathcal H(-1)$ and 
$Z$ be a unit vector field tangent to $\mathcal H(-1)$ such that $AZ=\lambda Z$ (and so $A\phi Z=-\lambda\phi Z$),  where $\lambda=\sqrt{-c(1+||\xi^\perp||)}$.
Then by using (\ref{eqn:RA-1})--(\ref{eqn:derivative}) and Lemma~\ref{lem:corvariant}, we obtain
\begin{align*} 
\sum^{2m-4}_{j=1}((   &  R(e_j,\phi e_j)   A)Z,\phi Z)   \nonumber\\
=&-2\lambda g((\nabla_{Z}A)\phi Z- (\nabla_{\phi Z}A)Z,\xi)		
  +\frac{2\lambda g((\nabla_{Z}A)\phi Z-(\nabla_{\phi Z}A)Z,\phi^2\xi^\perp)	}{||\xi^\perp||(1+||\xi^\perp||)} \nonumber\\
  & -2\lambda\frac{ g((\nabla_{\phi Z}A)\phi\xi^\perp,\phi Z)
					+g(\nabla_{Z}A)\phi\xi^\perp, Z)	}{||\xi^\perp||(1+||\xi^\perp||)}			\\
=&-2\lambda\{-2c(1+||\xi^\perp||)\}+2\lambda\{2c(1-||\xi^\perp||)\}-2\lambda\{-2c(1-||\xi^\perp||)\}		\\	
 =&4c\lambda(3-||\xi^\perp||).	
\end{align*}
This, together with Lemma~\ref{lem:RA}, gives $16c\lambda m=0$.
This is a contradiction  and so the proof is completed.
\end{proof}

\begin{lemma}\label{lem:mixed-3}
Let $M$ be a real hypersurface in $\hat M^{m}(c)$, $m\geq 3$.  Then $\phi A+A\phi\neq0$ on each open subset $G\subset M$.
\end{lemma}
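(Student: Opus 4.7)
The plan is to suppose, for contradiction, that $\phi A + A\phi = 0$ on an open set $G \subset M$, and then reduce to the remaining case $||\xi^\perp|| = 1$ on $G$. The continuous function $f := ||\xi^\perp||^2$ takes values in $[0,1]$, and the open subset $G \cap \{0 < f < 1\}$ is ruled out by Lemma~\ref{lem:mixed-2}, so $f$ is locally constant equal to $0$ or $1$. The value $f = 0$ on an open subset is eliminated by Lemma~\ref{lem:mixed-1}. Hence, after passing to a smaller open set, we may assume $||\xi^\perp|| = 1$, i.e.\ $\xi = \xi_1 \in \mathfrak{D}^\perp$, so that $\mathcal{H}^\perp = \mathfrak{D}^\perp$ and $\mathcal{W} = \mathcal{H}(-1) \oplus (\mathfrak{D}^\perp \ominus \mathbb{R}\xi)$.

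Under this hypothesis, $M$ is Hopf with $A\xi = \alpha\xi$, and by Lemma~\ref{lem:theta_A}(b) we automatically have $A\mathcal{H}(1) = 0$, so $(\phi A + A\phi)\mathcal{W} = 0$. Then Lemma~\ref{lem:eta-parallel-b}(a) gives $A^2 = -2cI$ on $\mathcal{W}$, which forces $c < 0$. Setting $\lambda = \sqrt{-2c}$, the shape operator $A|_{\mathcal{W}}$ has eigenvalues $\pm\lambda$ and, because $A\phi = -\phi A$, the map $\phi$ interchanges the two eigenspaces $\mathcal{W}^\pm$. Define the local subbundle
\[
\mathcal{V} := (\mathcal{H}(-1) \cap \mathcal{W}^+) \,\oplus\, \phi(\mathcal{H}(-1) \cap \mathcal{W}^+) \subset \mathcal{H}(-1),
\]
which (after shrinking $G$ so its rank is locally constant) is invariant under both $A$ and $\phi$. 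A direct dimension count gives $\dim(\mathcal{H}(-1)\cap\mathcal{W}^+) \geq m-2$, hence $\dim\mathcal{V} \geq 2(m-2) \geq 2$ when $m\geq 3$, and $A|_\mathcal{V}$ has eigenvalues $\pm\lambda$.

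With $\mathcal{V}$ in hand, Lemma~\ref{lem:inv-subsp-2} gives $\nabla_X Z \perp \mathcal{H}(1)$ and Lemma~\ref{lem:hopf_perp-1b} gives $\nabla_X Z \perp \mathfrak{D}^\perp \ominus \mathbb{R}\xi$ for all $X,Z$ tangent to $\mathcal{V}$, so $\nabla_X Z$ lies in $\mathbb{R}\xi \oplus \mathcal{H}(-1)$. The next step parallels the proof of Lemma~\ref{lem:mixed-2}: expand the Ricci identity $(R(X,Y)A)Z = \nabla^2 A(;Y;X)Z - \nabla^2 A(;X;Y)Z$ with $X = e_j$, $Y = \phi e_j$ for an orthonormal frame $\{e_j\}$ of $\mathcal{V}$, pair against $W = \phi Z$ for an eigenvector $Z$ of $A|_\mathcal{V}$, and sum over $j$. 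Lemma~\ref{lem:eta-parallel-b}(b) kills the intrinsic $(\nabla_X A)(\mathcal{W},\mathcal{W})$-terms, and the surviving terms involve only $(\nabla_Y A)\xi$, which can be evaluated via $\nabla_X \xi = \phi AX$ and the Codazzi identity. Setting the resulting expression equal to the value furnished by Lemma~\ref{lem:RA},
\[
\sum_{j} g\big((R(e_j,\phi e_j)A)Z,\phi Z\big) = -4c(6 + 2n)\lambda\,\|Z\|^2,
\]
with $n = \dim\mathcal{V}$, produces a linear equation in $n$ with nonzero coefficient $c\lambda$, which cannot be satisfied.

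The main obstacle is the Codazzi/Ricci computation in the last step. In Lemma~\ref{lem:mixed-2} the expansion of $(\nabla_Y A)\nabla_X Z$ in~(\ref{eqn:derivative}) relied on the denominator $||\xi^\perp||^2(1-||\xi^\perp||^2)$, which degenerates precisely at $||\xi^\perp|| = 1$. The replacement must instead decompose $\nabla_X Z$ directly along $\xi$ and the $\mathcal{H}(-1)$-component, using $\eta(\nabla_X Z) = -g(Z,\phi AX)$ and the fact from Lemma~\ref{lem:eta-parallel-b}(b) that $(\nabla_Y A)|_{\mathcal{H}(-1)}$ has no $\mathcal{W}$-component. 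Once this bookkeeping is done, the count of the surviving terms gives the contradiction exactly as in Lemma~\ref{lem:mixed-2}.
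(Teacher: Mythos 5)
Your proof is correct, and while the reduction to the case $\|\xi^\perp\|=1$ via Lemmas~\ref{lem:mixed-1} and~\ref{lem:mixed-2} is exactly the paper's, the endgame is a genuinely different (and arguably cleaner) route. The paper does not build $\mathcal V$ from the eigenspace decomposition of $A_{|\mathcal W}$; instead it supposes $A\xi_2$ has a nonzero $\mathcal H(-1)$-component $rU$, uses $\phi A+A\phi=0$ and $A^2=-2c\,\mathbb I$ on $\mathcal W$ to check that $\mathcal V:=\mathcal H(-1)\ominus\vspan\{U,\phi U\}$ is invariant under $A$ and $\phi$, runs the Ricci-identity versus Lemma~\ref{lem:RA} comparison on that $\mathcal V$ to force $r=0$, concludes $A\mathfrak D^\perp\subset\mathfrak D^\perp$, and only then reaches the contradiction by invoking the classification Theorems~\ref{thm:+}--\ref{thm:-2} and observing that none of the model hypersurfaces satisfies $(\phi A+A\phi)\xi_a=0$. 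Your version short-circuits all of this: the intrinsic side of the computation equals $8c\,g(\phi AZ,\phi Z)$ independently of $n=\dim\mathcal V$ (the sums over the frame of $\mathcal V$ only ever see the projections of $Z$, $\phi Z$ and $A\phi Z$, which already lie in $\mathcal V$), while Lemma~\ref{lem:RA} gives $-4c(6+2n)g(\phi AZ,\phi Z)$, so one obtains $4c\lambda(2n+8)=0$ for \emph{any} nonzero $A$- and $\phi$-invariant subbundle of $\mathcal H(-1)$; your $\mathcal V=(\mathcal H(-1)\cap\mathcal W^+)\oplus\phi(\mathcal H(-1)\cap\mathcal W^+)$ has dimension at least $2(m-2)\geq 2$ (using $\dim\mathcal H(-1)=2m-2$ and $\dim\mathcal W^\pm=m$ when $\xi\in\mathfrak D^\perp$), so the contradiction is immediate and the classification theorems are never needed. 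The supporting lemmas (\ref{lem:theta_A}, \ref{lem:eta-parallel-b}, \ref{lem:hopf_perp-1b}, \ref{lem:inv-subsp-2}, \ref{lem:RA}) all apply verbatim to your $\mathcal V$, and the only extra care required is the local-constancy of $\operatorname{rank}(\mathcal H(-1)\cap\mathcal W^+)$, which you already address by shrinking $G$; what your approach buys is independence from the model-space tables, at the cost of a slightly more delicate construction of the invariant subbundle.
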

\begin{proof}
Suppose $\phi A+A\phi=0$ on an open subset $G\subset M$. 
By virtue of Lemma~\ref{lem:mixed-1} and Lemma~\ref{lem:mixed-2},  $||\xi^\perp||=1$ on $G$ or $\xi=\xi^\perp\in\mathfrak D^\perp$.
We consider the vectors $\xi_1,\xi_2,\xi_3$ with properties (\ref{eqn:natural-basis})--(\ref{eqn:natural-basis2}).

We first prove that $A\mathfrak D^\perp\subset \mathfrak D^\perp$. It suffices to show that $A\xi_2$, $A\xi_3\in\mathfrak D^\perp$.
Suppose $A\xi_2=p\xi_2+q\xi_3+rU$, where $U$ is a unit vector field tangent to $\mathcal H(-1)$ and $r$ is a non-vanishing function on $G$.
Then by the hypothesis $\phi A+A\phi =0$ and Lemma~\ref{lem:eta-parallel-b}, we can obtain
\begin{align*}
A\xi_3	=&q\xi_2-p\xi_3+r\phi U\\
AU				=&r\xi_2-pU-q\phi U\\
A\phi U=&r\xi_3-qU+p\phi U.
\end{align*}
It follows that $\mathcal V:=\mathcal H(-1)\ominus\vspan\{U,\phi U\}$ is invariant under $A$ and $\phi$.
By Lemma~\ref{lem:eta-parallel-b}, we obtain
\begin{align*}
&g((R(X,Y)A)Z,W)+g((\nabla_YA)\nabla_XZ,W)+g((\nabla_YA)Z,\nabla_XW)		\nonumber\\
&+g((\nabla_{[X,Y]}A)Z,W)-g((\nabla_XA)\nabla_YZ,W)-g((\nabla_XA)Z,\nabla_YW)=0
\end{align*}
for any vector fields $X,Y,Z,W$ tangent to $\mathcal V$. 
On the other hand,  by Lemma~\ref{lem:eta-parallel-b} and Lemma~\ref{lem:inv-subsp-2}, we obtain
\begin{align*} 
g((\nabla_YA)\nabla_XZ,W) =&\eta(\nabla_XZ)g((\nabla_YA)\xi,W)		\nonumber\\
=&-g(\phi AX,Z)g(\alpha\phi AY-2c\phi Y,W)
\end{align*}
for any vector fields $X,Y,Z,W$ tangent to $\mathcal V$.
By using these two equations, we obtain
\begin{align*}
\sum^{2m-4}_{j=1}((R(e_j,\phi e_j)A)Z,W)-8cg(\phi AZ,W)=0, \quad Z,W\in\mathcal V
\end{align*}
where $\{e_1,\cdots,e_{2m-4}\}$ is an orthonormal basis for $\mathcal V$.
This, together with Lemma~\ref{lem:RA}, gives
\[
-16cmg(\phi AZ,W)=0, \quad Z,W\in\mathcal V.
\]
This contradicts the fact that $A$ is an isomorphism on $\mathcal V$. Hence $A\xi_2\in\mathfrak D^\perp$ and so
$A\xi_3=\phi A\xi_2\in\mathfrak D^\perp$. Accordingly,  $A\mathfrak D^\perp\subset \mathfrak D^\perp$.
It follows that  $G$ is an open part of one of the real hypersurfaces 
stated  in Theorem~\ref{thm:+} and Theorem~\ref{thm:-}.
However, the fact $(\phi A+A\phi)\xi_a=0$ prevents $M$ from being any one of the cases in these two theorems in light of  Theorem~\ref{thm:+2} and Theorem~\ref{thm:-2}; it is a contradiction and so the proof is completed.
\end{proof}

\begin{proof}[Proof of Theorem~\ref{thm:mixed}] It is an immediate consequence of Lemma~\ref{lem:mixed-3}.
\end{proof}

By using  Theorem~\ref{thm:mixed}, we obtain the following general properties of Hopf hypersurfaces.
\begin{theorem}\label{thm:hopf_eigenvalue-3}
Let $M$ be a Hopf hypersurface in $\hat M^m(c)$, $m\geq3$.  Then  
\begin{enumerate}
\item[(a)] $\xi\alpha=0$;   $d\alpha=-4c\eta^\perp\phi$,
\item[(b)]  $\alpha$ is constant if and only if either $\xi\in\mathfrak D$ or $\xi\in\mathfrak D^\perp$,
\item[(c)] $A\xi^\perp=\alpha\xi^\perp$,  $A\phi^2\xi^\perp=\alpha\phi^2\xi^\perp$,
\item[(d)] $\alpha A\phi\xi^\perp=(\alpha^2+4c-4c||\xi^\perp||^2)\phi\xi^\perp$.
\end{enumerate}
\end{theorem}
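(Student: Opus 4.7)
The plan is to establish $\xi\alpha=0$ first; parts (b), (c), (d) then follow quickly. Indeed, once (a) holds, Lemma~\ref{lem:hopf_eigenvalue-1} yields (c) and (d) directly, while (\ref{eqn:d_alpha}) gives $d\alpha=-4c\eta^\perp\phi$. For (b), $\alpha$ is constant iff $\eta^\perp\phi=0$, equivalently $\phi\xi^\perp=0$; since $\ker\phi=\mathbb R\xi$ and $\xi^\perp\in\mathfrak D^\perp$, this forces either $\xi^\perp=0$ (i.e., $\xi\in\mathfrak D$) or $\xi^\perp=\xi$ (i.e., $\xi\in\mathfrak D^\perp$).

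To prove $\xi\alpha=0$, I argue by contradiction. Assume the open set $U=\{x\in M:(\xi\alpha)(x)\neq 0\}$ is nonempty. The main tool is the identity (\ref{eqn:210}), tested against well-chosen pairs of tangent vectors, together with the nonexistence of mixed foliate hypersurfaces (Theorem~\ref{thm:mixed}, equivalently Lemma~\ref{lem:mixed-3}). Consider first an open subset of $U$ on which $\xi^\perp=0$: here $\eta^\perp=0$, so the first and third terms of (\ref{eqn:210}) vanish, leaving $(\xi\alpha)d\eta=0$; using $d\eta(X,Y)=g((\phi A+A\phi)X,Y)$ this forces $\phi A+A\phi=0$, contradicting Lemma~\ref{lem:mixed-3}. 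The case $||\xi^\perp||=1$ on an open subset is analogous: there $\xi=\xi^\perp$ makes $\eta^\perp=\eta$, so $\eta^\perp A=\alpha\eta^\perp$ and $\eta^\perp\phi=\eta\phi=0$ (because $\phi\xi=0$); again (\ref{eqn:210}) collapses to $(\xi\alpha)d\eta=0$ and yields the same contradiction.

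It remains to rule out an open subset $G\subset U$ on which $0<||\xi^\perp||<1$. On $G$ I substitute $X,Y\in\mathcal H$ into (\ref{eqn:210}). The first term vanishes since $\eta|_{\mathcal H}=0$, and in the expansion of $d(\eta^\perp\phi)$ given by (\ref{eqn:201}) the $\sum_a$ term vanishes because $\eta_a(\phi X)=\eta_a(\phi Y)=0$ for $X,Y\in\mathcal H$ (as $\phi\mathcal H=\mathcal H$ is orthogonal to every $\xi_a$). This reduces (\ref{eqn:210}) to
\[
(\xi\alpha)\,g((\phi A+A\phi)X,Y)+4c\,g((\theta A-A\theta)X,Y)=0,\qquad X,Y\in\mathcal H.
\]
By Lemma~\ref{lem:hopf_eigenvalue-2}, $A\mathcal H(\pm)\subset\mathcal H(\pm)$, and since $\theta$ acts as the scalar $\pm||\xi^\perp||$ on $\mathcal H(\pm)$, the second term vanishes identically. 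Moreover $A\mathcal H\subset\mathcal H$ together with $\phi\mathcal H=\mathcal H$ implies $(\phi A+A\phi)\mathcal H\subset\mathcal H$. Combined with $\xi\alpha\neq 0$ on $G$, these force $(\phi A+A\phi)\mathcal H=0$, contradicting Lemma~\ref{lem:mixed-2}.

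The delicate step is the last one: passing from a bilinear identity valid only on $\mathcal H\times\mathcal H$ to the honest operator equation $(\phi A+A\phi)\mathcal H=0$. Lemma~\ref{lem:hopf_eigenvalue-2} plays a double role here, both annihilating the $(\theta A-A\theta)$ correction and guaranteeing that $(\phi A+A\phi)$ preserves $\mathcal H$, so that vanishing of the $\mathcal H$-component is equivalent to vanishing of the whole vector; this is what ultimately lets Lemma~\ref{lem:mixed-2} close the argument.
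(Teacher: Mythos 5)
Your proof is correct and follows essentially the same route as the paper: split into the cases $\xi^\perp=0$, $0<\|\xi^\perp\|<1$, $\|\xi^\perp\|=1$, reduce (\ref{eqn:210}) via (\ref{eqn:201}) and Lemma~\ref{lem:hopf_eigenvalue-2} to $(\xi\alpha)(\phi A+A\phi)|_{\mathcal H}=0$, and invoke the nonexistence of mixed foliate hypersurfaces (Lemmas~\ref{lem:mixed-1}--\ref{lem:mixed-3}), after which (b)--(d) follow from (\ref{eqn:d_alpha}) and Lemma~\ref{lem:hopf_eigenvalue-1}. The only (harmless) deviation is in the $\xi^\perp=0$ subcase, where the paper instead quotes the classification of Hopf hypersurfaces with $\xi\in\mathfrak D$ (Theorem~\ref{thm:lee-suh}) to get $\alpha$ constant, while you reuse the mixed-foliate argument uniformly; your spelling-out of why the bilinear identity on $\mathcal H\times\mathcal H$ upgrades to $(\phi A+A\phi)\mathcal H=0$ is exactly the step the paper leaves implicit.
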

\begin{proof}
(a)  In  each open subset $G\subset M$ with $||\xi^\perp||=0$,  $\theta=\phi^\perp=0$ and so 
$\xi\in\mathfrak D$. 
Then one of the cases in Theorem~\ref{thm:lee-suh} occur,  and so $\alpha$ is constant on $G$; this gives $\xi\alpha=0$ on $G$.
Next,  for each $x\in M$ with  $0<||\xi^\perp||<1$,  
it follows from  (\ref{eqn:201}),  (\ref{eqn:210})  
and   Lemma~\ref{lem:hopf_eigenvalue-2} that 
\[
(\xi\alpha)g((\phi A+A\phi)X,Y)=(\xi\alpha)d\eta(X,Y)=0, \quad X,Y\in\mathcal H.
\]
Hence, we  obtain $\xi\alpha=0$ at $x$ by Lemma~\ref{lem:mixed-2}.
Now consider an open subset $G\subset M$ with  $||\xi^\perp||=1$.  Then $\eta=\eta^\perp$ and so
(\ref{eqn:210}) descends to  $(\xi\alpha)d\eta=0$.
It follows from Lemma~\ref{lem:mixed-3} that $\xi\alpha=0$ on $G$. 
Consequently, we conclude that  $\xi\alpha=0$ on $M$. Next by (\ref{eqn:d_alpha}), we obtain $d\alpha=-4c\eta^\perp\phi$.

\medskip
(b)--(d) These can be obtained immediately  from (a) and Lemma~\ref{lem:hopf_eigenvalue-1}.

\end{proof}

With the help of Theorem~\ref{thm:hopf_eigenvalue-3}, we can  complete the classfication problem of contact real hypersurfaces in $\hat M^m(c)$, $c<0$, considered by Berndt, Lee and Suh  in \cite{berndt-lee-suh}. 

Recall that a real hypersurface $M$ in a K\"ahler manifold is said to be \emph{contact} if  $\phi A+A\phi=\rho\phi$ for a nowhere zero function $\rho$ on $M$. This means that the almost contact metric structure $(\phi,\xi,\eta,g)$ of $M$ is contact up to a $\mathcal C$-homothetic deformation. 
By using Theorem~\ref{thm:hopf_eigenvalue-3} and \cite[Theorem 1.1]{berndt-lee-suh}, we obtain the following result.

\begin{theorem}\label{thm:contact}
Let $M$ be a real hypersurface in $SU_{2,m}/S(U_2U_m)$, $m\geq 3$. Then $M$ is contact if and only if it is an open part of one of  real hypersurfaces of type $B$ or $C_2$ given  in Theorem~\ref{thm:-}.
\end{theorem}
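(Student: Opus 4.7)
The plan is to reduce the classification to \cite[Theorem 1.1]{berndt-lee-suh} by using Theorem~\ref{thm:hopf_eigenvalue-3} to upgrade the structural information available on any contact real hypersurface. The essential content for the ``only if'' direction is a two-step argument: every contact real hypersurface is Hopf, and its Reeb principal curvature $\alpha$ satisfies $\xi\alpha=0$. Once these are in place, \cite[Theorem 1.1]{berndt-lee-suh} closes out the classification, while a direct check verifies that types $B$ and $C_2$ are indeed contact.

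For the ``only if'' direction I would first apply the contact identity $\phi A+A\phi=\rho\phi$ to $\xi$. Since $\phi\xi=0$, this collapses to $\phi A\xi=0$, so $A\xi\in\ker\phi=\mathbb R\xi$ and $M$ is Hopf with $A\xi=\alpha\xi$. Invoking Theorem~\ref{thm:hopf_eigenvalue-3}(a) then delivers $\xi\alpha=0$, which is precisely the hypothesis under which \cite[Theorem 1.1]{berndt-lee-suh} is formulated. That partial classification then forces $M$ to be an open part of a real hypersurface of type $B$ or $C_2$ from Theorem~\ref{thm:-}.

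For the converse I would verify the contact equation directly on each of the two candidate hypersurfaces using the principal-curvature data of Theorem~\ref{thm:-2}. In both cases $\xi\in\mathfrak D$, so $\phi\mathfrak D^\perp\subset\mathfrak J\xi$ and $\phi(\mathfrak J\xi)\subset\mathfrak D^\perp$, and $\mathcal H$ is $\phi$-invariant. For type $B$, the relation $JT_\lambda=T_\mu$ gives $\phi T_\lambda=T_\mu$, and a short computation yields $(\phi A+A\phi)X=\beta\phi X$ for all $X\perp\xi$ once one uses the identity $\coth u+\tanh u=2\coth(2u)$ to recognize $\lambda+\mu=\beta$. For type $C_2$ all the relevant eigenvalues collapse and the analogous check produces the contact identity with $\rho=\alpha=2\lambda$.

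The heart of the argument is the reduction in the ``only if'' direction, and that reduction rests entirely on Theorem~\ref{thm:hopf_eigenvalue-3}(a). This in turn is the nontrivial payoff of the mixed-foliate nonexistence result (Theorem~\ref{thm:mixed}); without it one cannot a priori rule out a contact Hopf hypersurface with $0<||\xi^\perp||<1$ and $\xi\alpha\neq0$, which would lie outside the scope of \cite[Theorem 1.1]{berndt-lee-suh}. The converse computation, while conceptually routine, does require some care in tracking how $\phi$ moves among the eigenspaces of $A$, especially the $J$-swap between $T_\lambda$ and $T_\mu$ inside $\mathcal H$ for type $B$.
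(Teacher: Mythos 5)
Your proposal is correct and follows essentially the same route as the paper: observe that the contact condition forces $M$ to be Hopf, apply Theorem~\ref{thm:hopf_eigenvalue-3}(a) to get $\xi\alpha=0$, and then invoke \cite[Theorem 1.1]{berndt-lee-suh}, with a direct eigenvalue check for the converse. The paper states this in a single sentence; your version simply fills in the (correct) details, including the $\lambda+\mu=\beta$ computation for type $B$.
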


\begin{remark}
 Theorem~\ref{thm:contact} was obtained in \cite{suh3} for the case $c>0$.
\end{remark}

\section{$q$-umbilical real hypersurfaces in $\hat M^m(c)$}
Recall that a real hypersurfaces $M$ in $\hat M^m(c)$ is said to be \emph{$q$-umbilical} if it satisfies
\begin{align*}
A= f_1\mathbb I+f_2\theta +f_3\sum^3_{a=1}\xi_a\otimes\eta_a
\end{align*}
where $f_1$, $f_2$, $f_3$  are functions on $M$.
This class of real hypersurfaces is interesting to be studied as it includes three important types of real hypersurfaces.
We can easily obtain the following from Theorem~\ref{thm:+2} and Theorem~\ref{thm:-2}.

\begin{lemma}\label{lem:umbilic-eq}
\begin{enumerate}\item[]
\item[(i)] Real hypersurfaces of type $A$ in $SU_{m+2}/S(U_2U_m)$ are $q$-umbilical with 
	\[
	f_1=-f_2=-\frac{\sqrt{2c}\tan(\sqrt{2c}r)}2, \quad f_3=\sqrt{2c}\cot(\sqrt{2c}r), \quad  0<r<\frac{\pi}{\sqrt{8c}}, \quad c>0.
	\]
\item[(ii)] Real hypersurfaces of type $A$ in $SU_{2,m}/S(U_2U_m)$ are $q$-umbilical with 
	\[
	f_1=-f_2=\frac{\sqrt{-2c}\tanh(\sqrt{-2c}r)}2, \quad f_3=\sqrt{-2c}\coth(\sqrt{-2c}r), \quad r>0, \quad c<0.
	\]
	\item[(iii)] Real hypersurfaces of type $C_1$ in $SU_{2,m}/S(U_2U_m)$ are $q$-umbilical with 
	\[
	f_1=-f_2=\frac{\sqrt{-2c}}2, \quad f_3=\sqrt{-2c}, \quad c<0.
	\]
\end{enumerate}
\end{lemma}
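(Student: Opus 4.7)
The plan is to verify the $q$-umbilical identity eigenspace by eigenspace, using the explicit principal curvature data from Theorem~\ref{thm:+2} and Theorem~\ref{thm:-2}. In all three cases one has $\xi\in\mathfrak D^\perp$, so $||\xi^\perp||=1$; after choosing the canonical local basis $\{J_1,J_2,J_3\}$ of $\mathfrak J$ via (\ref{eqn:natural-basis})--(\ref{eqn:natural-basis2}), I may assume $\xi_1=\xi=\xi^\perp$, whence $\theta=\theta_1$ and the endomorphism $P:=\sum_{a=1}^3\xi_a\otimes\eta_a$ is the orthogonal projection onto $\mathfrak D^\perp$.

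First I would evaluate $\theta$ and $P$ on each summand of the decomposition
\begin{equation*}
TM=\mathbb R\xi\oplus(\mathfrak D^\perp\ominus\mathbb R\xi)\oplus\mathcal H(-1)\oplus\mathcal H(1).
\end{equation*}
Lemma~\ref{lem:theta}(c) gives $\theta\xi=-\xi$; Lemma~\ref{lem:theta}(d) together with (\ref{eqn:natural-basis2}) shows $\theta\xi_2=\xi_2$ and $\theta\xi_3=\xi_3$; and on $\mathcal H(\pm 1)$ the operator $\theta$ acts as $\pm\mathbb I$ by Lemma~\ref{lem:H-global}. Since $P$ is projection onto $\mathfrak D^\perp$, it equals $\mathbb I$ on $\mathbb R\xi\oplus(\mathfrak D^\perp\ominus\mathbb R\xi)$ and vanishes on $\mathcal H$. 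Thus the ansatz $A=f_1\mathbb I+f_2\theta+f_3 P$ is equivalent to the four scalar equations
\begin{equation*}
f_1-f_2+f_3=\alpha,\quad f_1+f_2+f_3=\beta,\quad f_1-f_2=\lambda,\quad f_1+f_2=\mu,
\end{equation*}
where $\alpha,\beta,\lambda,\mu$ are the respective principal curvatures of $A$ on the four summands. In the type $C_1$ case one has $\lambda=\beta$, reflecting the fact that $\mathcal H(-1)\oplus(\mathfrak D^\perp\ominus\mathbb R\xi)$ is a single eigenspace of $A$.

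Since this is an overdetermined $4\times 3$ linear system, $q$-umbilicity forces the consistency relation $\alpha-\beta=\lambda-\mu$, and once this holds the three functions $f_1,f_2,f_3$ are uniquely determined. In each case $\mu=0$, so the consistency reduces to $\alpha=\lambda+\beta$, which amounts to the trigonometric identity $\cot t-\tan t=2\cot(2t)$ for the compact type $A$ (with $t=\sqrt{2c}\,r$), to $\tanh t+\coth t=2\coth(2t)$ for the noncompact type $A$ (with $t=\sqrt{-2c}\,r$), and to the evident equality $\tfrac12\sqrt{-2c}+\sqrt{-2c}=\sqrt{-2c}+\tfrac12\sqrt{-2c}$ for type $C_1$. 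Solving the resulting linear system then yields exactly the triples $(f_1,f_2,f_3)$ stated in the lemma. I do not anticipate any nontrivial obstacle beyond these routine verifications.
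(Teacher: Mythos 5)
Your verification is correct and is exactly the computation the paper has in mind (the paper offers no written proof, simply asserting the lemma follows "easily" from Theorems~\ref{thm:+2} and \ref{thm:-2}). The eigenspace-by-eigenspace evaluation of $\theta$ and $\sum_a\xi_a\otimes\eta_a$, the resulting overdetermined linear system, and the double-angle identities $\cot t-\tan t=2\cot 2t$ and $\tanh t+\coth t=2\coth 2t$ all check out.
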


We shall consider a more general condition than $q$-umbilicity to classify $q$-umbilical real hypersurfaces as well as to obtain a nonexistence result.
\begin{theorem}\label{thm:umbilic}
Let $M$ be a connected real hypersurface in $\hat M^m(c)$, $m\geq 3$. Suppose $M$ satisfies 
\begin{align}\label{eqn:umbilic}
A= f_1\mathbb I+f_2\theta +f_3\sum^3_{a=1}\xi_a\otimes\eta_a+f_4\xi\otimes\eta
\end{align}
where $f_1$, $f_2$, $f_3$, $f_4$  are functions on $M$. Then $f_4=0$, that is, $M$ is $q$-umbilical. Furthermore, one of the following holds: 
\begin{enumerate}
\item[(i)]  for $c>0$: $M$ is an open part of a real hypersurface of type $A$ given in Theorem~\ref{thm:+};  or
\item[(ii)] for $c<0$: $M$ is an open part of one of real hypersurfaces of type $A$ or $C_1$ given in Theorem~\ref{thm:-}.
\end{enumerate}
\end{theorem}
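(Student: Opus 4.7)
The strategy is to reduce to Theorem~\ref{thm:A_A0}, which identifies connected real hypersurfaces with $A\mathfrak{D}\subset\mathfrak{D}$ and $\xi\in\mathfrak{D}^\perp$ as exactly the model spaces of the conclusion; once that is in place, substituting the explicit principal curvatures of Theorems~\ref{thm:+2} and \ref{thm:-2} into \eqref{eqn:umbilic} forces $f_4=0$ and recovers the values in Lemma~\ref{lem:umbilic-eq}. The first step is to unpack \eqref{eqn:umbilic} on distinguished directions. Using Lemma~\ref{lem:theta} and Lemma~\ref{lem:global} one computes
\begin{align*}
A\xi&=(f_1+f_4)\xi+(f_3-f_2)\xi^\perp,\\
A\xi^\perp&=(f_1+f_3)\xi^\perp+(f_4-f_2)\|\xi^\perp\|^2\xi,\\
A\phi\xi^\perp&=\bigl(f_1+f_2\|\xi^\perp\|^2\bigr)\phi\xi^\perp.
\end{align*}
In particular, whenever $\phi\xi^\perp\ne0$ it is a principal direction with eigenvalue $\omega=f_1+f_2\|\xi^\perp\|^2$, so Lemma~\ref{lem:f} is available.

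I would then argue according to the value of $\|\xi^\perp\|$. If $\xi^\perp=0$ on an open set $G$, then $\theta$ vanishes there, so \eqref{eqn:umbilic} collapses to $A=f_1\mathbb{I}+f_3\sum_a\xi_a\otimes\eta_a+f_4\xi\otimes\eta$; this forces $A|_{\mathfrak{D}\ominus\mathbb{R}\xi}=f_1\mathbb{I}$. But $M|_G$ is Hopf with $\xi\in\mathfrak{D}$, so by Theorem~\ref{thm:lee-suh} it must be of type $B$ when $c>0$ or of one of types $B$, $C_2$, $D$ when $c<0$; and each of Theorems~\ref{thm:+2}, \ref{thm:-2} and \ref{thm:-} exhibits at least two distinct principal curvatures on $\mathfrak{D}\ominus\mathbb{R}\xi$, a contradiction. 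If $\|\xi^\perp\|=1$ on $G$, I choose the canonical basis \eqref{eqn:natural-basis}--\eqref{eqn:natural-basis2} with $\xi_1=\xi$; then $\mathfrak{D}=\mathcal{H}$, both $\sum_a\xi_a\otimes\eta_a$ and $\xi\otimes\eta$ vanish on $\mathcal{H}$, and a direct check using Lemma~\ref{lem:theta} shows $\theta\mathcal{H}\subset\mathcal{H}$, so \eqref{eqn:umbilic} gives $A\mathfrak{D}\subset\mathfrak{D}$; a similar verification yields $A\mathfrak{D}^\perp\subset\mathfrak{D}^\perp$, and Theorem~\ref{thm:A_A0} applies.

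The main obstacle is the intermediate case $0<\|\xi^\perp\|<1$ on an open set $G$, where $\xi$ is neither in $\mathfrak{D}$ nor in $\mathfrak{D}^\perp$, and none of the preservation relations above is automatic. My plan is to combine Lemma~\ref{lem:f}, which yields $\omega\,d(\eta^\perp\phi)=0$ and an explicit formula for $d\omega$, with the Codazzi equation tested on pairs drawn from $\{\xi,\xi^\perp,\phi\xi^\perp\}$ and from the $\theta_{|\mathcal{H}}$-eigenspaces $\mathcal{H}(\pm1)$. The covariant derivatives of $\theta$, $\sum_a\xi_a\otimes\eta_a$ and $\xi\otimes\eta$ have closed forms from Lemma~\ref{lem:global}(f)--(j), so $(\nabla_X A)Y$ can be expanded explicitly from \eqref{eqn:umbilic}. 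I expect the resulting identities to force $f_2=f_3$, whereupon the first display above gives $A\xi=(f_1+f_4)\xi$, that is, $M|_G$ is Hopf. But then Theorem~\ref{thm:hopf_eigenvalue-3}(a)--(b) together with $\alpha=f_1+f_4$ and \eqref{eqn:d_alpha} gives a contradiction, since $d\alpha=-4c\eta^\perp\phi\ne0$ on $G$ while the algebraic structure of $f_1+f_4$ read off from \eqref{eqn:umbilic} is incompatible with this. Executing the Codazzi computation cleanly is where I expect the bulk of the work.

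Excluding the two non-generic cases, continuity of $\|\xi^\perp\|^2$ forces $\|\xi^\perp\|\equiv1$ on $M$, so $\xi\in\mathfrak{D}^\perp$ and $A\mathfrak{D}\subset\mathfrak{D}$. Theorem~\ref{thm:A_A0} then identifies $M$ with an open part of a type $A$ hypersurface (when $c>0$) or a type $A$ or $C_1$ hypersurface (when $c<0$). Evaluating \eqref{eqn:umbilic} on the orthogonal decomposition $\mathbb{R}\xi\oplus(\mathfrak{D}^\perp\ominus\mathbb{R}\xi)\oplus\mathcal{H}(1)\oplus\mathcal{H}(-1)$ yields a $4\times4$ linear system in $(f_1,f_2,f_3,f_4)$; substituting the explicit principal curvatures from Theorems~\ref{thm:+2} and \ref{thm:-2} and solving gives $f_4=0$ together with the values in Lemma~\ref{lem:umbilic-eq}.
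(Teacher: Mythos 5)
Your overall skeleton matches the paper's: compute $A$ on $\xi$, $\xi^\perp$, $\phi\xi^\perp$ and $\mathcal H(\pm1)$, kill the cases $\xi^\perp=0$ and $0<\|\xi^\perp\|<1$, and reduce the surviving case $\|\xi^\perp\|=1$ to Theorem~\ref{thm:A_A0}. The two boundary cases are handled correctly (your route through Theorem~\ref{thm:lee-suh} in the case $\xi^\perp=0$ is a legitimate variant of the paper's argument, which instead uses $A\phi\xi_a=0$ to force $f_1=0$ and then contradicts the model classification). The final determination of $f_4=0$ from the explicit principal curvatures is also fine and is exactly what Lemma~\ref{lem:umbilic-eq} records.

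The genuine gap is the case $0<\|\xi^\perp\|<1$, which you explicitly leave as a plan rather than a proof, and the plan as stated would not succeed. Testing the Codazzi equation on $X,Y\in\mathcal H(\varepsilon)$ against $\xi$ and against $\xi^\perp$ does \emph{not} force $f_2=f_3$; it yields
\begin{align*}
f_2f_3\|\xi^\perp\|^2-f_1f_4=c=f_2f_4-f_1f_3,\qquad f_2f_4\|\xi^\perp\|^2-f_1f_3=c=f_2f_3-f_1f_4,
\end{align*}
from which one gets $f_2=0$, $f_3=f_4$ and $f_1f_3=-c\neq0$. Consequently $A\xi=(f_1+f_3)\xi+f_3\xi^\perp$ with $f_3\neq0$, so $M$ is \emph{not} Hopf on this set, and your intended appeal to Theorem~\ref{thm:hopf_eigenvalue-3} never becomes available. (Even if it did, "$d\alpha=-4c\eta^\perp\phi\neq0$ is incompatible with the algebraic structure of $f_1+f_4$" is not an argument: $f_1+f_4$ is just a function and nothing in \eqref{eqn:umbilic} constrains its differential a priori.) The contradiction has to come from a different mechanism: since $f_1\neq0$, the eigenvalue $\omega=f_1+f_2\|\xi^\perp\|^2=f_1$ of $A$ on $\phi\xi^\perp$ is nonzero, so Lemma~\ref{lem:f}(b) gives $d(\eta^\perp\phi)=0$; feeding this into \eqref{eqn:201} and evaluating the resulting identity $(\theta A-A\theta)\xi_b=2\sum_a\{g(A\xi_b,\phi\xi_a)\phi\xi_a-g(\phi\xi_a,\xi_b)A\phi\xi_a\}$ with \eqref{eqn:umbilic2} produces $f_3$ times a nontrivial combination of $\xi_b,\xi_{b\pm1},\phi\xi_{b\pm1}$ whose vanishing forces $\eta_b(\xi)=0$ for all $b$, contradicting $\|\xi^\perp\|>0$. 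Without this (or an equivalent) computation, the middle case --- and hence the theorem --- is not established.
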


\begin{proof}
For each $x\in M$ with $||\xi^\perp||>0$, by (\ref{eqn:umbilic}), we obtain
\begin{align} \label{eqn:umbilic2}
\left.\begin{aligned}
AX=&(f_1+\varepsilon f_2||\xi^\perp||)X , \quad X\in\mathcal H(\varepsilon),~ \varepsilon\in\{1,2\}\\
A\xi=&(f_1+f_4)\xi+(f_3-f_2)\xi^\perp\\
A\xi^\perp=&(f_4-f_2)||\xi^\perp||^2\xi+(f_1+f_3)\xi^\perp\\
A\phi\xi^\perp=&(f_1+f_2||\xi^\perp||^2)\phi\xi^\perp.
\end{aligned}\right\}
\end{align}
Consider an open subset $G\subset M$ with  $0<||\xi^\perp||<1$.
Write $\lambda_\varepsilon=f_1+\varepsilon f_2||\xi^\perp||$ and  let $X$, $Y\in\mathcal H(\varepsilon)$. 
Then we have 
\begin{align}\label{eqn:8.3}
\nabla_X\xi=\lambda_\varepsilon \phi X, \quad 
\nabla_X\xi^\perp=\phi^\perp AX=-\varepsilon||\xi^\perp||\lambda_\varepsilon\phi X,
\end{align}
where we have used the fact $\phi^\perp X=-\varepsilon||\xi^\perp||\phi X$.
Hence, by (\ref{eqn:umbilic2})--(\ref{eqn:8.3}), we obtain
\begin{align*}
g((\nabla_XA)\xi,Y)
=&\lambda_\varepsilon (f_4-\varepsilon f_3||\xi^\perp||)g(\phi X,Y)
\end{align*}
It follows from the preceding equation and the Codazzi equation  that 
\begin{align*}
2\lambda_\varepsilon (f_4-\varepsilon f_3||\xi^\perp||)g(\phi X,Y)
		=&g((\nabla_XA)Y-(\nabla_YA)X,\xi) \nonumber\\
		=&-2c(1-\varepsilon||\xi^\perp||)g(\phi X,Y),	
\end{align*}
which gives 
\[
f_1f_4-f_2f_3||\xi^\perp||^2+c=\varepsilon||\xi^\perp||\{f_1f_3-f_2f_4+c\}, \quad \varepsilon\in\{1,-1\}
\]
and so
\begin{align}\label{eqn:402}
f_2f_3||\xi^\perp||^2-f_1f_4=c=f_2f_4-f_1f_3.
\end{align}
Similarly, we  compute
\begin{align*}
2\lambda_\varepsilon ||\xi^\perp||(f_4||\xi^\perp||-\varepsilon f_3)g(\phi X,Y)
=&g((\nabla_XA)\xi^\perp,Y)-g((\nabla_YA)\xi^\perp,X)\\
=&-2c||\xi^\perp||(||\xi^\perp||-\varepsilon)g(\phi X,Y)	
\end{align*}
to obtain
\begin{align}\label{eqn:412}
f_2f_4||\xi^\perp||^2-f_1f_3=c=f_2f_3-f_1f_4.
\end{align}
We can deduce from (\ref{eqn:402})--(\ref{eqn:412}) that 
\begin{align*}
f_2=0, \quad f_3=f_4, \quad f_1f_3=-c.
\end{align*}
It follows that  $f_1+f_2||\xi^\perp||^2=f_1\neq0$.
For $b\in\{1,2,3\}$, by using  Lemma~\ref{lem:f},  (\ref{eqn:201})  and (\ref{eqn:umbilic2}), we obtain
\begin{align*}
0=&(\theta A-A\theta)\xi_b-2\sum^3_{a=1}\{g(A\xi_b,\phi\xi_a)\phi\xi_a-g(\phi\xi_a,\xi_b)A\phi\xi_a\}  \\
=&f_3\Big\{\big(\eta_{b+2}(\xi)^2+\eta_{b+1}(\xi)^2\big)\xi_b -\eta_b(\xi)\eta_{b+1}(\xi)\xi_{b+1}
				-\eta_b(\xi)\eta_{b+2}(\xi)\xi_{b+2}\\
&-\eta_{b+2}(\xi)\phi\xi_{b+1}+\eta_{b+1}(\xi)\phi\xi_{b+2}\Big\}.
\end{align*}
Since $f_3\neq0$ and $\{\xi_1,\xi_2,\xi_3,\phi\xi_1,\phi\xi_2,\phi\xi_3\}$ is linearly independent, 
we have  
$\eta_b(\xi)=0$ for $b\in\{1,2,3\}$. Hence $G$ must be empty and so either $||\xi^\perp||=0$ or $||\xi^\perp||=1$ everywhere.

We first consider $\xi^\perp=0$.  Then $\theta=0$ and $\eta_a(\xi)=0$, $a\in\{1,2,3\}$ in this case. 
Hence by Lemma~\ref{lem:Aphixi_a}, we obtain $A\phi\xi_a=0$ and so $f_1=0$ by virtue of (\ref{eqn:umbilic}).
We can then obtain $A\mathcal H=0$,  $A\xi=f_4\xi$ and $A\xi_a=f_3\xi_a$ for $a\in\{1,2,3\}$ further on.
However, by Theorem~\ref{thm:+}--\ref{thm:-2}, we see that such a real hypersurface does not exist.
Consequently, this case cannot occur.

Finally, suppose that $||\xi^\perp||=1$, which means that $\xi=\xi^\perp\in\mathfrak D^\perp$.
This, together with (\ref{eqn:umbilic}), gives  $A\mathfrak D^\perp\subset\mathfrak D^\perp$.
Then by Theorem~\ref{thm:A_A0}, $M$ is an open part of one of the spaces listed in the theorem.
Furthermore it follows from Lemma~\ref{lem:umbilic-eq} that  $f_4=0$.  
\end{proof}

Recall that a real hypersurface $M$ in a K\"ahler manifold is said to be \emph{$\eta$-umbilical} if  it satisfies
\[
A=u\mathbb I+v\xi\otimes\eta
\]
for some functions $u,v$ on $M$.
By Theorem~\ref{thm:umbilic}, we immediately obtain
\begin{corollary}
There does not exist any $\eta$-umbilical real hypersurface $M$ in $\hat M^{m}(c)$, $m\geq 3$.
\end{corollary}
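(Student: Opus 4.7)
The plan is to recognize the $\eta$-umbilical condition as a degenerate special case of the hypothesis of Theorem~\ref{thm:umbilic}. Specifically, if $A=u\mathbb I+v\xi\otimes\eta$, then $A$ can be written in the form
\[
A=f_1\mathbb I+f_2\theta+f_3\sum^3_{a=1}\xi_a\otimes\eta_a+f_4\xi\otimes\eta
\]
by choosing $f_1=u$, $f_2=f_3=0$, $f_4=v$. Thus the hypothesis of Theorem~\ref{thm:umbilic} is satisfied, and the classification therein applies.

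From Theorem~\ref{thm:umbilic} we then conclude simultaneously that $f_4=v=0$ and that $M$ is an open part of a real hypersurface of type $A$ (for either sign of $c$) or of type $C_1$ (when $c<0$). The vanishing $v=0$ already reduces the $\eta$-umbilical form to $A=u\mathbb I$, i.e.\ $M$ is totally umbilical. To obtain the contradiction I would invoke the explicit principal curvatures recorded in Theorems~\ref{thm:+2} and~\ref{thm:-2}: every real hypersurface of type $A$ or $C_1$ carries at least two (in fact three or four) \emph{distinct} constant principal curvatures, so its shape operator cannot be a scalar multiple of the identity. Alternatively, one can appeal directly to Lemma~\ref{lem:umbilic-eq}, which shows that the $q$-umbilical coefficients for all type $A$ and type $C_1$ hypersurfaces satisfy $f_2\neq 0$ and $f_3\neq 0$, incompatible with our choice $f_2=f_3=0$.

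There is essentially no obstacle here: the content of the corollary is already absorbed into Theorem~\ref{thm:umbilic}, and the argument amounts to observing that the $\eta$-umbilical ansatz is too restrictive to be compatible with the rigid principal-curvature structure of the surviving model spaces. The only minor subtlety is that the representation in Theorem~\ref{thm:umbilic} is not a priori unique, so one must argue the contradiction through the geometry of the classified spaces rather than through a bare comparison of coefficients; either the nonexistence of totally umbilical real hypersurfaces (noted in the introduction as a consequence of the Codazzi equation) or the explicit spectra in Theorems~\ref{thm:+2} and~\ref{thm:-2} closes the argument cleanly.
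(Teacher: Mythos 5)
Your proposal is correct and is exactly the paper's intended argument: the paper derives the corollary simply by observing that the $\eta$-umbilical condition is the special case $f_2=f_3=0$ of the hypothesis (\ref{eqn:umbilic}) of Theorem~\ref{thm:umbilic}, whose conclusion (via Lemma~\ref{lem:umbilic-eq} and the spectra in Theorems~\ref{thm:+2} and~\ref{thm:-2}) is incompatible with that choice. Your extra care about non-uniqueness of the representation, resolved by checking $A$ against the distinct principal curvatures on $\mathcal H$ of the surviving model spaces, is a sound way to close the gap the paper leaves implicit.
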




\begin{thebibliography}{99}

\bibitem{berndt}
J. Berndt,
\emph{Riemannian geometry of complex two-plane Grassmannians},
{Rend. Semin. Mat. Univ. Politec. Torino}  \textbf{55}(1997), 19--83.

\bibitem{berndt-lee-suh}
J. Berndt, H. Lee and  Y.J. Suh, 
\emph{Contact hypersurfaces in noncompact complex Grassmannians of rank two}, 
{Int. J. Math.} \textbf{24}(2013)  1350089, 11 pp.

\bibitem{berndt-suh}
J. Berndt and  Y.J. Suh, 
\emph{Real hypersurfaces in complex two-plane Grassmannians}, 
{Monatsh. Math.} \textbf{127}(1999), 1--14.

\bibitem{berndt-suh2}
J. Berndt and  Y.J. Suh, 
\emph{Hypersurfaces in noncompact complex Grassmannians of rank two}, 
{Int. J. Math.} \textbf{23}(2012) 1250103, 35 pp.


\bibitem{chen1}
B.Y. Chen, 
\emph{CR-submanifolds of a Kaehler manifold  II}, 
J. Diff. Geom.  \textbf{16}(1981), 493--509.



\bibitem{jeong}
I. Jeong, C. Machado, J.D. P\'erez   and  Y.J. Suh, 
\emph{Real hypersurfaces in complex two-plane Grassmannians with $\mathfrak D^\perp$-parallel structure Jacobi operator}, 
{Int. J. Math.} \textbf{22}(2011), 655--673.



\bibitem{ki-suh}
U.H. Ki and Y.J. Suh, 
\emph{On real hypersurfaces of a complex space form}, 
Math. J. Okayama Univ. \textbf{32}(1990),  207--221.


\bibitem{lee-loo}
R.H. Lee and  T.H. Loo,
\emph{Real hypersurfaces of type $A$ in complex two-plane Grassmannians related to the Reeb vector field},
AIP Conf. Proc. \textbf{1682}(2015) 040005, 8 pp. 


\bibitem{lee-suh}
H. Lee and  Y.J. Suh,
\emph{Real hypersurfaces of type $B$ in complex two-plane Grassmannians related to the Reeb vector},
{Bull. Korean Math. Soc.}  \textbf{47}(2010), 551--561.

\bibitem{looth}
T.H. Loo,
\emph{Semi-parallel real hypersurfaces in complex two-plane Grassmannians},
{Differ. Geom. Appl.}  \textbf{34}(2014), 87--102.







\bibitem{suh1}
Y.J. Suh,
\emph{Pseudo-Einstein real hypersurfaces in complex two-plane Grassmannians},
{Bull. Aust. Math. Soc.}  \textbf{73}(2006), 183--200.

\bibitem{suh2}
Y.J. Suh,
\emph{Real hypersurfaces in complex hyperbolic two-plane Grassmannians with Reeb vector field},
{Adv. Appl. Math.} \textbf{55}(2014),  131--145.

\bibitem{suh3}
Y.J. Suh,
\emph{Real hypersurfaces of type B in complex two-plane Grassmannians},
{Monatsh. Math.} \textbf{147}(2006), 337--355.








\end{thebibliography}
\end{document}